\documentclass[11pt]{amsart}

\usepackage[T1]{fontenc}
\usepackage[a4paper,top=4cm,bottom=4cm,inner=3cm,outer=3cm]{geometry}
\usepackage[pdftex]{graphicx,graphics} 
\usepackage{amsmath,amssymb,amsfonts,color,hyperref} 
 \usepackage{epsfig,array}

\usepackage{graphicx,psfrag,mathrsfs}
\usepackage{tikz}  
\usepackage{slashbox}
\usetikzlibrary{shapes.geometric, arrows,snakes}

%


\hypersetup{
    linktoc=page,
   linkcolor=red,          
  citecolor=blue,        
    filecolor=blue,      
   urlcolor=cyan,
    colorlinks=true           
}

\usepackage[refpage]{nomencl}

\newcommand{\lan}{{\mathcal L}}
\newcommand\var{\text{var}}

\newcommand\vl{\underline{v}}
\newcommand\wl{\underline{w}}
\newcommand\dl{{\underline{\dim}_{\rm loc}}}
\newcommand\du{{\overline{\dim}_{\rm loc}}}

\newcommand\El{\underline{E}}
\newcommand\supp{\mathrm{supp}}

\RequirePackage{yhmath} 
\renewcommand\widering[1]{\ring{#1}}
\newcommand\R{{\Bbb R}}
\newcommand\N {{\Bbb N}}

\newtheorem{theorem}{Theorem}[section]

\newtheorem{lemma}[theorem]{Lemma}
\newtheorem{proposition}{Proposition}

\theoremstyle{definition}
\newtheorem{definition}[theorem]{Definition}
\newtheorem{remark}{Remark}

\newtheorem{fact}{Fact}

\title[Multifractal of random weak Gibbs measures] 
{Multifractal analysis of random weak Gibbs measures}

\author[Zhihui YUAN]{}

\subjclass{Primary:37D35; Secondary:37C45,28A78.}
\keywords{Multifractal analysis, Hausdorff and packing dimension, local dimension, random dynamical attractor, random weak Gibbs measure.}

\email{yzhh@hust.edu.cn}
\begin{document}
\maketitle

\centerline{\scshape Zhihui Yuan}
\medskip

\begin{abstract}
	We describe the multifractal nature of random weak Gibbs measures on some class of attractors associated with $C^1$ random dynamics semi-conjugate  to a random subshift of finite type. This includes the validity of the multifractal formalism, the calculation of Hausdorff and packing dimensions of the so-called level sets of divergent points,  and a $0$-$\infty$ law for the Hausdorff and packing measures of the level sets of the local dimension.
\end{abstract}

\section{Introduction}

Weak Gibbs measures are conformal probability measures obtained  as eigenvectors of Ruelle-Perron-Frobenius operators associated with continuous potentials  on topological dynamical systems.  When the system $(X,f)$ has nice enough geometric properties, for instance in the case of a conformal repeller, these measures  provide natural, and now standard examples of measures obeying the multifractal formalism: their Hausdorff spectrum  and  $L^q$-spectrum form a Legendre pair.

Specifically, for such a measure $\mu$ on $(X,f)$, the (lower) $L^q$-spectrum $\tau_{\mu}:\mathbb{R}\rightarrow\mathbb{R}\cup\{-\infty\}$ is defined by
\begin{equation}
\tau_{\mu}(q)=\liminf_{r\rightarrow 0}\frac{\log \sup\{\sum_{i}(\mu(B_{i}))^q\}}{\log (r)},
\end{equation}
where the supremum is taken over all families of disjoint closed balls $B_i$ of radius $r$ with centers in $\text{supp}(\mu)$; the Hausdorff spectrum of $\mu$ is defined by
$$
d\in\mathbb{R} \mapsto \dim_H E(\mu,d),
$$
where $\dim_H$ stands for the Hausdorff dimension,
$$E(\mu,d)=\left \{x\in \mathrm{supp}(\mu):\dim_{\rm loc}(\mu,x)=d\right\},$$
with
$$
\displaystyle \dim_{\rm loc}(\mu,x)=\lim_{r\to 0^+}\frac{\log(\mu(B(x,r)))}{\log(r)},
$$
and we have the duality relation
$$
\dim_H E(\mu,d)=\tau_\mu^*(d):=\inf_{q\in\mathbb{R}}\{dq-\tau_\mu(q)\}, \quad \forall d\in \mathbb{R},
$$
a negative dimension  meaning that the set is empty. In fact, due to the super and submultiplicativity properties associated with $\mu$, the same equality holds if we replace the limit by a $\liminf$ or a $\limsup$ in the definition of the local dimension.

The rigorous study of these measures started with the Gibbs measures case, which  corresponds to H\"older continuous potentials, or continuous potentials  possessing the so-called bounded distorsions property, and in particular on the so-called "cookie-cutter" Cantor sets associated with a $C^{1+\alpha}$ expanding map $f$ on the line \cite{CLP1,Rand} (see \cite{PW1} for an extended discussion of dimension theory and multifractal analysis for hyperbolic conformal dynamical systems). This followed seminal works by physicists of turbulence and statistical mechanics pointing the accuracy of multifractals to statistically and geometrically describe the local behavior of functions and measures \cite{FrischParisi,Halsey}. In the case of Gibbs measures, the $L^q$-spectrum of the Gibbs measure is differentiable, and analytic if the potential $\phi$ is H\"older continuous; it is the unique solution $t$ of the equation $P(q\phi+t\log \|Df\|)=0$, where $P(\cdot)$ stands for the topological pressure. The general case of continuous potentials was solved later in \cite{Olivier,FFW,Kes,FLW}, with the same formula for the $L^q$-spectrum. These progress then led to the multifractal analysis of  Bernoulli convolutions associated with Pisot numbers \cite{FO,Feng3}. Thermodynamic formalism and large deviations are central tool in these studies. 

In the context of random dynamical systems, the multifractal analysis of random Gibbs measures (to be defined below) associated with random H\"older continuous potentials on attractors of random $C^{1+\alpha}$ expanding (or expanding in the mean) random conformal dynamics encoded by random subshifs of finite type has been studied in \cite{Kifer2}, \cite{FS} and  \cite{MSU}.  These works, as well as the dimension theory of attractors of random dynamics \cite{BG1,Kifer2,Kifer3,MSU}, are based on the thermodynamic formalism for random transforms \cite{Kifer1,Bog,BG1,Gundlach,KK,DG1,KL,DKS1,MSU,DKS2}. The multifractal analysis of random weak Gibbs measures is also implicitly considered in \cite{FS} (which deals with the multifractal analysis of Birkhoff averages), but the fibers are deterministic, and the techniques developed there seems difficult to adapt in a simple way in the case of random subshifts.

In this paper we consider, on a base probability space $(\Omega,\mathcal F,\mathbb{P},\sigma)$, random weak Gibbs measures on some class of attractors included in $[0,1]$ and associated with $C^1$ random dynamics semi-conjugate (up to countably many points), or conjugate, to a random subshift of finite type. We provide a study of the multifractal nature of these measures, including the validity of the multifractal formalism, the calculation of Hausdorff and packing dimensions of the so-called level sets of divergent points,  and a $0$-$\infty$ law for the Hausdorff and packing measures of the level sets of the local dimension.  Compared to the above mentioned works, apart the source of new difficulties coming from the relaxation of the regularity properties of the potentials, our assumptions provide a more general process of construction of the random Cantor set in terms of the distribution of the random family of intervals used to refine the construction at a given step: it can contain contiguous intervals (i.e. without gap in between, and even no gap) with positive probability; thus, for instance, it covers the natural families of Cantor sets one can obtain by picking at random a fiber in a Bedford-McMullen carpet. Extensions of our results to the higher dimensional case will be discussed in Remark~\ref{higherdim}. We focus on the one dimensional case because our model will be used in a companion paper to study the multifractal nature of discrete measures obtained as ``inverse'' of the random weak Gibbs measures considered here. 

Section 2 developes background about random dynamical systems and thermodynamic formalism, and presents our main results, namely  theorems~\ref{multifractal initial} and ~\ref{multifractal initial2}, as well as concrete examples of random attractors. Section 3  provides the basic properties that will used in the proof of this theorem in Section 4.

\section{Setting and main result}\label{section:Setting and main results}
We first need to expose basic facts from random dynamical systems and thermodynamic formalism.

\subsection{Random subshift, relativized entropy, topological pressure  and random weak Gibbs measures}\label{subsection: Random subshift, relativized entropy, topological pressure  and weak Gibbs measures}
{\bf Random subshift.} Let $(\Omega,\mathcal{F},\mathbb{P})$ be a complete probability space and $\sigma$  a $\mathbb{P}$-preserving  ergodic map. Let $l$ be a $\mathbb{Z}^+$ valued random variable (r.v.) such that
$
\int\log (l)\, \mathrm{d}\mathbb{P}<\infty$ and $ \mathbb{P}(\{\omega\in\Omega, l(\omega)\geq 2\})>0.
$
Let $A=\{A(\omega)=(A_{r,s}(\omega)):\omega\in\Omega\}$ be  a random transition matrix such that $A(\omega)$ is a $l(\omega)\times l(\sigma\omega)$-matrix with entries 0 or 1. We  suppose that the map $\omega\mapsto A_{r,s} (\omega)$ is measurable for all $(r,s)\in \mathbb{Z}^+\times \mathbb{Z}^+$ and each $A(\omega)$  has at least one non-zero entry in each row and each column.
Let $\Sigma_{\omega}=\{\vl=v_0v_1\cdots;1\leq v_k\leq l(\sigma^{k}(\omega))\text{ and } A_{v_k,v_{k+1}}(\sigma^{k}(\omega))=1\ \text{for all } k\in\mathbb{N} \},$
and $F_{\omega}:\Sigma_{\omega}\rightarrow\Sigma_{\sigma\omega}$ be the left shift $(F_{\omega}\vl)_i=v_{i+1}$ for any $\vl=v_0v_1\cdots\in \Sigma_{\omega}$. Define $\Sigma_{\Omega}=\{(\omega,\vl):\omega\in\Omega, \vl\in\Sigma_\omega\}$ and the map $\Pi:\Sigma_{\Omega}\rightarrow\Omega$ as $\Pi(\omega,\vl)=\omega$.
Define the map $F:\Sigma_{\Omega}\to \Sigma_{\Omega}$  as $F((\omega,\vl))=(\sigma\omega,F_{\omega}\vl)$. The corresponding family $\tilde{F}=\{F_{\omega}:\omega\in \Omega\}$ is called a random subshift. We assume that this  random subshift  is topologically mixing, i.e.  there exists a $\mathbb{Z}^+$-valued r.v. $M$ on $(\Omega,\mathcal{F},\mathbb{P})$ such that for $\mathbb{P}$-almost every (a.e.) $\omega$,
$
A(\omega)A(\sigma\omega)\cdots A(\sigma^{M-1}\omega)$ is positive. 

\medskip

For each $n\ge 1$, define $\Sigma_{\omega,n}$ as the set of words $v=v_0v_1\cdots v_{n-1}$ of length $n$, also denoted $|v|$, such that 
$1\leq v_k\leq l(\sigma^{k}(\omega))$ for all  $0\leq k\leq n-1$
and $A_{v_k,v_{k+1}}(\sigma^{k}(\omega))=1$ for all $ 0\leq k\leq n-2$. For $v=v_0v_1\cdots v_{n-1}\in\Sigma_{\omega,n}$,  we define the cylinder $[v]_{\omega}$ as
$[v]_{\omega}:=\{\wl\in\Sigma_\omega:w_i=v_i\text{ for }i=0,\dots,n-1\}.$
For any $s\in \Sigma_{\omega,1}$, $n\geq M(\omega)$ and $s'\in \Sigma_{\sigma^n\omega,1}$, there is  at least one word $v(s,s')\in \Sigma_{\sigma\omega,n-2}$ such that $sv(s,s')s'\in \Sigma_{\omega,n}$. For each such connection, we fix one such $v(s,s')$ and denote the word $sv(s,s')s'$ by $s\ast s'$.

\medskip

\noindent{\bf Relativized entropy.} Using the same notations as in \cite{Kifer1,Kifer4,KL}, let
$$\mathcal{P}_{\mathbb{P}}(\Sigma_\Omega)=\{\rho, \text{ probability measure on }\Sigma_\Omega:\Pi_*\rho=\rho\circ\Pi^{-1}=\mathbb{P}\},$$
and
$$\mathcal{I}_{\mathbb{P}}(\Sigma_\Omega)=\{\rho\in\mathcal{P}_{\mathbb{P}}(\Sigma_\Omega):\rho \text{ is } F\text{-invariant}\}.$$

Any $\rho\in \mathcal{P}_{\mathbb{P}}(\Sigma_\Omega) $ on $\Sigma_{\Omega}$ disintegrates in $\mathrm{d}\rho(\omega,\vl)=\mathrm{d}\rho_{\omega}(\vl)\mathrm{d}\mathbb{P}(\omega)$ where the measures $\rho_{\omega},\ \omega\in\Omega$, are regular conditional probabilities with respect to the $\sigma$-algebra $\pi_\Omega^{-1}(\mathcal F)$, where $\pi_\Omega$ is the canonical projection from $\Sigma_\Omega$ to $\Omega$.  This implies that  for $\mathbb{P}$-almost every $\omega$, for any measurable set $R\subset \Sigma_{\Omega}$, $\rho_{\omega}(R(\omega))=\rho(R|\pi_\Omega^{-1}(\mathcal F))$, where
$R(\omega)=\{x:(\omega,\vl)\in R\}$.

Let $\mathcal{R}=\{R_i\}$ be a finite or countable partition of $\Sigma_{\Omega}$ into measurable sets. Then for all $\omega\in\Omega$,  $\mathcal{R}(\omega)=\{R_i(\omega):\ R_i(\omega)=\{x\in \Sigma_{\omega}:\ (\omega,x)\in R_i \}\}$ is a partition of~$\Sigma_{\omega}$. 

Given $\rho\in \mathcal{P}_{\mathbb{P}}(\Sigma_\Omega)$, the
conditional entropy of $\mathcal{R}$  given $\pi_\Omega^{-1}(\mathcal F)$ is defined by
\begin{eqnarray*}
	H_{\rho}(\mathcal{R}|\pi_\Omega^{-1}(\mathcal F))&=&-\int \sum_{i} \rho(R_i|\pi_\Omega^{-1}(\mathcal F))\log (\rho(R_i|\pi_\Omega^{-1}(\mathcal F)))d \mathbb{P}\\
	&=&\int H_{\rho_{\omega}}(\mathcal{R}(\omega))d \mathbb{P}(\omega)
\end{eqnarray*}
where $H_{\rho_{\omega}}(\mathcal{A})$ denotes the usual entropy of a partition $\mathcal{A}$.

Now, given a finite or countable partition $\mathcal{Q}$ of $\Sigma_\Omega$, define the fiber entropy of $F$, also called  the relative entropy of $F$ with respect to  $\mathcal{Q}$, as
$$h_{\rho}(F,\mathcal{Q})=\lim_{n\to\infty}\frac{1}{n} H_{\rho}(\vee_{i=0}^{n-1}F^{-i}\mathcal{Q}|\pi_\Omega^{-1}(\mathcal F))$$
(here $\vee$ denotes the join of partitions).

Then define
$$h_\rho(F)=\sup_{\mathcal{Q}} h_{\rho}(F,\mathcal{Q}),$$
where the supremum is taken over all finite or
countable measurable partitions $\mathcal{Q}=\{Q_i\}$ of $\Sigma_{\Omega}$ with finite conditional entropy, that is 
$h_{\rho}(F,\mathcal{Q})<+\infty$. In our setting, we have $h_\rho(F)\le \int \log(l)\,\mathrm{d}\mathbb{P}$.  The number $h_\rho(F)$, also denoted $h(\rho|\mathbb{P})$ in the literature, is the relativized entropy of $F$ given $\rho$. It is   also called the fiber entropy of the bundle random dynamics~$F$.

\medskip

\noindent{\bf Topological pressure and random weak Gibbs measures.} We say that a measurable function $\Phi$ on $\Sigma_{\Omega}$ is in $\mathbb{L}^1_{\Sigma_{\Omega}}(\Omega,C(\Sigma))$ if
\begin{enumerate}
	\item \begin{equation}\label{int}
	C_{\Phi}=:\int_{\Omega} \|\Phi(\omega)\|_{\infty}\, \mathrm{d}\mathbb{P}(\omega)<\infty,
	\end{equation}
	where
	$
	\|\Phi(\omega)\|_{\infty}=:\sup_{\underline{v}\in \Sigma_{\omega}}|\Phi(\omega,\vl)|$,
	\item 
	For $\mathbb{P}$-a. e. $\omega$,  $\text{var}_n\Phi(\omega)\to 0$ as $n\to\infty$,
	where 
	$$
	\text{var}_n\Phi(\omega) = \sup\{|\Phi(\omega,\vl)-\Phi(\omega,\wl)| : v_i = w_i, \forall i < n\}.
	$$
\end{enumerate}

Now, if $\Phi\in \mathbb{L}^1_{\Sigma_{\Omega}}(\Omega,C(\Sigma))$, due to Kingsman's subadditive ergodic theorem, 
$$\lim_{n\to\infty}\frac{1}{n}\log \sum_{v\in\Sigma_{\omega,n}}\sup_{\underline v\in[v]_\omega}\exp\left(S_n\Phi(\omega,\underline v)\right )$$ 
exists for $\mathbb{P}$-a.e.  $\omega$ and does not depend on $\omega$, where $S_n\Phi(\omega,\underline v)=\sum_{i=0}^{n-1}\Phi(F^i(\omega,\underline v))$.  It is called {\it topological pressure of $\Phi$} and is denoted $P(\Phi)$. 

Also, with $\Phi$ is associated 
the Ruelle-Perron-Frobenius  operator $\lan_{\Phi}^\omega: C^0(\Sigma_{\omega})\to C^0(\Sigma_{\sigma\omega})$ defined as
$$\lan_{\Phi}^\omega h(\vl)=\sum_{F_{\omega}\wl=\vl}\exp(\Phi(\omega,\wl))h(\wl),\ \ \forall\ \vl\in \Sigma_{\sigma\omega}.$$ 
\begin{proposition}\label{eigen}\cite{Kifer1,MSU}
	Removing from $\Omega$ a set of $\mathbb{P}$-probability 0 if necessary, for all  $\omega\in \Omega$ there exists $\lambda(\omega)>0$ and  a probability measure $\widetilde \mu_{\omega}$ on $\Sigma_{\omega}$ such that $(\mathcal{L}_{\Phi}^\omega)^*\widetilde \mu_{\sigma\omega}=\lambda(\omega)\widetilde \mu_{\omega}$.
\end{proposition}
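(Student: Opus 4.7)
The plan is to produce $\widetilde\mu_\omega$ and $\lambda(\omega)$ jointly, as a random family, via a Schauder--Tychonoff fixed-point argument on the bundle of probability measures $\mathcal P_{\mathbb P}(\Sigma_\Omega)$. First I would note that this space is convex and, equipped with the narrow topology of random probability measures (as in Crauel), compact. Any element disintegrates as $\mathrm d\nu(\omega,\vl)=\mathrm d\nu_\omega(\vl)\,\mathrm d\mathbb P(\omega)$, so it is enough to construct $\widetilde\mu\in\mathcal P_{\mathbb P}(\Sigma_\Omega)$ satisfying the eigen-relation fiber-wise, then read off $\widetilde\mu_\omega$ from the disintegration.

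Next I would define $T:\mathcal P_{\mathbb P}(\Sigma_\Omega)\to\mathcal P_{\mathbb P}(\Sigma_\Omega)$ fiber-wise by
$$
(T\nu)_\omega \;=\; \frac{(\lan_\Phi^\omega)^*\nu_{\sigma\omega}}{Z_\nu(\omega)}, \qquad Z_\nu(\omega)\;=\;\int_{\Sigma_\omega}\!\lan_\Phi^\omega\mathbf 1\,\mathrm d\nu_{\sigma\omega}.
$$
The key points to verify are that $Z_\nu(\omega)$ is measurable, finite and bounded below away from $0$ (the sum defining $\lan_\Phi^\omega\mathbf 1$ has at least one term by the condition on $A(\omega)$, and $\Phi(\omega,\cdot)$ is bounded thanks to \eqref{int}); that $\omega\mapsto(T\nu)_\omega$ is weakly measurable (which follows by a standard measurable-selection argument using that $\omega\mapsto\Sigma_\omega$ is a measurable bundle and $\Phi$ is jointly measurable); and that $T$ is continuous in the narrow topology. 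Continuity is where the hypothesis $\text{var}_n\Phi(\omega)\to0$ enters: it yields fiber-wise continuity of $\lan_\Phi^\omega$ on $C(\Sigma_\omega)$, while the integrability \eqref{int} provides the uniform control needed to pass to the limit against bounded Carathéodory test functions on $\Sigma_\Omega$.

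Once continuity is established, the Schauder--Tychonoff theorem produces a fixed point $\widetilde\mu$ of $T$. Setting $\lambda(\omega):=Z_{\widetilde\mu}(\omega)$ and $\widetilde\mu_\omega$ equal to the corresponding disintegrating measure, the fixed-point equation becomes exactly $(\lan_\Phi^\omega)^*\widetilde\mu_{\sigma\omega}=\lambda(\omega)\widetilde\mu_\omega$ for $\mathbb P$-a.e.\ $\omega$, with $\lambda(\omega)>0$ by positivity of $\lan_\Phi^\omega\mathbf 1$. Removing a $\mathbb P$-null set where measurability or positivity fails gives the stated conclusion.

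The main obstacle is the continuity of $T$: because we only assume continuity of $\Phi$ on fibers (not H\"older regularity), one cannot invoke a contraction argument or a cone/projective-metric argument to get uniqueness and exponential convergence as in the classical Ruelle--Perron--Frobenius setting; one is forced to rely on a soft compactness-plus-continuity argument, and the delicate point is justifying that narrow convergence of random measures $\nu^{(k)}\to\nu$ passes through the fiber-wise normalization $Z_{\nu}(\omega)$, uniformly enough in $\omega$ to preserve narrow convergence after normalization. This is the technical heart underlying the cited results of \cite{Kifer1,MSU}, and it is what I would carry out in detail.
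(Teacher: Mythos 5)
The paper does not actually prove this proposition: it is imported verbatim from \cite{Kifer1,MSU}, so your attempt can only be measured against the constructions in those references and against whether it is correct on its own terms. It is not. The fatal point is exactly the one you defer as ``the technical heart'': the fiber-wise normalized dual operator $T$, with $(T\nu)_\omega=(\mathcal L_\Phi^\omega)^*\nu_{\sigma\omega}/Z_\nu(\omega)$, is \emph{not} continuous for Crauel's narrow topology on $\mathcal P_{\mathbb P}(\Sigma_\Omega)$, and no amount of care with Carath\'eodory test functions repairs this. Narrow convergence $\nu^{(k)}\to\nu$ only controls $\omega$-averaged integrals; it gives no pointwise-in-$\omega$ control of the normalizers $Z_{\nu^{(k)}}(\omega)$, while $T$ is nonlinear fiber by fiber because of the division by $Z_\nu(\omega)$. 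Concretely, fix two points $\underline a,\underline b$ in the fibers and set $\nu^{(k)}_\omega=\delta_{\underline a}$ for $\omega\in A_k$ and $\delta_{\underline b}$ otherwise, where $\mathbf 1_{A_k}\to\tfrac12$ in the weak-$*$ topology of $L^\infty(\mathbb P)$. Then $\nu^{(k)}\to\nu$ narrowly with $\nu_\omega=\tfrac12(\delta_{\underline a}+\delta_{\underline b})$, but for a test function $f$ the limit of $\int f\,\mathrm d(T\nu^{(k)})$ is the $\mathbb P$-integral of the \emph{average of the two normalized ratios}, whereas $\int f\,\mathrm d(T\nu)$ is the $\mathbb P$-integral of the \emph{ratio of the averages}; these disagree as soon as $\mathcal L_\Phi^\omega\mathbf 1$ is non-constant on $\Sigma_{\sigma\omega}$. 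Passing to the topology of almost-everywhere fiber-wise weak-$*$ convergence would restore continuity but destroys compactness, and random fixed point theorems do not apply either, since the eigen-equation couples the fibers over $\omega$ and $\sigma\omega$ rather than being a fiber-wise fixed point equation. So the Schauder--Tychonoff strategy, as formulated, cannot be completed.

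The cited references avoid this trap by never normalizing a limit fiber by fiber. For random H\"older potentials one first produces the eigenfunction $h(\omega)$ and eigenvalue $\lambda(\omega)$ from a cone-contraction (Birkhoff projective metric) argument applied to $\mathcal L_\Phi^{\sigma^{-n}\omega,n}\mathbf 1$, then normalizes the operator so that $\hat{\mathcal L}_\Phi^\omega\mathbf 1=\mathbf 1$, and obtains $\widetilde\mu_\omega$ as the limit of $(\hat{\mathcal L}_\Phi^{\omega,n})^*m_{\sigma^n\omega}$ for arbitrary reference probability measures $m$: these pullbacks are already probability measures, the exponential contraction gives convergence and independence of $m$, and measurability in $\omega$ comes for free. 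For the merely continuous potentials $\Phi\in\mathbb L^1_{\Sigma_\Omega}(\Omega,C(\Sigma))$ that this paper actually feeds into the proposition, none of this machinery applies directly, and the existence of $(\lambda(\omega),\widetilde\mu_\omega)$ has to be obtained by exploiting the regularity of approximating potentials (for instance the H\"older approximations $\Phi_i$ of Section~3.3.1) together with a limiting argument in which the eigen-relation is recovered only at the end. You have correctly located where the difficulty sits, but the resolution you propose (soft compactness plus continuity) is the one route that provably does not go through.
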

We call the  family $\{\widetilde \mu_{\omega}:\omega\in\Omega\}$ a random weak Gibbs measure on $\{\Sigma_\omega:\omega\in\Omega\}$ associated with $\Phi$.

\subsection{A model of random dynamical attractor}

For any $\omega\in\Omega$, let $U_{\omega}^1=[a_{\omega,1},b_{\omega,1}],U_{\omega}^2=[a_{\omega,2},b_{\omega,2}],\cdots U_{\omega}^{s}=[a_{\omega,s},b_{\omega,s}]\cdots$ be closed non trivial intervals with disjoint interiors and $b_{\omega,s}\le a_{\omega,s+1}$. We assume that for each $s\ge 1$, $\omega\mapsto (a_{\omega,s},b_{\omega,s})$ is measurable, as well as  $a_{\omega,1}\geq 0$ and $b_{\omega,l(\omega)}\leq 1$.  Let $f^s_\omega(x)=\frac{x-a_{\omega,s}}{b_{\omega,s}-a_{\omega,s}}$ and consider a measurable  mapping $\omega\mapsto \mathbf{T}_{\omega}^s$ from $(\Omega,\mathcal F)$ to the space of $C^1$ diffeomorphisms of $[0,1]$ endowed with its Borel $\sigma$-field. We consider  the measurable $C^1$ diffeomorphism  $T_{\omega}^s: U_{\omega}^s \rightarrow [0,1]$ by $T_{\omega}^s=\mathbf{T}_{\omega}^s\circ f^s_\omega$. We denote the  inverse of $T_\omega^s$ by $g_{\omega}^s$. We also define
\begin{eqnarray*}
	U_{\omega}^v&=&g_{\omega}^{v_0}\circ g_{\sigma\omega}^{v_1}\circ\cdots\circ g_{\sigma^{n-1}\omega}^{v_{n-1}}([0,1]),\ \forall v=v_0v_1\cdots v_{n-1}\in\Sigma_{\omega,n},\\
	X_{\omega}&=&\bigcap_{n\geq 1}\bigcup_{v\in\Sigma_{\omega,n}}U_{\omega}^v,\\
	X_{\Omega}&=&\{(\omega,x):\omega\in\Omega, x\in X_{\omega}\},
\end{eqnarray*}
and  for all $\omega\in\Omega$,  $ s\ge 1$ and $x\in U_{\omega}^{s}$, 
$$
\psi(\omega,s,x)=-\log|(T_{\omega}^{s})'(x)|.
$$

We say that a measurable function $\widetilde \psi$ defined on $\widetilde{U}_{\Omega}=\{(\omega,s,x):\omega\in \Omega,1\leq s\leq l(\omega),x\in U_{\omega}^s\}$ is in $\mathbb{L}^1_{X_{\Omega}}(\Omega,\widetilde{C}([0,1]))$ if
\begin{enumerate}
	\item 
	$\int_{\Omega}\|\widetilde\psi(\omega)\|_{\infty}\mathrm{d}\mathbb{P}(\omega)<\infty$,
	where
	$\|\widetilde\psi(\omega)\|_{\infty}:=\sup_{1\leq s\leq l(\omega)}\sup_{x\in U_{\omega}^{s}}|\widetilde\psi(\omega,s,x)|$,
	
	\item for $\mathbb{P}$-a.e.   $\omega\in\Omega$, $\text{var}(\widetilde\psi,\omega,\varepsilon) \to 0$ as $\varepsilon\to 0$, where 
	$$\text{var}(\widetilde\psi,\omega,\varepsilon)=\sup_{1\leq s\leq l(\omega)}\sup_{x,y\in U_{\omega}^s\text{ and } |x-y|\leq \varepsilon }|\widetilde\psi(\omega,s,x)-\widetilde\psi(\omega,s,y)|.$$
\end{enumerate}

We will make the following assumption:

\medskip $\psi\in\mathbb{L}^1_{X_{\Omega}}(\Omega,\widetilde{C}([0,1]))$ and $\psi$ satisfies the  contraction property in the mean  
\begin{equation}\label{cpsi}
c_{\psi}:=-\int_{\Omega}\sup_{1\leq s\leq l(\omega)}\sup_{x\in U_{\omega}^{s}}\psi(\omega,s,x)\mathrm{d}\mathbb{P}(\omega)>0.
\end{equation}

Under this assumption, there is $\mathbb{P}$-almost surely a natural projection $\pi_\omega: \Sigma_{\omega}\rightarrow X_{\omega}$ defined as
$$
\pi_\omega(\underline{v})= \lim_{n\rightarrow\infty}g_{\omega}^{v_0}\circ g_{\sigma\omega}^{v_1}\circ\cdots\circ g_{\sigma^{n-1}\omega}^{v_{n-1}}(0).$$
This mapping may not be injective, but any $x\in X_{\omega}$ has at most two preimages in $\Sigma_{\omega}$. We can define 
$
\Psi(\omega,\vl)=\psi(\omega,v_0,\pi(\vl))\text{ for }\vl=v_0v_1\cdots\in\Sigma_{\omega}$, then $ \Psi\in \mathbb{L}^1_{\Sigma_{\Omega}}(\Omega,C(\Sigma))$.
By a standard way, we can easily get that for $\mathbb{P}$-almost every $\omega\in \Omega$, the Bowen-Ruelle formula holds, i.e. $\dim_H X_{\omega}=t_0$ where $t_0$ is the unique root of the equation $P(t\Psi)=0$.


\subsection{Multifractal analysis of the random weak Gibbs measures}
Our results statement require some additional definitions related to multifractal formalism. 

Let $\mu$ be a compactly supported positive and finite Borel measure on $\R^n$.
\begin{definition}
	The (lower) $L^q$-spectrum $\tau_{\mu}:\mathbb{R}\rightarrow\mathbb{R}\cup\{-\infty\}$ and the upper-$L^q$ spectrum $\overline{\tau}_{\mu}:\mathbb{R}\rightarrow\mathbb{R}\cup\{-\infty\}$ are respectively defined by
	\begin{align}
	\label{Lql}
	\tau_{\mu}(q)&=\liminf_{r\rightarrow 0}\frac{\log \sup\{\sum_{i}(\mu(B_i))^q\}}{\log (r)}\\
	\label{Lqu}
	\text{and }\overline{\tau}_{\mu}(q)&=\limsup_{r\rightarrow 0}\frac{\log \sup\{\sum_{i}(\mu(B_i))^q\}}{\log (r)},
	\end{align}
	where the supremum is taken over all families of disjoint closed balls $B_i$ of radius $r$ with centers in $\text{supp}(\mu)$.
\end{definition}

By construction, the function $\tau_\mu$ is non decreasing and concave over its domain, which equals $\R$ of $\R^+$ (see \cite{LN,Barral2015}). 

\begin{definition}
	The lower and upper large deviations spectra $\underline{LD}$ and $\overline{LD}$ are given by
	\begin{equation}\label{LD}
	\underline{LD}_{\mu}(d)=\lim_{\varepsilon\to 0}\liminf_{r\rightarrow 0}\frac{\log \#\{i:r^{d+\varepsilon}\leq\mu(B(x_i,r)\leq r^{d-\varepsilon})\}}{-\log (r)}
	\end{equation}
	\begin{equation}\label{LDu}
	\overline{LD}_{\mu}(d)=\lim_{\varepsilon\to 0}\limsup_{r\rightarrow 0}\frac{\log \#\{i:r^{d+\varepsilon}\leq\mu(B(x_i,r)\leq r^{d-\varepsilon})\}}{-\log (r)},
	\end{equation}
	where the supremum is taken over all families of disjoint closed balls $B_i=B(x_i,r)$ of radius $r$ with centers $x_i$ in $\supp(\mu)$.
\end{definition}

\begin{definition}
	For all $x\in \supp(\mu)$, define
	$$\dl(\mu,x)=\liminf_{r\to 0^+}\frac{\log \mu(B(x,r))}{\log r} \text{ and } \du(\mu,x)=\limsup_{r\to 0^+}\frac{\log \mu(B(x,r))}{\log r}.$$
	Then, for $d\le d'\in \R$, define
	\begin{align*}
	\El(\mu,d)&=\{x\in \supp(\mu) :\dl(\mu,x)=d\},\\
	\overline{E}(\mu,d)&=\{x\in \supp(\mu) :\du(\mu,x)=d\},\\
	E(\mu,d)&=\El(\mu,d)\cap\overline{E}(\mu,d),\\
	E(\mu,d,d')&=\{x\in \supp(\mu):\dl(\mu,x)=d,\du(\mu,x)=d'\}.
	\end{align*}
	It is clear that since $\mu$ is bounded, $E(\mu,d,d')=\emptyset$ if  $d'<0$.
	
	Finally, define
	$$\dim_H(\mu)=\sup\{s:\text{ for }\mu\text{-almost every }x\in \supp(\mu), \dl(\mu,x)\geq s\}$$
	and 
	$$\dim_p(\mu)=\sup\{s:\text{ for }\mu\text{-almost every }x\in \supp(\mu), \du(\mu,x)\geq s\}$$
	Equivalent definitions are (see~\cite{Falconer1}):
	$$\dim_H(\mu)=\inf\{\dim_H E: \,E\text{ Borel set},\, \mu(E)>0\}
	$$and 
	$$\dim_p(\mu)=\inf\{\dim_p E: \,E\text{ Borel set},\,\mu(E)>0\}.$$
	
\end{definition}

\begin{definition}(Legendre Transform)
	For any function $f:\mathbb{R}\to \mathbb{R}\cup\{-\infty\}$ with non-empty domain, its Legendre transform $f^*$ is defined on $\mathbb{R}$  by
	$$f^*(d)=\inf_{q\in \mathbb{R}}\{d q-f(q)\}\in \mathbb{R}\cup\{-\infty\}.$$
\end{definition}

One always has (see \cite{Olsen1,LN})
\begin{align*}
\dim_H E(\mu,d)&\le \min (\dim_H \El(\mu,d), \dim_H \overline E(\mu,d),\dim_P E(\mu,d))\\
&\le  \max (\dim_H \El(\mu,d), \dim_H \overline E(\mu,d),\dim_P E(\mu,d))\le \tau_\mu^*(d).
\end{align*} 
and (see \cite{Barral2015})
\begin{align}
\dim_H E(\mu,d)&\le \underline{LD}_{\mu}(d)\le \overline{\tau}^{*}_{\mu}(d) \label{LDl}\\
\dim_P E(\mu,d)&\le \overline{LD}_{\mu}(d)\le \tau^{*}_{\mu}(d)\label{LDO}.
\end{align}

\begin{definition}(Multifractal formalism)
	We say that $\mu$ obeys the  multifractal formalism at $d\in\mathbb{R}\cup\{\infty\}$ if $\dim_H E(\mu,d)=\tau_\mu^*(d)$, and that the multifractal formalism holds (globally) for $\mu$ if it holds at any $d\in\mathbb{R}\cup\{\infty\}$ (here a negative dimension means that the set is empty).
	
\end{definition}
The reader should have in mind that if the domain of $\tau_\mu$ is the whole interval~$\R$, then $\tau_\mu^*(d)\ge 0$ if and only if $\tau_\mu^*(d)>-\infty$, i.e. $d\in [\tau_\mu'(+\infty),\tau_\mu'(-\infty)]$.  Also, if the multifractal formalism holds at $d$, then 
$$\dim_H E(\mu,d)=\dim_{P}E(\mu,d)=\underline{LD}_{\mu}(d)=\overline{LD}_{\mu}(d)=\tau_{\mu}^\ast(d).
$$

\medskip

Let $\phi\in \mathbb{L}^1_{\Sigma_{\Omega}}(\Omega,\widetilde{C}([0,1]))$
and consider the function 
$$
\Phi(\omega,\vl)=\phi(\omega,v_0,\pi(\vl)) \quad (\vl=v_0v_1\cdots\in\Sigma_{\omega}).
$$
We have $\Phi\in\mathbb{L}^1_{\Sigma_{\Omega}}(\Omega,C(\Sigma))$. Let $\mu$ be the random weak Gibbs measure on $\{X_\omega:\omega\in \Omega\}$ obtained as $\mu_{\omega}={\pi_{\omega}}_* \widetilde \mu_\omega:= \widetilde \mu_\omega \circ \pi_\omega^{-1}$, where $\widetilde\mu$ is obtained from proposition~\ref{eigen}. Without changing the random measures $\widetilde\mu_\omega$ and $\mu_\omega$,  we can  assume $P(\Phi)=0$.  Then, since equation~\eqref{cpsi}, for any $q\in \mathbb{R}$, there exists a unique ${T}(q)\in\R$ such that 
$
P(q\Phi-{T}(q)\Psi)=0,
$ 
and the mapping $T$ is concave and non decreasing. 
\medskip

\begin{theorem}\label{multifractal initial}
	For $\mathbb{P}$-a.e.  $\omega\in\Omega$,
	\begin{enumerate}
		\item for all $q\in \R$, $\tau_{\mu_{\omega}}(q)=\overline\tau_{\mu_\omega}(q)=T(q)=\displaystyle\min_{\rho\in\mathcal{I}_{\mathbb{P}}(\Sigma_\Omega)}\left\{\frac{h_\rho(F)+q \int \Phi \mathrm{d}\rho}{\int \Psi \mathrm{d}\rho}\right\}.$
		
		\item The  multifractal formalism holds globally for $\mu_\omega$.	Furthermore, for all $d\in [T'(+\infty),T'(-\infty)]$, one has
		$$\dim_H E(\mu_\omega,d)=T^*(d)=\max_{\rho\in\mathcal{I}_{\mathbb{P}}(\Sigma_\Omega)}\left \{-\frac{h_\rho(F)}{\int \Psi \mathrm{d}\rho}: \dfrac{\int \Phi \mathrm{d}\rho}{\int \Psi \mathrm{d}\rho}=d \right\}.$$
		\item For all $d\le  d'\in [T'(+\infty),T'(-\infty)]$,
		\begin{align*}\dim_H E(\mu_{\omega},d,d')&=\inf\{T^*(d),T^{*}(d')\}\\
		\text{and }\dim_{P} E(\mu_{\omega},d,d')&=\sup\{T^*(\beta):\beta\in [d,d']\}.
		\end{align*}
		\item  For all $d\in [T'(+\infty),T'(-\infty)]$,
		\begin{align*}\dim_H \El(\mu_{\omega},d)&=T^*(d),\ \dim_{P} \El(\mu_{\omega},d)=\sup\{T^*(d'):d'\geq d\},\\
		\dim_H \overline{E}(\mu_{\omega},d)&=T^*(d),\ \text{and }\dim_{P} \overline{E}(\mu_{\omega},d)=\sup\{T^*(d'):d'\leq d\}.
		\end{align*}
		
	\end{enumerate} 
\end{theorem}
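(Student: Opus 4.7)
The plan is to reduce everything to thermodynamics on the symbolic side, transported through $\pi_\omega$ to $X_\omega$. The first ingredient I would establish is a \emph{weak Gibbs estimate} for $\widetilde\mu_\omega$: for $\mathbb{P}$-a.e.\ $\omega$ there is a subexponential sequence $\varepsilon_n(\omega)\to 0$ and, for $v\in\Sigma_{\omega,n}$ and any $\underline w\in[v]_\omega$,
\begin{equation*}
e^{-n\varepsilon_n(\omega)}\exp\bigl(S_n\Phi(\omega,\underline w)-S_n\log\lambda(\omega)\bigr)\le \widetilde\mu_\omega([v]_\omega)\le e^{n\varepsilon_n(\omega)}\exp\bigl(S_n\Phi(\omega,\underline w)-S_n\log\lambda(\omega)\bigr),
\end{equation*}
using $\mathrm{var}_n\Phi(\omega)\to 0$ and the topological mixing / $\ast$-connection device together with Proposition~\ref{eigen}; the same two-sided estimate applies to $|U_\omega^v|$ with $\Psi$ in place of $\Phi$ (bounded distortion in the mean coming from the distortion function $\psi\in\mathbb L^1_{X_\Omega}$). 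Because $\pi_\omega$ is at most $2$-to-$1$, $\mu_\omega(U_\omega^v)$ inherits the same estimate; and because the intervals $U_\omega^v$ have controlled geometry, a ball $B(x,r)\cap X_\omega$ is comparable to a bounded union of cylinders of the largest generation $n$ with $|U_\omega^v|\ge r$. This gives the basic dictionary ball $\leftrightarrow$ cylinder required throughout.

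\textbf{Part (1) and the variational formula.} With the weak Gibbs estimate, for every $q\in\mathbb R$ and every $\omega$ in a set of full probability,
\begin{equation*}
\sum_i \mu_\omega(B_i)^q \asymp e^{o(n)} \sum_{v\in \Sigma_{\omega,n(r)}} \exp\bigl(q S_n\Phi(\omega,\cdot)\bigr), \qquad |U_\omega^v|\asymp r,
\end{equation*}
where $n(r)$ is the stopping generation. Combining with $|U_\omega^v|\asymp\exp(S_n\Psi)$ and using the definition of $T(q)$ through $P(q\Phi-T(q)\Psi)=0$, the Kingman subadditive computation yields both $\tau_{\mu_\omega}(q)\ge T(q)$ and $\overline\tau_{\mu_\omega}(q)\le T(q)$, hence the common value $T(q)$. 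The variational expression is then the relativised variational principle for the bundle random transformation $F$ (Kifer–Bogensch\"utz–Gundlach), applied to the continuous potential $q\Phi-T(q)\Psi$: the pressure equals $\sup_\rho\{h_\rho(F)+\int(q\Phi-T(q)\Psi)\,\mathrm d\rho\}$, so $P(q\Phi-T(q)\Psi)=0$ rearranges into the $\min$ formula in the statement (attained because one can select a measurable equilibrium state for the H\"older approximations of $\Phi$ and pass to a weak-$\ast$ limit).

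\textbf{Parts (2) and (4): spectra of $E(\mu_\omega,d)$, $\El$, $\overline E$.} The upper bounds $\dim_H E\le T^*(d)$, $\dim_H\overline E\le T^*(d)$ and the packing-dimension upper bounds come from \eqref{LDl}--\eqref{LDO} plus Part~(1). For the lower bounds I would, for each $d=T'(q)$ in the interior of $[T'(+\infty),T'(-\infty)]$, produce the random equilibrium measure $\rho_q\in\mathcal I_{\mathbb P}(\Sigma_\Omega)$ associated to $q\Phi-T(q)\Psi$: by the relativised Shannon--McMillan--Breiman theorem for $F$ and the Birkhoff theorem applied to $\Phi$ and $\Psi$,
\begin{equation*}
\lim_{n\to\infty}\frac{-\log\rho_{q,\omega}([v_0\cdots v_{n-1}]_\omega)}{n}=h_{\rho_q}(F),\qquad \lim_{n}\frac{S_n\Phi}{n}=\int\Phi\,\mathrm d\rho_q,\qquad \lim_{n}\frac{S_n\Psi}{n}=\int\Psi\,\mathrm d\rho_q,
\end{equation*}
for $\rho_{q,\omega}$-a.e.\ $\underline v$. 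Pushing $\rho_{q,\omega}$ by $\pi_\omega$ and feeding these three limits into the weak-Gibbs/distortion dictionary, every generic point satisfies $\dim_{\rm loc}(\mu_\omega,\pi_\omega(\underline v))=d$ and carries a measure of local dimension $T^*(d)=-h_{\rho_q}/\int\Psi\,\mathrm d\rho_q$. Billingsley's lemma then gives $\dim_H E(\mu_\omega,d)\ge T^*(d)$, which together with the upper bound establishes the multifractal formalism at each interior $d$. Edge values and $d=\pm\infty$ are handled by approximation in $q$. The $\dim_P\El$ and $\dim_P\overline E$ formulas come by writing these sets as countable unions: $\El=\bigcup_{d'\ge d}E(\mu_\omega,d,d')$ etc., and using the packing-dimension formula of Part (3).

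\textbf{Part (3): divergent points, the main obstacle.} The subtle point is the full spectrum of $E(\mu_\omega,d,d')$ with $d<d'$, because no invariant measure has generic local dimension oscillating between $d$ and $d'$. I would follow the inhomogeneous Moran/Bernoulli construction used for deterministic weak Gibbs measures, adapted to the random subshift: choose two equilibrium states $\rho_{q_d}$ and $\rho_{q_{d'}}$ with $T'(q_d)=d$ and $T'(q_{d'})=d'$ and build, on a set of full $\mathbb P$-measure, a probability measure $\nu_\omega$ concentrated on $E(\mu_\omega,d,d')$ by concatenating, along a very sparse sequence of generations $(N_k)$, cylinders typical for $\rho_{q_d,\sigma^{\cdot}\omega}$ and cylinders typical for $\rho_{q_{d'},\sigma^{\cdot}\omega}$ in alternating blocks whose relative lengths are engineered so that the Birkhoff averages of $\Phi/\Psi$ oscillate between $d$ and $d'$. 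The $\ast$-connection words from Section~2.1 are inserted to ensure admissibility, and their density is negligible by the assumption $\int\log M\,\mathrm d\mathbb P<\infty$ controlled through Kac-type estimates. For the Hausdorff part I would tune the construction to the measure with the smaller exponent $\min(T^*(d),T^*(d'))$, apply a mass distribution principle, and match with the upper bound coming from $\El\cap\overline E\subset\El$; for the packing part I would combine the union $E(\mu_\omega,d,d')=\bigcup_{\beta\in[d,d']}E(\mu_\omega,\beta)$ with the construction of measures realising $T^*(\beta)$ on a suitably-chosen dense countable subfamily of $\beta$'s. The main technical difficulty, in all these constructions, is to control simultaneously, on a single $\mathbb P$-conull set of $\omega$, the random errors $\varepsilon_n(\omega)$, the fluctuations of $S_n\Phi$ and $S_n\Psi$ along fibers $\sigma^k\omega$, and the length of the inserted connection words; I would handle it by Borel--Cantelli arguments after a careful diagonal extraction over countable dense families of parameters $(q,q_d,q_{d'})$ and of error scales.
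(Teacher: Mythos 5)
Your overall architecture (weak Gibbs estimates for $\widetilde\mu_\omega$ and $|U_\omega^v|$, auxiliary measures for the lower bound of $\tau_{\mu_\omega}$, a concatenated Moran construction with a mass distribution principle for the divergence points) matches the paper's, but there are two places where the argument as written has genuine gaps, and both are exactly the points the paper is built to overcome. First, your lower bounds in Parts (2)--(4) rest on the random equilibrium states $\rho_q$ of the \emph{continuous} potentials $q\Phi-T(q)\Psi$ together with the relativized SMB theorem. For weak Gibbs (merely continuous) potentials these equilibrium states exist but are in general not Gibbs: SMB and Birkhoff only give asymptotic, non-quantitative control of $\rho_{q,\omega}([v]_\omega)$ along a $\rho_q$-generic orbit, which is not enough either (i) to run the concatenation in Part (3), where one must control cylinder masses and Birkhoff sums at \emph{finite} times, uniformly over the (random) starting positions $m_i$ of the blocks along the fiber, or (ii) to prove the almost-doubling property needed for the ball-versus-cylinder dictionary. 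Moreover $T$ need not be differentiable, so the parametrization $d=T'(q)$ does not reach every $d\in[T'(+\infty),T'(-\infty)]$ and the selection of a $\rho$ with $\int\Phi\,\mathrm d\rho/\int\Psi\,\mathrm d\rho=d$ and $-h_\rho/\int\Psi\,\mathrm d\rho=T^*(d)$ already requires an approximation argument. The paper resolves all of this by replacing $(\Phi,\Psi)$ with random H\"older approximations $(\Phi_i,\Psi_i)$, whose associated potentials $\Lambda_{i,q}=q\Phi_i-T_i(q)\Psi_i$ have genuine random Gibbs measures with uniform constants on Egorov sets $\Omega_i$, and then concatenating \emph{these} Gibbs measures along return times to $\Omega_i$; the convergence $T_i\to T$, $T_i^*\to T^*$ is what transfers the conclusion back to $(\Phi,\Psi)$.

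Second, your sentence ``a ball $B(x,r)\cap X_\omega$ is comparable to a bounded union of cylinders of the largest generation $n$ with $|U_\omega^v|\ge r$'' hides the main geometric difficulty of the model: consecutive intervals $U_\omega^s$, $U_\omega^{s+1}$ may touch (no gaps), so a ball centered at a point lying very close to the boundary of its cylinder can pick up the mass of a neighboring cylinder that is exponentially larger. For the $L^q$-spectrum (where one sums over all cylinders of a stopping generation) this is harmless, but for the pointwise local dimension needed in Parts (2)--(4) one must show that the reference measure gives full mass to points $x$ whose generation-$n^i_k$ cylinder agrees with its neighbors up to generation $n^i_{k-1}$ (the sets $\mathcal U(i,\cdot,k)$ and Lemma~\ref{control} of the paper); the proof of that lemma again uses the Gibbs property of $\mu^{\Lambda_{i,q}}$ and would not go through for a non-Gibbs equilibrium state. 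Minor further points: the hypothesis $\int\log M\,\mathrm d\mathbb P<\infty$ you invoke is not assumed in the paper (only $M<\infty$ a.s., handled via return times to $\{M\le M_i\}$), and the direct claim $\overline\tau_{\mu_\omega}(q)\le T(q)$ from the Kingman computation needs, for each small $r$, the exhibition of a packing realizing $\sum_i\mu_\omega(B_i)^q\ge r^{T(q)+o(1)}$, which is again a ball-versus-cylinder statement rather than a pure partition-function one.
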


We refer to \cite{Mattila} for the notions of generalized Hausdorff and packing measures $\mathcal H^g$ and $\mathcal P^g$ associated with a gauge function $g$, i.e. a function $g:[0,\infty)\to [0,\infty) $ which is  non decreasing and satisfies $g(0)=0$. 
\begin{theorem} \label{multifractal initial2}
	For all $d\in [T'(+\infty),T'(-\infty)]$ such that $T^{*}(d)<\max (T^*)$, for all gauge function $g$, the following zero-infinity laws hold: 
	\begin{align*}
	\mathcal H^g(E(\mu,d))&=\begin{cases} 0&\text{if }\limsup_{r\to 0}\frac{\log g(r)}{\log r}> T^*(d),\\
	+\infty&\text{ otherwise}
	\end{cases} \\
	\text{and }
	\mathcal P^g(E(\mu,d))&=\begin{cases} 0&\text{if }\liminf_{r\to 0}\frac{\log g(r)}{\log r}> T^*(d),\\
	+\infty&\text{ otherwise}
	\end{cases}.
	\end{align*}
\end{theorem}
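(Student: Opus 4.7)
The plan is to leverage the multifractal formalism established in Theorem~\ref{multifractal initial}, together with the auxiliary random weak Gibbs measures used in its proof, to obtain the finer zero-infinity dichotomy. Fix $d \in [T'(+\infty), T'(-\infty)]$ with $T^*(d) < \max T^*$ and let $q_0 \in \mathbb{R}$ realise the Legendre duality $T^*(d) = q_0 d - T(q_0)$. Since $\max T^* = -T(0)$ is attained precisely at $d = T'(0)$, corresponding to $q_0 = 0$, the hypothesis $T^*(d) < \max T^*$ forces $q_0 \ne 0$. Let $\nu = \{\nu_\omega\}$ denote the random weak Gibbs measure on $\{X_\omega\}$ associated with the potential $q_0 \Phi - T(q_0) \Psi$ via Proposition~\ref{eigen}. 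From the analysis underlying Theorem~\ref{multifractal initial}, for $\mathbb P$-a.e.\ $\omega$, $\nu_\omega$-a.e.\ $x$ lies in $E(\mu_\omega, d)$ and satisfies $\lim_{r \to 0^+} \log \nu_\omega(B(x,r))/\log r = T^*(d)$.

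The two zero cases are relatively direct. For the packing case, $\liminf_{r\to 0}\log g(r)/\log r > T^*(d)$ gives $\epsilon > 0$ and $r_0 > 0$ with $g(r) \le r^{T^*(d)+\epsilon}$ for $r \le r_0$; monotonicity of generalised packing measures in the gauge together with $\dim_P E(\mu_\omega, d) = T^*(d)$ then yield $\mathcal P^g(E(\mu_\omega, d)) \le \mathcal P^{T^*(d)+\epsilon/2}(E(\mu_\omega, d)) = 0$. For Hausdorff, $\limsup > T^*(d)$ provides a scale sequence $r_n \to 0$ with $g(r_n) \le r_n^{T^*(d)+\epsilon}$. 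Using the lower estimate $\nu_\omega(B(x,r)) \ge r^{T^*(d)+\epsilon/2}$ valid on a subset $A \subset E(\mu_\omega, d)$ of $\nu_\omega$-measure arbitrarily close to $1$, a maximal packing argument in $[0,1]$ covers $A$ by at most $C r_n^{-T^*(d)-\epsilon/2}$ intervals of length $r_n$, of total $g$-mass at most $C r_n^{\epsilon/2} \to 0$. A diagonal exhaustion over nested almost-full subsets of $E(\mu_\omega, d)$ then gives $\mathcal H^g(E(\mu_\omega, d)) = 0$.

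The infinity cases form the main obstacle. Given $\limsup_{r\to 0}\log g(r)/\log r \le T^*(d)$, I plan to construct, for every integer $M \ge 1$, a Borel subset $F_M \subset E(\mu_\omega, d)$ and a probability measure $\eta_M$ on $F_M$ with $\eta_M(B(x,r)) \le C g(r)/M$ for all $x \in F_M$ and $r \in (0, r_0)$. The mass distribution principle then yields $\mathcal H^g(E(\mu_\omega, d)) \ge M/C$, and letting $M \to \infty$ gives the $+\infty$ conclusion. The measure $\eta_M$ is obtained via a Moran-type multi-scale construction in the spirit of Olsen and of Barral and Mandelbrot, adapted to the random weak Gibbs setting: pick parameters $q_1 < q_0 < q_2$ close to $q_0$ with $T'(q_1), T'(q_2)$ close to $d$ (using $q_0 \ne 0$ and the concavity of $T$, a consequence of $T^*(d) < \max T^*$), and at each cylinder generation distribute mass according to one of the Gibbs measures $\nu^{q_1}, \nu^{q_0}, \nu^{q_2}$, the choice depending on whether $g$ at the corresponding scale exceeds, matches, or is dominated by $r^{T^*(d)}$. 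Closeness of $T'(q_i)$ to $d$ keeps the $\mu_\omega$-local dimension of $\eta_M$-typical points equal to $d$, so $F_M \subset E(\mu_\omega, d)$. The packing infinity case is handled in parallel by iterating nested, maximally dense packings at the scales where $g(r) \gtrsim r^{T^*(d)}$.

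The principal difficulty in realising this plan is the construction of $\eta_M$: one must carry it out measurably in $\omega$; handle the merely weak (non-H\"older) regularity of the potentials by using the $\text{var}$-modulus to control distortion in subexponential fashion; and convert cylinder-level mass estimates into Euclidean ball-level ones via the contraction-in-the-mean condition $c_\psi > 0$. Once $\eta_M$ and its packing analogue are in place, the mass distribution principle and its packing counterpart (Mattila) deliver Theorem~\ref{multifractal initial2}.
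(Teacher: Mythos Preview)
Your plan diverges from the paper's argument in an essential way, and the divergence introduces a genuine gap.

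\textbf{The single weak Gibbs measure $\nu$.} You base both the Hausdorff zero case and the infinity construction on the random weak Gibbs measure $\nu=\nu^{q_0}$ associated with $q_0\Phi-T(q_0)\Psi$, asserting that $\nu_\omega$-a.e.\ $x$ lies in $E(\mu_\omega,d)$ with $\nu_\omega$-local dimension $T^*(d)$. This is \emph{not} what the analysis underlying Theorem~\ref{multifractal initial} establishes: the paper never uses $\nu$ for this purpose, precisely because for merely continuous potentials $T$ need not be differentiable at $q_0$, and the desired exact-dimensionality of $\nu$ on $E(\mu_\omega,d)$ is not available. The paper's lower bound in Theorem~\ref{multifractal initial}(2) is obtained instead by concatenating random Gibbs measures for the H\"older approximations $(\Phi_i,\Psi_i)$, and that is also the machine used for Theorem~\ref{multifractal initial2}. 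In your Hausdorff zero case there is a second problem: exhausting by $\nu_\omega$-almost-full subsets only covers the part of $E(\mu_\omega,d)$ where $\nu_\omega$ has the right lower local behaviour, not all of $E(\mu_\omega,d)$; the standard route is rather via the large-deviations upper bound $\overline{LD}_{\mu_\omega}(d)\le T^*(d)$, which controls at \emph{every} scale $r$ the number of $r$-balls carrying $\mu_\omega$-mass close to $r^d$.

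\textbf{The infinity cases.} Your $M$-dependent scheme with three parameters $q_1<q_0<q_2$ is left largely unspecified (how $M$ enters, and how the scale-by-scale switching is organised, are not explained), and again leans on $\nu^{q_0}$. The paper's argument is both different and simpler. It reuses the concatenated measure $\eta_\omega$ on $K(\omega,\{d_i\}_{i\ge 1})$ from the proof of Theorem~\ref{multifractal initial}(2), with two additional observations. First, since $T^*(d)<\max T^*$ and $T^*$ is continuous and concave, one can choose $d_i\to d$ with $T^*(d_i)-\varepsilon'_i\ge T^*(d)+\varepsilon'_i$ eventually; by \eqref{vgc} this forces $\eta_\omega(B(x,r))\le C' r^{T^*(d)+\varepsilon'_i}$ for $r$ in the range governed by generation $i$. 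Second, Remark~\ref{growth speed} shows the growth of the sequence $(m_i)$ is a free parameter, and one tunes it so that for $\exp(-m_{i+1}c_\Psi/2)\le r\le\exp(-m_i c_\Psi/2)$ one has $2\upsilon_r\le\varepsilon'_i$, where $g(r)\ge r^{T^*(d)+\upsilon_r}$. This yields $g(r)\ge C'^{-1}\eta_\omega(B(x,r))r^{-\upsilon_r}$, hence $\mathcal H^g_\delta(K)\ge C'^{-1}\delta^{-\upsilon_\delta}\to\infty$; the packing case is handled by the same construction along the subsequence of scales where $g(2r_j)\ge r_j^{T^*(d)+\upsilon_{r_j}}$ together with Besicovitch's covering theorem. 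No family indexed by $M$ is needed: a \emph{single} $\eta_\omega$, built once, already gives $+\infty$.
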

Let us put our result in perspective with respect to the existing literature. 

The study achieved in  \cite{PW1,PW2}  leads to  the multifractal nature of Gibbs measures projected on some random Cantor sets whose construction assumes a strong separation condition for the pieces of the construction.  About the same time, the multifractal analysis of random Gibbs measures and Birkhoff averages  on random Cantor sets and the whole torus were obtained in \cite{Kifer2,Kifer3}; 	when the support of the measure is a Cantor set, a strong separation condition is assumed as well. More recently, in \cite{FS}, the  multifractal analysis for disintegrations of Gibbs measures on $\{1,\ldots,m\}^\N\times\{1,\ldots,m\}^\N$ was achieved as a consequence of the multifractal analysis  of conditional Birkhoff averages of random continuous potentials (not $C^\alpha$). The approach developed there could, with some effort, be adapted to derive our results on weak Gibbs measures  if we worked with random fullshift only. However, as we already said it in the beginning of the introduction, the method cannot be extended easily to the random subshift, and our view point will be different. In \cite{FS}, the authors start by establishing large deviations results, and then use them to construct by concatenation Moran sets of arbitrary large dimension in the level sets $E(\mu_\omega,d)$; we will concatenate  information provided by random Gibbs measures associated with H\"older potentials which  approximate the continuous potentials associated with the random weak Gibbs measure and the random maps generating the attractor $X_\omega$. This will provide us with a very flexible tool from which, for instance, we will  deduce the result about the sets $E(\mu_\omega,d,d')$. In this sense, our results also complete a part of those obtained in \cite{MSU} which, in particular, achieves the multifractal analysis of random Gibbs measures on  random Cantor sets obtained as the repeller of random conformal maps.

The multifractal analysis of Birkhoff averages on  random conformal repellers of $C^1$ expanding maps is studied in \cite{Shu}, where the random dynamics is in fact coded by a non random subshift of finite type, and  the random potentials that are considered  satisfy an equicontinuity property stronger than the one we require. 

The sets $E(\mu,d,d')$ were studied for deterministic  Gibbs measures on conformal repellers and for  self-similar measures in \cite{FW2001,Olsen2003,BOS,Olsen2008}.

Finally, in \cite{MWW2002}, zero-infinity laws  are established for Besicovitch subsets of self-similar sets of the line. This inspired theorem~\ref{multifractal initial2}, of which the results in \cite{MWW2002}   turn out to be a special case. Also,  in \cite{MW},  a zero-infinity law is established  for the Hausdorff and packing measure of sets of generic points of invariant measures on a conformal repeller. 

\medskip

\begin{remark}~\label{higherdim} 
	The approach used in this paper can be extended to the higher dimension if the random attractor can be represented as follows:
	For any $\omega\in\Omega$, let $U_{\omega}^1,U_{\omega}^2,\cdots, U_{\omega}^{l(\omega)}$ be closed sets which are the closures of their interiors supposed to be pairwise disjoint.  We assume that 
	\begin{enumerate}
		\item $U_{\omega}^s\subset [0,1]^d$ for any $1\le s\le l(\omega)$.
		\item There exists a measurable $C^1$ diffeomorphism  $T_{\omega}^s: U_{\omega}^s \rightarrow [0,1]^d$. We denote the  inverse of $T_\omega^s$ by $g_{\omega}^s$. 
		Furthermore, $T:\widetilde{U}_{\Omega}=\{(\omega,s,x):\omega\in \Omega,1\leq s\leq l(\omega),x\in U_{\omega}^s\}\to [0,1]^d$ which is defined as $(\omega,s,x)\mapsto T_{\omega}^s(x)$ is measurable.
		\item There exists a function $\psi\in \mathbb{L}^1_{X_{\Omega}}(\Omega,\widetilde{C}([0,1]^d))$ such that 
		$$\exp(-\text{var}(\psi,\omega,d(x,y)))\leq\frac{d(g_{\omega}^s(x),g_{\omega}^s(y))}{\exp(\psi(\omega,s,x)) d(x,y)}\leq\exp(\text{var}(\psi,\omega,d(x,y)))$$
		for any $x,y\in U_{\omega}^s$ with $1\le s\le l(\omega)$.   	  	 
		
		Here, we say that a measurable function $\widetilde \psi$ defined from $\widetilde{U}_{\Omega}$ to $\mathbb{R}$ is in $\mathbb{L}^1_{X_{\Omega}}(\Omega,\widetilde{C}([0,1]^d))$ if
		\begin{itemize}
			\item 
			$\int_{\Omega}\|\widetilde\psi(\omega)\|_{\infty}\mathrm{d}\mathbb{P}(\omega)<\infty$,
			where
			$\displaystyle \|\widetilde\psi(\omega)\|_{\infty}:=\sup_{1\leq s\leq l(\omega)}\sup_{x\in U_{\omega}^{s}}|\widetilde\psi(\omega,s,x)|$,
			
			\item for $\mathbb{P}$-a.e.   $\omega\in\Omega$, $\text{var}(\widetilde\psi,\omega,\varepsilon) \to 0$ as $\varepsilon\to 0$, where 
			$$\text{var}(\widetilde\psi,\omega,\varepsilon)=\sup_{1\leq s\leq l(\omega)}\sup_{x,y\in U_{\omega}^s\text{ and } d(x,y)\leq \varepsilon }|\widetilde\psi(\omega,s,x)-\widetilde\psi(\omega,s,y)|.$$
		\end{itemize}
		\item Equation~\eqref{cpsi} holds.
	\end{enumerate} 
	
	Now, we can define
	\begin{eqnarray*}
		U_{\omega}^v&=&g_{\omega}^{v_0}\circ g_{\sigma\omega}^{v_1}\circ\cdots\circ g_{\sigma^{n-1}\omega}^{v_{n-1}}([0,1]^d),\ \forall v=v_0v_1\cdots v_{n-1}\in\Sigma_{\omega,n}\\
		X_{\omega}&=&\bigcap_{n\geq 1}\bigcup_{v\in\Sigma_{\omega,n}}U_{\omega}^v\\
		X_{\Omega}&=&\{(\omega,x):\omega\in\Omega, x\in X_{\omega}\}.
	\end{eqnarray*}
	
	Nevertheless there is a difference in the estimation of  the local dimensions of measures. As will see,  in this paper, a building block in our proofs is the comparison of the mass assigned by a random Gibbs measure to neighboring basic intervals of the form $U^v_\omega$, in order to control the mass assigned to centered intervals by  a random weak Gibbs measure, as well as some auxiliary measures obtained by concatenation of pieces of random Gibbs measures (this point of view is fruitful in the study of the discrete inverses of random weak Gibbs measures in the companion paper mentioned at the beginning of this introduction). In higher dimension the situation is different in general. If a strong separation condition is satisfied by the basic sets $U^v_\omega$, there is no much difference with the 1 dimensional case. Otherwise, one can adapt the method used in  \cite{Patzschke} for self-conformal measures and under the open set condition, which, for any Gibbs measure $\nu$, consists in controlling the asymptotic behavior of the distance of $\nu$-almost every point $x$ to the boundary of the basic set of the $n$-th generation  containing $x$.  We omit the details. 
\end{remark}

\subsection{Examples of random attractor} 
We end this section with  examples illustrating  our assumptions on the random attractors considered in this paper. 
As a first example, one has the fibers of McMullen-Bedford self-affine carpets, and more generally the Gatzouras-Lalley self-affine carpets \cite{LG}, which naturally illustrate the idea that at a given step of the construction two consecutive intervals $U_\omega^s$ and $U_\omega^{s+1}$ may touch each other.  In \cite{Luzia}, Luzia considers  a class of expanding maps of the 2-torus of the form $f(x,y)=(a(x,y),b(y))$ that are $C^2$-perturbations of Gatzouras-Lalley carpets, whose fibers illustrate our purpose with nonlinear maps. These examples are associated with random fullshift. Let us give a first more explicit example associated with a random subshift and a piecewise linear random maps. 

Let $\Omega=\Gamma:=\widetilde{\mathbb{Z}}^+\times\widetilde{\mathbb{Z}}^+\times\cdots,$
$\mathcal{F}$ be the $\sigma$-algebra generated by the cylinders $[n_1n_2\cdots n_k]$ ($k\in \N$,  $n_i\in\tilde{\mathbb{Z}}^+$ for  $1\leq i\leq k$), and $\mathbb{P}$ the probability measure on $(\Omega,\mathcal F)$ defined by 
$$\mathbb{P}([n_1n_2\cdots n_k])=\frac{1}{n_1(n_1+1)}\cdot\frac{1}{n_2(n_2+1)}\cdot\dots\cdot \frac{1}{n_k(n_k+1)}.$$
Also, let $\sigma$ be the shift map on $\Omega$. Such a system is ergodic. It satisfies the conditions we need.

For $\underline{n}=n_1n_2\cdots n_k\cdots\in \Gamma$,
define
$l(\underline{n})=n_1$ and 
$A(\underline{n})$ the  $n_1\times n_2$-matrix with all entries equal to 1 if $n_2\neq n_2-1$ or $n_1=2$, and the $n_1\times n_2$-matrix whose $n_1-1$ first rows have entries equal to 1 and the entries of the $n_1$-th row equal 0 except that $A_{n_1,n_1-1}(\underline n)=1$. It it is easy to check that both $l$ and $A$ are measurable, that  $\int \log l \, d \mathbb{P}<+\infty$, and $l$ and $A$ define a random subshift, which is not a fullshift. Also, the integer $M=\inf\{m\in\N: \,
A(\omega)A(\sigma\omega)\cdots A(\sigma^{M-1}\omega)\text{ is positive}\}$ is measurable, since for  any $k\in \mathbb{N}$, $$\{\underline{n}\in\Omega : M(\underline{n})=k\}=[(k+1)k(k-1)\cdots 2].$$ 
Notice that both $l$ and $M$ are unbounded. 

Then we set $T_{\underline{n}}^i(x)=n_1 x\mod 1$ for $x\in [\frac{i-1}{n_1},\frac{i}{n_1}]$ and for $i=1,2,\cdots,n_1$. 


In fact, the measure $\mathbb P$ defined above is a special example of a Gibbs measure on  $(\Omega,\mathcal F,\sigma)$ (see \cite{Sarig1,Sarig2}). So we can enrich the previous construction by considering any such measure $\mathbb P$ for which  $\int \log l\ {\rm d} \mathbb P<+\infty$. For the mappings maps $T^s_\omega$, here is a way to provide a non trivial example, which seems to be not covered by the existing literature. 

Start with a family $\{\varphi_{s,\omega}\}_{s\in\N}$ of  random $C^1$ differeomorphisms of $[0,1]$ such that at least one $\varphi'_{s,\omega}$ is nowhere $C^\epsilon$ with positive probability. Assume that there exists a random variable $a_0$ taking values in $(0,1]$ and such that
$$
\inf_{1\leq s\leq l(\omega), \,x\in[0,1]} |\varphi'_{s,\omega}(x)|\ge a_0(\omega).
$$
Let $T_{\omega}^s=\varphi_{s,\omega}\circ f^s_\omega$, where $f_{\omega}^s$ is the linear map from $U_{\omega}^s$ onto $[0,1]$. Then, the constant $c_\psi$ of equation~\eqref{cpsi} satisfies 
$$
c_\psi\ge \int_\Omega\left [ \log(a_0(\omega))-\sup_{1\le s\le l(\omega)} \log (|U^s_\omega|)\right]\,\mathrm{d}\mathbb P(\omega).
$$

Thus, we require that
$$
\int_\Omega \left [\log(a_0(\omega))-\sup_{1\le s\le l(\omega)} \log (|U^s_\omega|)\right]\,\mathrm{d}\mathbb P(\omega)>0.
$$

This allows some $T^s_\omega$ be not uniformly expanding, but ensures expansiveness in the mean.    It is easily seen that the Lebesgue measure of $X_\omega$ is almost surely bounded by $\prod_{i=0}^{n-1} \left (\sum_{1\le s\le l(\omega)} |U^s_{\sigma^i\omega}|/a_0(\sigma^i\omega)\right)$ for all $n\ge 1$. Thus, if we strengthen our requirement by assuming  that 
$$
\int_\Omega \left [\log(a_0(\omega))-\log\left (\sum_{1\le s\le l(\omega)} |U^s_\omega|\right)\right] \,\mathrm{d}\mathbb P(\omega)>0,
$$ 
then the Lebesgue measure of $X_\omega$ is 0 almost surely.

Now let us provide a completely  explicit illustration of the last idea (we will work with a random fullshift for simplicity of the exposition). 

We take $(\Omega,\mathcal{F},\mathbb{P},\sigma)$ as the  fullshift  $(\{0,1,2\}^{\mathbb{N}},\mathcal{F},\mathbb{P},\sigma)$. For any $n$-th cylinder $[\omega_0\omega_1\cdots \omega_{n-1}]\subset \Omega$ we set $\mathbb{P}([\omega_0\omega_1\cdots \omega_{n-1}])=\frac{1}{3^n}$. It is the unique ergodic measure of maximal entropy for the shift map.

Let $l$ be a random variable depending on $\omega_0$ only, which is given by
$$l(\omega)=\left\{\begin{array}{ll}
4 & \omega_0=0 \\
1 & \omega_0=1\\
3 & \omega_0=2
\end{array}
\right.$$
The entries of the random transition matrix are always 1 (we consider the random fullshift). We assume that the map $T(\omega,x)$  just depends on $\omega_0$ and $x$.

If $\omega_0=0$, let $\varphi_{s,\omega}(x)=x$ for $s=1,2,3,4$ and $U_{\omega}^1=[0,1/4]$, $U_{\omega}^2=[1/4,1/2]$, $U_{\omega}^3=[1/2,3/4]$ and $U_{\omega}^4=[3/4,1]$. In this case, we know that $a_0(\omega)=1$; notice that the intervals $U_{\omega}^s,\ 1\leq s\leq 4$ cover the interval $[0,1]$. 

If $\omega_0=1$, let  $h(x)=6+\sum_{j=1}^{+\infty}j^{-2}\sin(2^j\pi x)$.
Define
$$\varphi_{1,\omega}(x)=\frac{\int_{0}^x h(t) dt}{\int_{0}^1 h(t) dt},$$
and $U_{\omega}^{1}=[0,1]$.
In this case we can choose $a_0(\omega)=1/2$. It is easy to check that $T_{\omega}^1$ is not expanding on some interval;  furthermore it is just of class $C^1$ since $h$ is nowhere $\epsilon$-H\"{o}lder for any $\epsilon\in (0,1)$.


If $\omega_0=2$, let $\varphi_{s,\omega}(x)=x$ for $s=1,3$ and  $\varphi_{2,\omega}(x)=\frac{7x}{8}+\frac{x^2}{8}$, and  $U_{\omega}^1=[0,1/9]$, $U_{\omega}^2=[1/9,2/9]$, $U_{\omega}^3=[2/3,7/9]$. 
It is easily checked that the left derivative of $T_{\omega}^1$ and the right derivative of $T_{\omega}^2$ do not coincide, so the dynamics is not the restriction of a random conformal map.  
In this case we can choose $a_0(\omega)=7/8$.

Also,
$$\int_\Omega\left[ \log(a_0(\omega))-\log\left (\sum_{1\le s\le l(\omega)} |U^s_\omega|\right)\right] \,\mathrm{d}\mathbb P(\omega)=\frac{\log 21-\log 16}{3}>0,$$
so that all the conditions hold.

\section{Basic properties of random weak Gibbs and random Gibbs measures. Approximation of $(\Phi,\Psi)$ by random  H\"older potentials}\label{BPORWGM'}

This section prepares the proofs of our main results. Sections 3.1 and 3.2 present basics facts about random weak Gibbs and random Gibbs measures. Section 3.3.1 provides an approximation of $(\Phi,\Psi)$ by a family $\{(\Phi_i,\Psi_i)\}_{i\ge 1}$ of random  H\"older potentials. Section 3.3.2 derives some related properties of the associated pressure functions, which yield the variational formulas appearing in theorem~\ref{multifractal initial}. Section 3.3 presents properties related to  the random Gibbs measures associated with the couple $(\Phi_i,\Psi_i)$, which  will be used as building blocks in the concatenation of measures used in the proof of the main parts of theorem~\ref{multifractal initial} and of theorem~\ref{multifractal initial2} (section 4).

\subsection{Properties of weak Gibbs measures} Fix a potential $\Phi\in \mathbb{L}^1_{\Sigma_{\Omega}}(\Omega,C(\Sigma))$ (here $P(\Phi)$ may not be 0). Since $\var_n\Phi(\omega)\to 0$ as $n\to 0$ and $(\var_n\Phi)_{n\ge1}$ is bounded in $L^1$ norm,  using Maker's  ergodic theorem~\cite{Maker}, we  can get
\begin{equation}\label{Vn}
\lim_{n\to\infty}\frac{1}{n}V_n\Phi(\omega)=0,\quad \mathbb{P}\text{-almost surely,}
\end{equation}
where $V_n\Phi(\omega):=\sum_{i=0}^{n-1}\var_{n-i}\Phi(\sigma^i\omega)=o(n)$.
Due to  (\ref{int}) and the ergodic theorem, setting $S_n\|\Phi(\omega)\|_{\infty}=\sum_{i=0}^{n-1}\|\Phi(\sigma^i\omega)\|_{\infty}$, for any positive sequence $(a_n)_{n\ge 0}$ such that $a_n=o(n)$, $\mathbb{P}$-almost surely we have
\begin{equation}\label{Phi}
\big|S_n\|\Phi(\omega)\|_{\infty}-S_{n-a_n}\|\Phi(\omega)\|_{\infty}\big |=nC_{\Phi}-(n-a_n)C_{\Phi}+o(n)=o(n).
\end{equation}

\begin{definition}
	A family $u=\{u_{n,\omega}:\Sigma_{\omega,n}\rightarrow\Sigma_{\omega}\}_{n\in\N}$ of measurable maps satisfying $(u_{n,\omega}(v))|_n=v$ for all $(n,\omega)\in \mathbb{N}\times\Omega$ and $v\in\Sigma_{\omega,n}$  is called an extension. We say that it is measurable if the mapping $(\omega,x)\mapsto u_{n,\omega}(x)$ is measurable for all $n\in\mathbb{N}$.
\end{definition}
Let $u=\{u_{n,\omega}\}$ be an extension and $\Phi\in \mathbb{L}^1_{\Sigma_{\Omega}}(\Omega,C(\Sigma))$. Then for $(n,\omega)\in \mathbb{N}\times\Omega$
\begin{equation*}
Z_{u,n}(\Phi,\omega):=\sum_{v\in\Sigma_{\omega,n}}\exp\big (S_n\Phi(\omega,u_{n,\omega}(v))\big)
\end{equation*}
is called $n$-th partition function of $\Phi$ in $\omega$ with respect to $u$.

Due to the assumption $\log (l)\in\mathbb{L}^1(\Omega,\mathbb{P})$, using the same method as in \cite{Gundlach,KL}, it is easy to prove the following lemma.

\begin{lemma}\label{converge}
	Let $u$ be any extension and $\Phi\in \mathbb{L}^1_{\Sigma_{\Omega}}(\Omega,C(\Sigma))$.
	
	Then $\lim_{n\rightarrow\infty}\frac{1}{n}\log Z_{u,n}(\Phi,\omega)=P(\Phi)$ for $\mathbb{P}$-almost every $\omega\in\Omega$. This limit is independent of $u$.
	
	Furthermore, the variation Principle holds, which means,
	\begin{equation}\label{variation}
	P(\Phi)=\sup_{\rho\in\mathcal{I}_{\mathbb{P}}(\Sigma_\Omega)}\left \{h_\rho(F)+\int \Phi \mathrm{d}\rho\right \}.
	\end{equation} 
\end{lemma}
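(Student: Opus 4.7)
The plan is to transfer the convergence from the supremum-based partition function that defines $P(\Phi)$ to $Z_{u,n}(\Phi,\omega)$ for an arbitrary extension $u$, and then to prove the variational identity by the classical two-sided argument, importing the equilibrium state construction from \cite{Kifer1,Gundlach,KL}. For the first point, given $v\in\Sigma_{\omega,n}$ and $\underline v,\underline w\in [v]_\omega$, a telescoping of $\Phi\circ F^i$ over $i=0,\dots,n-1$ combined with the fact that $F^i(\omega,\underline v)$ and $F^i(\omega,\underline w)$ agree on at least their first $n-i$ coordinates yields $|S_n\Phi(\omega,\underline v)-S_n\Phi(\omega,\underline w)|\le V_n\Phi(\omega)$. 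Consequently $Z_{u,n}(\Phi,\omega)\le\sum_{v\in\Sigma_{\omega,n}}\sup_{\underline v\in[v]_\omega}\exp(S_n\Phi(\omega,\underline v))\le e^{V_n\Phi(\omega)}Z_{u,n}(\Phi,\omega)$, and since $V_n\Phi(\omega)=o(n)$ almost surely by~\eqref{Vn}, the normalised logarithms of $Z_{u,n}(\Phi,\omega)$ and of the sup-based quantity share the same almost sure limit, independent of $u$. Existence of this common limit comes from Kingman's subadditive ergodic theorem applied to the sup-based partition function, whose subadditivity uses the cocycle identity $S_{m+n}\Phi=S_m\Phi+S_n\Phi\circ F^m$ together with the concatenation $\Sigma_{\omega,m+n}\subset\Sigma_{\omega,m}\cdot\Sigma_{\sigma^m\omega,n}$.

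For the upper bound in \eqref{variation}, fix $\rho\in\mathcal I_{\mathbb P}(\Sigma_\Omega)$ and let $\mathcal Q$ be the partition of $\Sigma_\Omega$ into time-$0$ cylinders, so that $\bigvee_{i=0}^{n-1}F^{-i}\mathcal Q(\omega)=\{[v]_\omega:v\in\Sigma_{\omega,n}\}$ almost surely. The concavity inequality $\sum_v p_v(-\log p_v+\log\alpha_v)\le\log\sum_v\alpha_v$ applied with $p_v:=\rho_\omega([v]_\omega)$ and $\alpha_v:=\exp(S_n\Phi(\omega,u_{n,\omega}(v)))$, combined with $F$-invariance of $\rho$ and integration in $\omega$, yields $H_\rho(\bigvee_{i=0}^{n-1}F^{-i}\mathcal Q\mid\pi_\Omega^{-1}(\mathcal F))+n\int\Phi\,\mathrm d\rho\le\int\log Z_{u,n}(\Phi,\omega)\,\mathrm d\mathbb P(\omega)+o(n)$, where the error absorbs $\int V_n\Phi\,\mathrm d\mathbb P=o(n)$ (dominated convergence plus~\eqref{Vn}). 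Dividing by $n$, letting $n\to\infty$, and then taking the supremum over $\mathcal Q$ gives $h_\rho(F)+\int\Phi\,\mathrm d\rho\le P(\Phi)$.

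For the converse, I iterate the eigenequation from Proposition~\ref{eigen}: $(\mathcal L_\Phi^\omega)^*\widetilde\mu_{\sigma\omega}=\lambda(\omega)\widetilde\mu_\omega$ gives, after $n$ iterations and comparison with the sup-based partition function via the first step, the identity $\int\log\lambda\,\mathrm d\mathbb P=P(\Phi)$. Following \cite{Kifer1,Gundlach,KL}, one then exhibits a measurable positive density $h_\omega$ (a normalised eigenfunction of $\mathcal L_\Phi^\omega$), sets $\rho^*_\omega:=h_\omega\widetilde\mu_\omega$ to obtain an element of $\mathcal I_{\mathbb P}(\Sigma_\Omega)$, and uses the weak Gibbs estimates on cylinder masses from the eigenequation together with $V_n\Phi(\omega)=o(n)$ to evaluate the fibre entropy $h_{\rho^*}(F)$ explicitly, reaching $h_{\rho^*}(F)+\int\Phi\,\mathrm d\rho^*=P(\Phi)$. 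The first two steps are essentially forced by $V_n\Phi(\omega)=o(n)$ and the elementary Gibbs inequality; the substantive technical point lies in this third step, namely producing the equilibrium state in the merely continuous (not H\"older) random setting, where the measurability of $h_\omega$ in $\omega$, integrability of $\log\lambda$, and the weak Gibbs control of $\rho^*_\omega([v]_\omega)$ all require care. This construction is precisely the one carried out in \cite{Kifer1,Gundlach,KL}, which the lemma statement defers to.
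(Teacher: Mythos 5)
The paper gives no proof of this lemma---it simply asserts that it follows ``using the same method as in \cite{Gundlach,KL}''---so there is nothing to match your argument against line by line. Your first step (comparing $Z_{u,n}(\Phi,\omega)$ with the sup-based partition function via $V_n\Phi(\omega)=o(n)$, and getting existence of the limit from Kingman) and your proof of the inequality $h_\rho(F)+\int\Phi\,\mathrm d\rho\le P(\Phi)$ are correct and are the standard route. One small imprecision there: the entropy--concavity estimate is derived only for the time-$0$ cylinder partition $\mathcal Q$, so you cannot literally ``take the supremum over $\mathcal Q$''; what you need is that this partition is a generator for $F$, whence $h_\rho(F)=h_\rho(F,\mathcal Q)$. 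That is standard and easily repaired.

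The genuine gap is in your proof of the reverse inequality. You propose to produce an exact equilibrium state $\rho^*_\omega=h_\omega\widetilde\mu_\omega$ where $h_\omega$ is ``a normalised eigenfunction of $\mathcal L_\Phi^\omega$''. For a potential that is merely in $\mathbb{L}^1_{\Sigma_\Omega}(\Omega,C(\Sigma))$ no such positive eigenfunction is available: Proposition~\ref{eigen} only supplies the conformal eigenmeasure $\widetilde\mu_\omega$ of the dual operator, and the absence of an eigenfunction (equivalently, of bounded distortion) is exactly what distinguishes the \emph{weak} Gibbs setting from the Gibbs one. The eigenfunction construction you invoke from \cite{Kifer1,Gundlach} is carried out for random H\"older potentials (it reappears in the paper's Section~3.2 for the approximants $\Phi_i$), not for general continuous ones, so the deferral to those references does not cover this step. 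The standard proof of $P(\Phi)\le\sup_\rho\{h_\rho(F)+\int\Phi\,\mathrm d\rho\}$ in this generality (Bogensch\"utz \cite{Bog}, Kifer--Liu \cite{KL}) is instead a Misiurewicz-type argument: for each $n$ distribute mass on the $n$-cylinders proportionally to $\exp\big(S_n\Phi(\omega,u_{n,\omega}(v))\big)$, average along the orbit to obtain asymptotically $F$-invariant measures, pass to a weak-$*$ limit in $\mathcal I_{\mathbb P}(\Sigma_\Omega)$, and use upper semicontinuity of the fibre entropy on cylinder partitions together with $V_n\Phi(\omega)=o(n)$ to see that any limit point realises the pressure up to $\varepsilon$. (Alternatively one can approximate $\Phi$ by the H\"older potentials $\Phi_i$ of Section~3.3, for which equilibrium states do exist, and let $i\to\infty$.) As written, your third step rests on an object that need not exist.
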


Now let 
$$
\lambda(\omega,n)=\lambda(\omega)\cdot\lambda(\sigma\omega)\cdot\dots\cdot\lambda(\sigma^{n-1}\omega),
$$ 
where $\lambda(\omega)$ is defined as in proposition~\ref{eigen}. The following lemma is direct when the potential $\Phi$ possesses bounded distorsions so that the Ruelle-Perron-Frobenious theorem holds for the operator $\mathcal{L}_{\Phi}^{\omega}$. For general potentials in $\mathbb{L}^1_{\Sigma_{\Omega}}(\Omega,C(\Sigma))$ we need a proof.

\begin{lemma}\label{lambda}
	One has $\displaystyle \lim_{n\to\infty}{\frac{\log \lambda(\omega,n)}{n}}=P(\Phi)$  for $\mathbb{P}$-almost every  $\omega\in\Omega$.
\end{lemma}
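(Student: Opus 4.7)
The plan is to recognise $\log\lambda(\omega,n)=\sum_{i=0}^{n-1}\log\lambda(\sigma^i\omega)$ as a Birkhoff sum over $(\Omega,\sigma,\mathbb{P})$ and apply the ergodic theorem, reducing the problem to the identity $\int\log\lambda\,\mathrm{d}\mathbb{P}=P(\Phi)$. Integrability is immediate: $\lambda(\omega)=\int\mathcal{L}_\Phi^\omega\mathbf{1}\,\mathrm{d}\widetilde\mu_{\sigma\omega}$, and the sum defining $\mathcal{L}_\Phi^\omega\mathbf{1}(\underline{v})$ has between one and $l(\omega)$ non-zero terms, each lying in $[e^{-\|\Phi(\omega)\|_\infty},e^{\|\Phi(\omega)\|_\infty}]$, whence $|\log\lambda(\omega)|\le \|\Phi(\omega)\|_\infty+\log l(\omega)\in L^1(\mathbb{P})$. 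Birkhoff then provides $n^{-1}\log\lambda(\omega,n)\to L:=\int\log\lambda\,\mathrm{d}\mathbb{P}$ almost surely, so it suffices to show $L=P(\Phi)$.

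For the upper bound $L\le P(\Phi)$: iterating, $(\mathcal{L}_\Phi^{\omega,n}\mathbf{1})(\underline{v})=\sum_{v\in\Sigma_{\omega,n},\,A_{v_{n-1},v_0}(\sigma^{n-1}\omega)=1}e^{S_n\Phi(\omega,v\underline{v})}$. For any measurable extension $u$, the variation estimate $|S_n\Phi(\omega,\underline{z})-S_n\Phi(\omega,\underline{z}')|\le V_n\Phi(\omega)$ (whenever $\underline{z}$ and $\underline{z}'$ agree on their first $n$ coordinates) yields $(\mathcal{L}_\Phi^{\omega,n}\mathbf{1})(\underline{v})\le e^{V_n\Phi(\omega)}Z_{u,n}(\Phi,\omega)$, so $\lambda(\omega,n)\le e^{V_n\Phi(\omega)}Z_{u,n}(\Phi,\omega)$. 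Taking logarithms, dividing by $n$, and invoking \eqref{Vn} together with Lemma~\ref{converge}, we conclude $L\le P(\Phi)$.

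For the lower bound $L\ge P(\Phi)$, I would exploit the topological mixing. Given $v\in\Sigma_{\omega,n}$ and $v_0\in\Sigma_{\sigma^{n+M(\sigma^n\omega)}\omega,1}$, positivity of the product $A(\sigma^n\omega)\cdots A(\sigma^{n+M(\sigma^n\omega)-1}\omega)$ furnishes a connecting word $w(v,v_0)\in\Sigma_{\sigma^n\omega,M(\sigma^n\omega)}$ satisfying $v\,w(v,v_0)\in\Sigma_{\omega,n+M(\sigma^n\omega)}$ and $A_{w(v,v_0)_{M-1},v_0}(\sigma^{n+M-1}\omega)=1$. Keeping only these contributions in $(\mathcal{L}_\Phi^{\omega,n+M(\sigma^n\omega)}\mathbf{1})(\underline{v})$ and applying the variation estimate produces
$$(\mathcal{L}_\Phi^{\omega,n+M(\sigma^n\omega)}\mathbf{1})(\underline{v})\ge e^{-V_n\Phi(\omega)-S_{M(\sigma^n\omega)}\|\Phi(\sigma^n\omega)\|_\infty}Z_{u,n}(\Phi,\omega),$$
and hence, by the cocycle identity $\lambda(\omega,n+M)=\lambda(\omega,n)\lambda(\sigma^n\omega,M)$,
$$\lambda(\omega,n)\ge \frac{e^{-V_n\Phi(\omega)-S_{M(\sigma^n\omega)}\|\Phi(\sigma^n\omega)\|_\infty}}{\lambda(\sigma^n\omega,M(\sigma^n\omega))}\,Z_{u,n}(\Phi,\omega).$$

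The main obstacle is that $M$ is only assumed $\mathbb{P}$-a.s.\ finite, so the error terms above cannot be controlled uniformly in $n$. To bypass this, pick $K$ with $\mathbb{P}(M\le K)>0$ and apply the ergodic theorem to $\mathbf{1}_{\{M\le K\}}$, producing for $\mathbb{P}$-a.e.\ $\omega$ a subsequence $(n_j)\subset\mathbb{N}$ of positive density with $M(\sigma^{n_j}\omega)\le K$. Along $(n_j)$, both $S_{M(\sigma^{n_j}\omega)}\|\Phi(\sigma^{n_j}\omega)\|_\infty$ and $|\log\lambda(\sigma^{n_j}\omega,M(\sigma^{n_j}\omega))|$ are bounded by sums of at most $K$ values of the $L^1$ functions $\|\Phi(\cdot)\|_\infty$ and $\log l(\cdot)$ evaluated at $\sigma^{n_j+i}\omega$, each of which is $o(n_j)$ since any $f\in L^1(\mathbb{P})$ satisfies $f(\sigma^n\omega)/n\to 0$ a.s. Combined with \eqref{Vn} and Lemma~\ref{converge}, this gives $n_j^{-1}\log\lambda(\omega,n_j)\ge P(\Phi)-o(1)$; since the full limit $L$ already exists by Birkhoff, $L\ge P(\Phi)$ follows, completing the proof.
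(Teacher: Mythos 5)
Your proof is correct, but it organizes the argument differently from the paper, and the difference is worth noting. Both proofs share the same core mechanism: the identity $\lambda(\omega,n)=\int\mathcal{L}_{\Phi}^{\omega,n}1\,\mathrm{d}\widetilde\mu_{\sigma^n\omega}$, the upper bound $\mathcal{L}_{\Phi}^{\omega,n}1\le e^{V_n\Phi(\omega)}Z_{u,n}(\Phi,\omega)$ from \eqref{Vn}, and the lower bound obtained by grafting one connecting word of length $M(\sigma^n\omega)$ onto each $w'$ via topological mixing. Where you diverge is in how the limit is shown to exist and how the a.s.\ finite but unbounded variable $M$ is tamed. The paper proves two-sided bounds valid for \emph{every} $n$, which forces it to argue that $M(\sigma^n\omega)=o(n)$ via the return times $b_k$ to a level set $\{M\le M_0\}$ and the relation $\lim_k (b_{k+1}-b_k)/b_k=0$. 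You instead observe that $\log\lambda(\omega,n)$ is a Birkhoff sum of the integrable function $\log\lambda$ (your bound $|\log\lambda|\le\|\Phi(\omega)\|_\infty+\log l(\omega)$ is exactly right, using that each column of $A(\omega)$ is nonzero), so the a.s.\ limit $L$ exists for free; then the lower bound $L\ge P(\Phi)$ only needs to be checked along a positive-density subsequence $(n_j)$ on which $M(\sigma^{n_j}\omega)\le K$, where the error terms $S_{M}\|\Phi(\sigma^{n_j}\omega)\|_\infty$ and $\log\lambda(\sigma^{n_j}\omega,M)$ are sums of at most $K$ terms, each $o(n_j)$ because $f(\sigma^n\omega)/n\to0$ a.s.\ for $f\in L^1$. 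This buys you a cleaner treatment of $M$: you never need the (somewhat delicate) claim that $M(\sigma^n\omega)=o(n)$ for all $n$, only boundedness of $M$ along a subsequence of positive density, at the modest price of the extra (but elementary) integrability check for $\log\lambda$. The only blemishes are notational: in your formula for $\mathcal{L}_{\Phi}^{\omega,n}\mathbf{1}(\underline v)$ the summation index $v$ collides with the letters of $\underline v$, and the first step of the connecting word must be chosen compatibly with the last letter of $w'$ (using that every row of $A(\sigma^{n-1}\omega)$ is nonzero) before invoking positivity of $A(\sigma^{n}\omega)\cdots A(\sigma^{n+M-1}\omega)$; neither affects the validity of the argument.
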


\begin{remark} In the next proof, as well as in the rest of the paper, we will use the letter $M$ to denote the levels of the function $M(\cdot)$. Keeping this in mind should prevent from some confusion.
\end{remark}

\begin{proof}
	For $M>0$, let $F_M=\{\omega\in \Omega: M(\omega)\leq M\}$. Fix  $M$ large enough so that  $\mathbb{P}(F_{M})>0$. For each $\omega\in\Omega$, let $b_k(\omega)$ be the $k$-th return time of $\omega$ to the set $F_{M}$, i.e. $b_1=\inf\{n\in \N:\sigma^n\omega\in F_{M}\}$ and for $k>1$, $b_k=\inf\{n\in \N:\sigma^n\omega\in F_{M},n>b_{k-1}\}$. From ergodic theorem we get $\lim_{k\to\infty}\frac{b_k}{k}=\frac{1}{\mathbb{P}(F_M)}$ for $\mathbb{P}$-almost every  $\omega\in \Omega$. Then $\lim_{k\to\infty}\frac{b_{k+1}-b_k}{b_k}=0$ for $\mathbb{P}$-almost every  $\omega\in \Omega$, which implies that  $M(\sigma^n\omega)= o(n)$.
	
	Now we claim that for any $\vl \in \Sigma_{\sigma^n\omega}$ we have 
	$$
	Z_{u,n-M(\sigma^n\omega))}(\Phi,\omega)\exp(-o(n))\leq \mathcal{L}_{\Phi}^{\omega,n} 1(\vl)\leq Z_{u,n}(\Phi,\omega)\exp(o(n)).$$ The second  inequality uses the fact that we work with a subshift as well as \eqref{Vn}. We just prove the first  inequality: for $n$ large enough so that $M(\sigma^n\omega)\le n$, 
	\begin{eqnarray*}
		\mathcal{L}_{\Phi}^{\omega,n} 1(\vl) &=& \sum_{w\in \Sigma_{\omega,n},w\vl\in\Sigma_{\omega}}\exp(S_n\Phi(\omega,w\vl)) \\
		&\ge & \sum_{w'\in \Sigma_{\omega,n-M(\sigma^n\omega)}}\exp\big (S_{n-M(\sigma^n\omega)}(\omega,u_{\omega,n-M(\sigma^n\omega)}(w'))-o(n)\big )\\
		&=&Z_{u,n-M(\sigma^n\omega))}(\Phi,\omega)\exp(-o(n)).
	\end{eqnarray*}
	By using the topological mixing property and  preserving for each $w'\in \Sigma_{\omega,n-M(\sigma^n\omega)}$ only one path of length $M(\sigma^n\omega)$ from $w'$ to $\underline v$, the inequality follows from   (\ref{Vn}), $M(\sigma^n\omega)=o(n)$ and (\ref{Phi}).
	
	Now, since $\lambda(\omega,n)=\int\mathcal{L}_{\Phi}^{\omega,n} 1(\vl) d \widetilde \mu_{\sigma^n\omega}(\vl)$,  we can easily get the result from lemma \ref{converge} and the fact that  $M(\sigma^n\omega)=o(n)$.
	
\end{proof}

For each $\omega \in\Omega$, let $$D(\omega)=:\frac{1}{\lambda(\omega,M(\omega))}\exp(-S_{M(\omega)}\|\Phi(\omega)\|_{\infty}).$$

Then $D(\omega)>0$ for $\mathbb{P}$-almost every  $\omega\in \Omega$.
From $M(\sigma^n\omega)=o(n)$, \eqref{Phi} and lemma \ref{lambda} we can get that $\log D(\sigma^n\omega)=o(n)$ $\mathbb{P}$-almost surely.

Recall that by proposition~\ref{eigen}, for $\mathbb{P}$-a.e $\omega\in \Omega$, the measures $\widetilde\mu_\omega$ satisfy $(\mathcal{L}_{\Phi}^\omega)^*\widetilde\mu_{\sigma\omega}=\lambda(\omega)\widetilde \mu_\omega$.
\begin{proposition}\label{wgibbs}
	For $\mathbb{P}$-a.e $\omega\in\Omega$, for any $n\in\mathbb{N}$, for all $v=v_0v_1\dots v_{n-1}\in \Sigma_{\omega,n}$, one has
	$$\frac{D(\sigma^n\omega)}{\lambda(\omega,n)}\exp{(\inf_{\vl\in[v]_{\omega}}S_n\Phi(\omega,v))}\leq \widetilde \mu_{\omega}([v]_{\omega})\leq \frac{1}{\lambda(\omega,n)}\exp{(\sup_{\vl\in[v]_{\omega}}S_n\Phi(\omega,\vl))},$$
	so that
	$$\exp(-\epsilon_n n)\leq\frac{\widetilde \mu_{\omega}([v]_{\omega})}{\exp(S_n\Phi(\omega,\vl)-\log(\lambda(\omega,n)))}\leq \exp(\epsilon_n n)$$
	for any $\vl\in [v]_{\omega}$, where $\epsilon_n$ does not depend on $v$ and tends to 0 as $n\to\infty$.
\end{proposition}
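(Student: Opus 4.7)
The plan is to exploit the eigenvalue relation of proposition~\ref{eigen}, iterated $n$ times: $(\mathcal{L}_\Phi^{\omega,n})^*\widetilde\mu_{\sigma^n\omega}=\lambda(\omega,n)\widetilde\mu_\omega$, so that
$\widetilde\mu_\omega([v]_\omega)=\lambda(\omega,n)^{-1}\int \mathcal{L}_\Phi^{\omega,n}\mathbf 1_{[v]_\omega}(\underline y)\,\mathrm d\widetilde\mu_{\sigma^n\omega}(\underline y)$. The only admissible preimage of $\underline y\in\Sigma_{\sigma^n\omega}$ by $F^n_\omega$ that lies in $[v]_\omega$ is the concatenation $v\underline y$ when admissible, so $\mathcal{L}_\Phi^{\omega,n}\mathbf 1_{[v]_\omega}(\underline y)=\mathbf 1_{\{v\underline y\in\Sigma_\omega\}}\exp(S_n\Phi(\omega,v\underline y))$. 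The upper bound follows immediately by bounding this exponent by $\sup_{\underline v'\in[v]_\omega}S_n\Phi(\omega,\underline v')$ and using that $\widetilde\mu_{\sigma^n\omega}$ is a probability measure.

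For the lower bound I would pick a symbol $s^\star\in\Sigma_{\sigma^n\omega,1}$ with $A_{v_{n-1},s^\star}(\sigma^{n-1}\omega)=1$ (which exists since every row of $A(\sigma^{n-1}\omega)$ contains a non-zero entry); this ensures $[s^\star]_{\sigma^n\omega}\subset\{\underline y:v\underline y\in\Sigma_\omega\}$, so $\widetilde\mu_\omega([v]_\omega)\ge\lambda(\omega,n)^{-1}\exp\bigl(\inf_{\underline v'\in[v]_\omega}S_n\Phi(\omega,\underline v')\bigr)\,\widetilde\mu_{\sigma^n\omega}([s^\star]_{\sigma^n\omega})$. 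It then remains to show $\widetilde\mu_{\sigma^n\omega}([s^\star]_{\sigma^n\omega})\ge D(\sigma^n\omega)$. I would apply the eigenvalue relation again, iterated $M(\sigma^n\omega)$ times from $\sigma^n\omega$, and invoke topological mixing: for every $\underline z\in\Sigma_{\sigma^{n+M(\sigma^n\omega)}\omega}$ there exists at least one admissible word $s^\star u$ of length $M(\sigma^n\omega)$ with $s^\star u\underline z\in\Sigma_{\sigma^n\omega}$. Retaining only this preimage in $\mathcal{L}_\Phi^{\sigma^n\omega,M(\sigma^n\omega)}\mathbf 1_{[s^\star]_{\sigma^n\omega}}(\underline z)$ yields a value at least $\exp(-S_{M(\sigma^n\omega)}\|\Phi(\sigma^n\omega)\|_\infty)$, and integrating against $\widetilde\mu_{\sigma^{n+M(\sigma^n\omega)}\omega}$ produces exactly $D(\sigma^n\omega)$ by its definition.

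The refined estimate follows by noting that $|\sup_{[v]_\omega}S_n\Phi(\omega,\cdot)-S_n\Phi(\omega,\underline v)|$ and $|\inf_{[v]_\omega}S_n\Phi(\omega,\cdot)-S_n\Phi(\omega,\underline v)|$ are both bounded by $V_n\Phi(\omega)$, which is $o(n)$ $\mathbb{P}$-a.s.\ by \eqref{Vn}. Combined with $\log D(\sigma^n\omega)=o(n)$ $\mathbb{P}$-a.s.\ (established just above the proposition from lemma~\ref{lambda}, \eqref{Phi} and $M(\sigma^n\omega)=o(n)$), one can set $n\epsilon_n:=\max\bigl\{V_n\Phi(\omega),\,V_n\Phi(\omega)-\log D(\sigma^n\omega)\bigr\}$, which is independent of $v$ and tends to $0$. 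The only delicate point will be the lower bound on $\widetilde\mu_{\sigma^n\omega}([s^\star]_{\sigma^n\omega})$: the mixing assumption must be invoked at precisely the scale $M(\sigma^n\omega)$ so that the constant that emerges matches exactly the definition of $D(\sigma^n\omega)$ and not merely something comparable to it.
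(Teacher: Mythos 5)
Your proposal is correct and follows essentially the same route as the paper: the iterated eigen-relation $(\mathcal L_\Phi^{\omega,n})^*\widetilde\mu_{\sigma^n\omega}=\lambda(\omega,n)\widetilde\mu_\omega$ for the upper bound, and topological mixing over exactly $M(\sigma^n\omega)$ steps to produce the constant $D(\sigma^n\omega)$ for the lower bound, with \eqref{Vn} and $\log D(\sigma^n\omega)=o(n)$ giving the refined estimate. The only (immaterial) difference is that you factor the lower bound into two applications of the eigen-relation (first $n$ steps, then $M(\sigma^n\omega)$ steps to show $\widetilde\mu_{\sigma^n\omega}([s^\star])\ge D(\sigma^n\omega)$, which is precisely the paper's $n=1$ case applied at $\sigma^n\omega$), whereas the paper runs a single application over $n+M(\sigma^n\omega)$ steps and sums over the terminal symbol.
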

\begin{proof}
	Let us deal first with the case $n=1$.
	
	Fix $1\leq i\leq l(\omega)$. For any $1\leq j\leq l(\sigma^{M(\omega)}\omega)$, there exists $w\in\Sigma_{\sigma\omega,M(\omega)-1}$ such that $iwj\in\Sigma_{\omega,1+M(\omega)}$.
	Due to proposition~\ref{eigen}, we have  
	$$
	\widetilde\mu_{\omega}([iwj]_{\omega})=\frac{1}{\lambda(\omega,M(\omega))}\int \mathcal{L}_{\Phi}^{\omega,M(\omega)} 1_{[iwj]_{\omega}} \, \mathrm{d} \widetilde\mu_{\sigma^{M(\omega)}\omega},$$ 
	where $\mathcal{L}_{\Phi}^{\omega,n}=\mathcal{L}_{\Phi}^{\sigma^{n-1}\omega}\circ\dots\circ\mathcal{L}_{\Phi}^{\sigma\omega}\circ\mathcal{L}_{\Phi}^{\omega}$.
	This implies $$\widetilde\mu_{\omega}([iwj])\geq \frac{\inf_{\wl\in [iwj]_{\omega}}\exp(S_{M(\omega)}\Phi(\omega,\wl))}{\lambda(\omega,M(\omega))} \widetilde\mu_{\sigma^{M(\omega)}\omega}([j]_{\sigma^{M(\omega)}\omega}).$$ Then $\widetilde\mu_{\omega}([i])\geq D(\omega)$ follows after summing over $1\leq j\leq l(\sigma^{M(\omega)}\omega)$. The upper bound $\widetilde\mu_{\omega}([i])\le 1$ is obvious.
	
	The general case is achieved similarly: If $v\in \Sigma_{\omega,n}$, for each $1\le j\le l(\sigma^{n+M(\sigma^n\omega)-1}\omega)$,  there exists $w\in\Sigma_{\sigma^n\omega,M(\sigma^n\omega)-1}$ such that $vwj\in\Sigma_{\omega,n+M(\sigma^n\omega)}$. One has
	$$
	\widetilde\mu_{\omega}([vwj]_{\omega})=\frac{1}{\lambda(\omega,n) \lambda(\sigma^n\omega,M(\sigma^n\omega))}\int \mathcal{L}_{\Phi}^{\omega,n+M(\sigma^n\omega)} 1_{[vwj]_{\omega}} \,\mathrm{d} \widetilde\mu_{\sigma^{n+M(\sigma^n\omega)}\omega},
	$$
	from which  we get
	$$
	\widetilde\mu_{\omega}([vwj]_{\omega})\ge \frac{1}{\lambda(\omega,n)} \cdot D(\sigma^n\omega) \exp{(\inf_{\vl\in[v]_{\omega}}S_n\Phi(\omega,v))}  \widetilde\mu_{\sigma^{n+M(\sigma^n\omega)}\omega}([j]_{\sigma^{n+M(\sigma^n\omega)}\omega}).
	$$
	Then, taking the sum over  $1\le j\le l(\sigma^{n+M(\sigma^n\omega)-1}\omega)$ we get 
	$$\widetilde\mu_{\omega}([v]_{\omega})\geq\frac{D(\sigma^n\omega)}{\lambda(\omega,n)}\exp{(\inf_{\vl\in[v]_{\omega}}S_n\Phi(\omega,v))}.$$
	The inequality
	$
	\widetilde \mu_{\omega}([v]_{\omega})\leq \frac{1}{\lambda(\omega,n)}\exp{(\sup_{\vl\in[v]_{\omega}}S_n\Phi(\omega,\vl))}$ is direct from the equality $$\widetilde \mu_\omega([v]_\omega)=\frac{1}{\lambda(\omega,n)} \int  \mathcal{L}_{\Phi}^{\omega,n} 1_{[v]_{\omega}}\, \mathrm{d} \widetilde\mu_{\sigma^{n}\omega}.$$
	
	Finally we conclude with  (\ref{Vn}) and $\log D(\sigma^n\omega)=o(n)$.
\end{proof}

For any $\gamma\in \mathbb{L}^1_{X_{\Omega}}(\Omega,\widetilde{C}([0,1]))$ and any $z\in U_{\omega}^v$, let 
$$
S_n\gamma(\omega,z)=\sum_{i=0}^{n-1}\gamma(\sigma^{i}\omega,v_i,T_{\sigma^{i-1}\omega}^{v_{i-1}}\cdots T_{\omega}^{v_0}z).
$$ 

From the Lagrange's finite-increment theorem, distortions and proposition \ref{wgibbs}, using standard estimates we can get the following proposition. 

\begin{proposition}\label{for n}
	For $\mathbb{P}$-almost every  $\omega\in \Omega$, there are  positive sequences $(\epsilon(\psi,n))_{n\ge 0}$ and $(\epsilon(\phi,n))_{n\ge 0}$, that we also denote as $(\epsilon(\Psi,n))_{n\ge 0}$ and $(\epsilon(\Phi,n))_{n\ge 0}$, converging to 0 as $n\to +\infty$, such that for all $n\in \mathbb{N}$, for all $v=v_0v_1\dots v_{n}\in\Sigma_{\omega,n}$, we have :
	\begin{enumerate}
		\item  For all $z\in \widering{U}_{\omega}^v,$
		$$
		\exp(S_n\psi(\omega,z)-n\epsilon(\psi,n))\leq |U_{\omega}^v|\leq \exp(S_n\psi(\omega,z)+n\epsilon(\psi,n)),$$
		hence for all $\vl\in [v]_{\omega}$,
		$$\exp(S_n\Psi(\omega,\vl)-n\epsilon(\Psi,n))\leq |U_{\omega}^v|\leq \exp(S_n\Psi(\omega,\vl)+n\epsilon(\Psi,n)).$$ 
		Consequently, for all $\vl\in X_{\omega}^v$:
		$$
		|X_{\omega}^v|\leq|U_{\omega}^v|\leq \exp(S_n\Psi(\omega,\vl)+n\epsilon(\Psi,n)).$$

		\item For all $\vl\in[v]_{\omega}$,
		$$
		\exp(S_n\Phi(\omega,\vl)-n\epsilon(\Phi,n))\leq \widetilde\mu_{\omega}([v]_{\omega})\leq \exp(S_n\Phi(\omega,\vl)+n\epsilon(\Phi,n)),$$
		hence for all $z\in U_{\omega}^v$, 
		$$
		\exp(S_n\phi(\omega,z)-n\epsilon(\phi,n))\leq \mu_{\omega}(X_{\omega}^v)= \mu_{\omega}(U^v_\omega),$$ as well as $\mu_{\omega}(U^v_\omega)\le \exp(S_n\phi(\omega,z)+n\epsilon(\phi,n))$ if $\widetilde\mu_\omega$ is atomless.
	\end{enumerate}
\end{proposition}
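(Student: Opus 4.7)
The plan is to derive both statements from ingredients already gathered in the paper: the contraction in the mean \eqref{cpsi}, the vanishing of the continuity moduli $\mathrm{var}_n\Phi(\omega)$ and $\mathrm{var}(\psi,\omega,\varepsilon)$, Maker's ergodic theorem (as used to prove \eqref{Vn} and \eqref{Phi}), Lemma~\ref{lambda} together with the normalization $P(\Phi)=0$, the bound $\log D(\sigma^n\omega)=o(n)$, and the weak Gibbs estimate of Proposition~\ref{wgibbs}. The only real task is to promote pointwise Gibbs-type inequalities to two-sided bounds with an $o(n)$ error that is uniform in $v\in\Sigma_{\omega,n}$.

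For part~(1), set $g_\omega^v:=g_\omega^{v_0}\circ g_{\sigma\omega}^{v_1}\circ\cdots\circ g_{\sigma^{n-1}\omega}^{v_{n-1}}$, so that $U_\omega^v=g_\omega^v([0,1])$. The mean value theorem gives $\xi\in[0,1]$ with $|U_\omega^v|=|(g_\omega^v)'(\xi)|$, and the chain rule, combined with the identity $\log|(g_{\sigma^i\omega}^{v_i})'(y)|=\psi(\sigma^i\omega,v_i,g_{\sigma^i\omega}^{v_i}(y))$, yields
\[
\log|U_\omega^v|=\sum_{i=0}^{n-1}\psi\!\bigl(\sigma^i\omega,v_i,g_{\sigma^i\omega}^{v_i}(\eta_i)\bigr),\qquad \eta_i:=g_{\sigma^{i+1}\omega}^{v_{i+1}}\circ\cdots\circ g_{\sigma^{n-1}\omega}^{v_{n-1}}(\xi).
\]
For any $z\in\widering{U}_\omega^v$, both $g_{\sigma^i\omega}^{v_i}(\eta_i)$ and $T_{\sigma^{i-1}\omega}^{v_{i-1}}\cdots T_\omega^{v_0}(z)$ lie inside $U_{\sigma^i\omega}^{v_iv_{i+1}\cdots v_{n-1}}$, hence
\[
\bigl|\log|U_\omega^v|-S_n\psi(\omega,z)\bigr|\le\sum_{i=0}^{n-1}\mathrm{var}\!\bigl(\psi,\sigma^i\omega,L_{n-i}(\sigma^i\omega)\bigr),
\]
where $L_k(\omega):=\max_{w\in\Sigma_{\omega,k}}|U_\omega^w|$. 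The mean-contraction property forces $L_k(\omega)\to 0$ a.s. as $k\to\infty$, so $f_k(\omega):=\mathrm{var}(\psi,\omega,L_k(\omega))\to 0$ a.s. and is dominated by $2\|\psi(\omega)\|_\infty\in L^1$. Maker's ergodic theorem then gives $\frac{1}{n}\sum_{i=0}^{n-1}f_{n-i}(\sigma^i\omega)\to 0$ a.s., which defines $\epsilon(\psi,n)$ independent of $v$. The $\Psi$-version follows by taking $z=\pi_\omega(\vl)$ for $\vl\in[v]_\omega$ and using $\Psi(\sigma^i\omega,F_\omega^i\vl)=\psi(\sigma^i\omega,v_i,\pi_{\sigma^i\omega}(F_\omega^i\vl))$; the estimate involving $|X_\omega^v|\le|U_\omega^v|$ is then immediate since $X_\omega^v\subset U_\omega^v$.

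For part~(2), Proposition~\ref{wgibbs} gives
\[
\frac{D(\sigma^n\omega)}{\lambda(\omega,n)}\exp\!\Bigl(\inf_{\vl\in[v]_\omega}\!S_n\Phi(\omega,\vl)\Bigr)\le\widetilde\mu_\omega([v]_\omega)\le\frac{1}{\lambda(\omega,n)}\exp\!\Bigl(\sup_{\vl\in[v]_\omega}\!S_n\Phi(\omega,\vl)\Bigr).
\]
Lemma~\ref{lambda} with $P(\Phi)=0$ yields $\log\lambda(\omega,n)=o(n)$, $\log D(\sigma^n\omega)=o(n)$ was established just after that lemma, and the oscillation $\sup_{[v]_\omega}S_n\Phi-\inf_{[v]_\omega}S_n\Phi$ is at most $V_n\Phi(\omega)=o(n)$ by \eqref{Vn}; none of these three bounds depends on $v$. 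Summing them produces a single $\epsilon(\Phi,n)\to 0$ fitting the statement for every $\vl\in[v]_\omega$. The passage to $\mu_\omega$ uses that $\pi_\omega$ is at most two-to-one: the lower bound $\mu_\omega(U_\omega^v)\ge\widetilde\mu_\omega([v]_\omega)$ holds unconditionally, while $\mu_\omega(U_\omega^v)\le\widetilde\mu_\omega([v]_\omega)+\widetilde\mu_\omega(\{\wl:\pi_\omega(\wl)\in\partial U_\omega^v\})$ reduces to the announced upper bound as soon as $\widetilde\mu_\omega$ is atomless. One finally sets $\epsilon(\phi,n)=\epsilon(\Phi,n)$.

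The main obstacle is the uniformity of every error term over $v\in\Sigma_{\omega,n}$: since $|\Sigma_{\omega,n}|$ grows exponentially, any $v$-dependent control would be useless. This is precisely where the Maker-type devices already used by the authors to prove \eqref{Vn} come into play: the combination of the almost-sure shrinking of $L_k(\omega)$ (from the mean-contraction and Birkhoff's theorem) with $\mathrm{var}(\psi,\omega,\varepsilon)\to 0$ as $\varepsilon\to 0$ produces Cesàro averages that vanish independently of $v$, and the analogous $V_n\Phi$-bound does the same for the second part. Once this uniformity is secured, no further difficulty arises.
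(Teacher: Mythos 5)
Your proof is correct and follows exactly the route the paper intends: the paper dismisses this proposition with the one-line remark that it follows ``from the Lagrange's finite-increment theorem, distortions and proposition~\ref{wgibbs}, using standard estimates,'' and your argument supplies precisely those estimates --- the mean value theorem plus chain rule and a Maker-type Ces\`aro control of $\mathrm{var}(\psi,\sigma^i\omega,L_{n-i}(\sigma^i\omega))$ for part (1), and proposition~\ref{wgibbs} combined with $\log\lambda(\omega,n)=o(n)$, $\log D(\sigma^n\omega)=o(n)$ and \eqref{Vn} for part (2). The only cosmetic imprecision is that passing from $S_n\Phi(\omega,\vl)$ to $S_n\phi(\omega,z)$ for a general $z\in U_\omega^v$ needs the same spatial-variation term you built for $\psi$ (applied to $\phi$) rather than literally $\epsilon(\phi,n)=\epsilon(\Phi,n)$, but your part-(1) machinery already provides it.
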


\subsection{Properties of random Gibbs measures}
Random Gibbs measures are associated with  random H\"{o}lder continuous potentials. We say that a function $\Phi$ is a random H\"{o}lder potential if $\Phi$ is measurable from $\Sigma_{\Omega}$ to $\mathbb{R}$, 
$\int \sup_{\vl\in \Sigma_{\omega}}|\Phi(\omega,\vl)| d \mathbb{P}<\infty,$ 
and there exists $\kappa\in (0,1]$ such that
\begin{equation*}
\text{var}_n\Phi(\omega)\leq K_{\Phi}(\omega)e^{-\kappa n},
\end{equation*}
where the random variable $K_{\Phi}= K_\Phi(\omega) > 0$ is such that $\int\log K_\Phi (\omega)\mathrm{d}\mathbb{P}(\omega)<\infty$.
A random H\"{o}lder continuous potential is obviously in $\mathbb{L}^1_{\Sigma_{\Omega}}(\Omega,C(\Sigma))$.

\begin{theorem}[\cite{Kifer5,KL}]
	Assume that $\mathcal{F}$ is a countably generated $\sigma$-algebra, $F$ is a topological mixing subshift of finite type  and $\Phi$ a random H\"{o}lder potential.
	
	For $\mathbb{P}$-almost every  $\omega\in\Omega$,
	there exists some random variables $C=C^{\Phi}(\omega)>0$, $\lambda=\lambda^{\Phi}(\omega)>0$, a function $h=h(\omega)=h(\omega,\vl)>0$ and a measure $\widetilde\mu\in \mathcal{M}_\mathbb{P}^1(\Sigma_{\Omega})$ with disintegrations $\widetilde\mu_{\omega}$ satisfying
	$$\int |\log C^{\Phi}|\,\mathrm{d}\mathbb{P}<+\infty,\ \ \int |\log \lambda^{\Phi}|\,\mathrm{d}\mathbb{P}<+\infty\text{ and } \log h \text{ is a H\"{o}lder potential},$$
	and such that
	\begin{equation}\label{transfer operator}
	\mathcal{L}_{\Phi}^\omega h(\omega)=\lambda(\omega) h(\sigma\omega),\  (\mathcal{L}_{\Phi}^\omega)^*\widetilde\mu_{\sigma\omega}=\lambda(\omega)\widetilde\mu_{\omega},\int h(\omega)\,\mathrm{d}\widetilde\mu_{\omega}=1.
	\end{equation}
	
	Let $m_{\omega}=m^{\Phi}_{\omega}$ be given by $dm_{\omega}=h(\omega)d\widetilde\mu_{\omega}$ and set $dm(\omega,\vl)=d\widetilde\mu_{\omega}(\vl)d \mathbb{P}(\omega)$. Then $m\in \mathcal{I}_\mathbb{P}^1(\Sigma_{\Omega})$, and  for $\mathbb{P}$-almost every  $\omega\in \Omega$,
	for all $v=v_0v_1\dots v_{n-1}\in\Sigma_{\omega,n}$, and for all $\vl\in[v]_{\omega}$
	\begin{equation*}
	\frac{1}{C^{\Phi}}\leq\frac{m_{\omega}([v]_{\omega})}{\exp(\sum_{i=0}^{n-1}\Phi(F^i(\omega,\vl))-\log\lambda^{\Phi}(\sigma^{n-1}\omega)\cdot\dots
		\cdot\lambda^{\Phi}(\omega))}\leq C^{\Phi}.
	\end{equation*}
	The family of measures $(m_{\omega})_{\omega\in\Omega}$ is called a random (or relative) Gibbs measure  for the potential $\Phi$.
	Moreover, $m$ is the unique maximizing $F$-invariant probability measure in the variational principle, i.e. such that
	\begin{equation*}
	P(\Phi)=h_m(F)+\int \Phi\,\mathrm{d}m,\text{ and one has }P(\Phi)=\int \log \lambda(\omega)\,\mathrm{d}\mathbb{P}.
	\end{equation*}
	
	Each time we need to refer to the function $\Phi$, we denote the measures $m$ and $m_{\omega}$ as $m^{\Phi}$ and $m^{\Phi}_{\omega}$, and denote $\lambda$ as $\lambda^{\Phi}$.
\end{theorem}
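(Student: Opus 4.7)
The plan is to apply the random Ruelle-Perron-Frobenius machinery for topologically mixing random subshifts of finite type. The eigenvalue-eigenmeasure pair $(\lambda(\omega), \widetilde\mu_\omega)$ satisfying $(\mathcal{L}_\Phi^\omega)^* \widetilde\mu_{\sigma\omega} = \lambda(\omega)\widetilde\mu_\omega$ is already provided by Proposition~\ref{eigen} for potentials in $\mathbb{L}^1_{\Sigma_\Omega}(\Omega, C(\Sigma))$, so the work is to upgrade this picture with the extra H\"older regularity. One sets $\lambda(\omega) = \int \mathcal{L}_\Phi^\omega 1 \, \mathrm{d}\widetilde\mu_{\sigma\omega}$; the bound $\int |\log \lambda^\Phi|\, \mathrm{d}\mathbb{P} < \infty$ follows from $\int \|\Phi\|_\infty\,\mathrm{d}\mathbb{P} < \infty$ together with a lower bound on $\lambda(\omega)$ derived from topological mixing and the weak Gibbs estimate of Proposition~\ref{wgibbs}.

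Next, I would construct the positive eigenfunction $h(\omega, \vl)$ by a cone contraction argument in the projective Hilbert metric. On the random cone of positive functions whose logarithm is H\"older of exponent $\kappa$ with random aperture driven by $K_\Phi$, the assumption $\var_n\Phi(\omega) \le K_\Phi(\omega)e^{-\kappa n}$ together with topological mixing forces strict contraction of the normalized operator $\mathcal{L}_\Phi^\omega/\lambda(\omega)$ along blocks of length $M(\omega)$. One sets
\[
h(\omega, \vl) = \lim_{n\to\infty} \frac{\mathcal{L}_\Phi^{\sigma^{-n}\omega, n} 1 (\vl)}{\lambda(\sigma^{-n}\omega, n)},
\]
and measurability, integrability of $\log C^\Phi = \log(\sup h / \inf h)$, and the H\"older continuity of $\log h$ all follow from the tempered almost-sure growth (both of order $o(n)$) of $K_\Phi(\sigma^n\omega)$ and $M(\sigma^n\omega)$, which the contraction eventually dominates.

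Having $h$ and $\widetilde\mu_\omega$, I would normalize so that $\int h(\omega)\,\mathrm{d}\widetilde\mu_\omega = 1$, set $m_\omega = h(\omega)\,\widetilde\mu_\omega$, and verify $F$-invariance of $m$ from the two eigen-identities~\eqref{transfer operator}. The two-sided Gibbs estimate is obtained by writing
\[
m_\omega([v]_\omega) = \lambda(\omega,n)^{-1} \int \mathcal{L}_\Phi^{\omega, n}\big(1_{[v]_\omega}\, h(\omega)\big)\,\mathrm{d}\widetilde\mu_{\sigma^n\omega},
\]
using the true bounded distortion $\sup_{\vl \in [v]_\omega} S_n\Phi(\omega, \vl) - \inf_{\vl \in [v]_\omega} S_n\Phi(\omega,\vl) \le \sum_{j=0}^{n-1} K_\Phi(\sigma^j\omega)\, e^{-\kappa(n-j)}$, which is bounded in a tempered sense, together with the uniform two-sided bounds on $h$. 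The uniformity of the distortion constant $C^\Phi(\omega)$ in $v$ and $n$ is precisely the improvement over Proposition~\ref{wgibbs}.

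The main obstacle is the cone contraction in a random, non-compact setting where $l(\omega)$, $K_\Phi(\omega)$ and $M(\omega)$ are all unbounded: there is no uniform Birkhoff contraction constant, and the argument must be run along a measurable selection of return times to a level set $\{M \le M_0\}$ of positive $\mathbb{P}$-measure, exactly as in the proof of Lemma~\ref{lambda}. Once the cocycle is built, uniqueness of the equilibrium state in~\eqref{variation} follows from a classical argument: any competing maximizer $\rho$ inherits the same two-sided Gibbs bounds, hence $\rho_\omega$ is absolutely continuous with respect to $m_\omega$ with bounded Radon-Nikodym derivative, and $F$-ergodicity of $m$ forces $\rho = m$. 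Finally, the identity $P(\Phi) = \int \log \lambda\,\mathrm{d}\mathbb{P}$ is Lemma~\ref{lambda} applied to this H\"older potential, the uniform bounds on $h$ dropping out of the logarithmic limit.
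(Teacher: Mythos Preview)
The paper does not prove this theorem at all: it is stated with a citation to \cite{Kifer5,KL} and used as a black box, with no accompanying argument in the text. There is therefore nothing to compare your proposal against.

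That said, your outline is broadly the standard route taken in the cited references: construct $(\lambda,\widetilde\mu)$ as eigen-data of the dual transfer operator, obtain $h$ by a cone/Hilbert-metric contraction argument pushed through along return times to a set where $M(\omega)$ is bounded, and deduce the Gibbs bounds from bounded distortion plus two-sided bounds on $h$. One point to be careful with: your formula $h(\omega,\vl)=\lim_{n\to\infty}\mathcal{L}_\Phi^{\sigma^{-n}\omega,n}1(\vl)/\lambda(\sigma^{-n}\omega,n)$ presupposes that $\sigma$ is invertible, which the paper does not assume; in the non-invertible base one works instead on the natural extension or directly with the contraction of $\mathcal{L}_\Phi^{\omega,n}1/\|\mathcal{L}_\Phi^{\omega,n}1\|$ as $n\to\infty$ to produce $h(\omega)$. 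Also, your identification $C^\Phi(\omega)=\sup h(\omega)/\inf h(\omega)$ is too small: the Gibbs constant must also absorb the distortion $\sum_{j\ge 0}K_\Phi(\sigma^j\omega)e^{-\kappa j}$ (or its backward analogue), not just the oscillation of $h$. These are fixable details, and the overall strategy is the correct one from \cite{Kifer5,KL}.
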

We can also define a random Gibbs measure on the random  attractor $X_\omega$ by setting $\mu_{\omega}=m_\omega \circ \pi_\omega^{-1}.$


Given a random H\"{o}lder potential $\Phi$, from \eqref{transfer operator} we can  define the normalized potential $\Phi'(\omega,\vl)=\Phi(\omega,\vl)+\log h(\omega,\vl)-\log h(F(\omega,\vl))-\log \lambda(\omega)$, which satisfies $\mathcal{L}_{\Phi'}^\omega1=1$ for $\mathbb{P}$-almost every  $\omega\in \Omega$. This implies that $\Phi'\leq 0$ for $\mathbb{P}$-almost every $\omega\in \Omega$. Also,  we have the following fact:
\begin{proposition}\label{control phi}
	Suppose that $\Phi$ is a random H\"older potential.   If $P(\Phi)=0$, there exist some $\varpi>0$ such that for $\mathbb{P}$-almost every $\omega\in \Omega$, there exists $N(\omega)$ such that for any $n\geq N(\omega)$ and any $v\in \Sigma_{\omega,n}$, one has
	$$\sup_{\underline{v}\in[v]_{\omega}}S_{n}\Phi(\omega,\underline{v})\leq -n\varpi.$$
	As a consequence, $\mu_\omega$ is atomless.
\end{proposition}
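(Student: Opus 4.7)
I would pass to the normalized potential
$$
\Phi'(\omega,\vl):=\Phi(\omega,\vl)+\log h(\omega,\vl)-\log h(F(\omega,\vl))-\log\lambda(\omega),
$$
which satisfies $\mathcal L^{\omega}_{\Phi'}1=1$ and hence $\Phi'\le 0$ pointwise $\mathbb P$-a.s. Telescoping along the orbit,
$$
S_n\Phi(\omega,\vl)-S_n\Phi'(\omega,\vl)=\log h(F^n(\omega,\vl))-\log h(\omega,\vl)+\sum_{i=0}^{n-1}\log\lambda(\sigma^i\omega).
$$
The $\log h$-contribution is $o(n)$ uniformly in $\vl$ since $\|\log h(\omega)\|_\infty\in L^1(\mathbb P)$ gives $\|\log h(\sigma^n\omega)\|_\infty=o(n)$ by Birkhoff; the $\log\lambda$-contribution equals $nP(\Phi)+o(n)=o(n)$ because $P(\Phi)=0$ by Lemma~\ref{lambda}. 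It therefore suffices to produce $\varpi'>0$ such that $a_n(\omega):=\sup_{\vl\in\Sigma_\omega}S_n\Phi'(\omega,\vl)\le -\varpi' n$ for $n$ large, $\mathbb P$-a.s.

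\textbf{Sub-additive ergodic theorem.} From $S_{n+m}\Phi'(\omega,\vl)=S_n\Phi'(\omega,\vl)+S_m\Phi'(\sigma^n\omega,F^n\vl)$, taking sup over $\vl$ yields $a_{n+m}(\omega)\le a_n(\omega)+a_m(\sigma^n\omega)$; thus $(a_n)$ is a non-positive sub-additive cocycle with $|a_n|\le\sum_{i<n}\|\Phi'(\sigma^i\omega)\|_\infty\in L^1(\mathbb P)$. Kingman's theorem gives
$$
\alpha:=\lim_n\frac{a_n(\omega)}{n}=\inf_n\frac{1}{n}\int a_n\,\mathrm d\mathbb P\in[-\|\Phi'\|_\infty^{L^1},0],\quad\mathbb P\text{-a.s.}
$$

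\textbf{Strict negativity from topological mixing and $l\ge 2$.} I pick $M_0$ such that
$$
B:=\{\omega\in\Omega:M(\omega)\le M_0,\ l(\omega)\ge 2\}
$$
has $\mathbb P(B)>0$, which is possible since $M<\infty$ $\mathbb P$-a.s. and $\mathbb P(l\ge 2)>0$ by hypothesis. For any $\omega\in B$, the product $A(\omega)A(\sigma\omega)\cdots A(\sigma^{M_0-1}\omega)$ is positive of size $l(\omega)\times l(\sigma^{M_0}\omega)$, so every $\vl\in\Sigma_{\sigma^{M_0}\omega}$ admits at least $l(\omega)\ge 2$ preimages under $F_\omega^{M_0}$ in $\Sigma_\omega$. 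Using compactness of $\Sigma_\omega$ (closed subset of a product of finite alphabets) and continuity of $S_{M_0}\Phi'(\omega,\cdot)$ (random H\"older regularity), $a_{M_0}(\omega)$ is attained at some $\wl^*\in\Sigma_\omega$. If $a_{M_0}(\omega)=0$, then each of the $\le 0$ summands $\Phi'(\sigma^i\omega,F^i\wl^*)$ ($0\le i<M_0$) must vanish; thus $e^{\Phi'(\sigma^i\omega,F^i\wl^*)}=1$ in the identity $\sum_{\wl:F_{\sigma^i\omega}\wl=F^{i+1}\wl^*}e^{\Phi'(\sigma^i\omega,\wl)}=1$, forcing the other (strictly positive) summands to vanish, i.e.\ $F^i\wl^*$ to be the unique $F_{\sigma^i\omega}$-preimage of $F^{i+1}\wl^*$. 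Concatenating over $i$, $\wl^*$ would be the unique $F_\omega^{M_0}$-preimage of $F^{M_0}\wl^*$, contradicting the $\ge 2$ preimages noted above. Hence $a_{M_0}(\omega)<0$ for every $\omega\in B$. Since $a_{M_0}\le 0$ everywhere and $\mathbb P(\{a_{M_0}<0\})\ge \mathbb P(B)>0$, a standard measure-theoretic argument gives $\int a_{M_0}\,\mathrm d\mathbb P<0$, so $\alpha\le\frac{1}{M_0}\int a_{M_0}\,\mathrm d\mathbb P<0$. Any $\varpi\in(0,-\alpha)$ then works, after absorbing the $o(n)$ normalization error.

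\textbf{Consequence and main obstacle.} Atomlessness of $\mu_\omega=m_\omega\circ\pi_\omega^{-1}$ follows: an atom of $\mu_\omega$ would lift, through the at-most two-to-one map $\pi_\omega$, to an atom $\{\wl_0\}$ of $m_\omega$, but the H\"older Gibbs inequality together with the bound just obtained gives
$$
m_\omega(\{\wl_0\})\le m_\omega([\wl_0|_n]_\omega)\le C^\Phi(\omega)\exp\Bigl(\sup_{\vl\in[\wl_0|_n]_\omega}S_n\Phi(\omega,\vl)-\log\lambda(\omega,n)\Bigr)\le e^{-\varpi n/2}\to 0.
$$
The main conceptual obstacle is identifying the condition ``$\Phi'$ vanishes identically along the orbit'' with the uniqueness of preimages under $F_\omega^{M_0}$, which is precisely what topological mixing combined with $\mathbb P(l\ge 2)>0$ rules out on a set of positive $\mathbb P$-measure; technically, one must also verify carefully the $o(n)$ control on $\log h$ and $\log\lambda$ to pass between $\Phi$ and $\Phi'$ uniformly in $\vl$.
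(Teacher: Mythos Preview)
Your argument is correct. Both your proof and the paper's hinge on the same core observation: on the set $B=\{\omega:M(\omega)\le M_0,\ l(\omega)\ge 2\}$ of positive probability, the normalized potential satisfies $\sup_{\vl}S_{M_0}\Phi'(\omega,\vl)<0$, because $\mathcal L^{\omega,M_0}_{\Phi'}1=1$ forces each exponential summand to be strictly less than $1$ once there are at least two preimages. From there the two proofs diverge.

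You apply Kingman's subadditive ergodic theorem directly to $a_n(\omega)=\sup_{\vl}S_n\Phi'(\omega,\vl)$, obtain a deterministic limit $\alpha\le\frac{1}{M_0}\int a_{M_0}\,\mathrm d\mathbb P<0$, and transfer back to $\Phi$ via the $o(n)$ normalization error. The paper instead uses the variational principle to deduce $\sup_{\rho\in\mathcal I_{\mathbb P}}\int\Phi\,\mathrm d\rho=-2\varpi<0$ and then argues by contradiction: if words with $\sup S_{n_k}\Phi/n_k>-\varpi$ persisted along a subsequence, one would get $P(q\Phi)\ge -q\varpi$ for all $q\ge 0$, which for large $q$ contradicts the upper bound $P(q\Phi)\le -2q\varpi+\int\log l\,\mathrm d\mathbb P$ coming from the variational principle. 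Your route is more self-contained and avoids the $q\to\infty$ pressure trick and the variational principle altogether; the paper's route stays entirely within the thermodynamic-formalism machinery that pervades the rest of the article. Your detailed justification that ``$S_{M_0}\Phi'(\omega,\wl^*)=0$ forces uniqueness of the $F_\omega^{M_0}$-preimage'' makes explicit what the paper states in one line.
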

If we need to refer explicitly to $\Phi$, we will use the notations $N_{\Phi}(\omega)$ and $\varpi_{\Phi}$  instead of $N(\omega)$ and $\varpi$.

The main idea of the proof is from \cite{FS}.
\begin{proof}
	Since $P(\Phi)=0$, we have $\sup\{\int \Phi\,\mathrm{d}\rho:\rho\in\mathcal{I}_\mathbb{P}(\Sigma_{\Omega})\}\leq 0$.
	
	We claim that $\sup\{\int \Phi\,\mathrm{d}\rho:\rho\in\mathcal{I}_\mathbb{P}(\Sigma_{\Omega})\}<0$.
	Let $M$ large enough such that $\mathbb{P}(\{\omega:M(\omega)<M, l(\omega)\ge 2\})>0$. For any $\omega\in\Omega$ such that $M(\omega)<M$ and $l(\omega)\ge 2$ we have $\mathcal{L}_{\Phi'}^{\omega,M}1=1$, hence $S_M{\Phi'}(\omega,\vl)<0$ for any $\vl\in \Sigma_{\omega}$ and $\int S_M{\Phi'}(\omega,\vl) d \rho_{\omega}<0$ for any probability measure $\rho_{\omega}$ on $\Sigma_{\omega}$. Since, moreover, we have  $S_M{\Phi'}\leq 0$, we conclude  that $\sup\{\int {\Phi'}\,\mathrm{d}\rho,\rho\in\mathcal{I}_\mathbb{P}(\Sigma_{\Omega})\}<0.$
	
	Let $-2\varpi:=\sup\{\int \Phi\,\mathrm{d}\rho:\rho\in\mathcal{M}_\mathbb{P}^1(\Sigma_{\Omega},F)\}$.  If the conclusion of the proposition does not hold, there exists a subsequence $(n_k)_{k\geq 1}$ such that $\#\{v: |v|=n_k, \frac{\sup_{\vl\in[v]_{\omega}}S_{n_k}\Phi(\omega,\vl)}{n_k}> -\varpi\}\geq 1$, hence for any $q\in \mathbb{R}^+$,
	\begin{eqnarray*}
		P(q\Phi) &=& \lim_{k\to\infty}\frac{\log\sum_{v\in\Sigma_{\omega,n_k}}\exp\big (qS_{n_k}\Phi(\omega,u_{n_k,\omega}(v))\big )}{n_k} \\
		&\geq&  \lim_{k\to\infty}\frac{-q\varpi n_k}{n_k}=-q\varpi.
	\end{eqnarray*}
	However,
	\begin{eqnarray*}
		P(q\Phi) &=& \sup_{\rho\in\mathcal{M}_\mathbb{P}^1(\Sigma_{\Omega},F)}\left \{h_{\rho}(F)+\int q\Phi\,\mathrm{d}\rho\right \} \\
		&\leq& \sup_{\rho\in\mathcal{M}_\mathbb{P}^1(\Sigma_{\Omega},F)}\left\{\int q\Phi\,\mathrm{d}\rho\right \}+\sup_{\rho\in\mathcal{M}_\mathbb{P}^1(\Sigma_{\Omega},F)}\{h_{\rho}(F)\}\\
		&\leq& -2q\varpi+\sup_{\rho\in\mathcal{M}_\mathbb{P}^1(\Sigma_{\Omega},F)}\{h_{\rho}(F)\}
	\end{eqnarray*}
	Since $\sup_{\rho\in\mathcal{M}_\mathbb{P}^1(\Sigma_{\Omega},F)}\{h_{\rho}(F)\}=\int\log l \,\mathrm{d}\mathbb{P}<\infty$, letting $q$ tend to $\infty$  we get a contradiction.
\end{proof}

\subsection[Approximation of $(\Phi,\Psi)$ and related properties]{Approximation of $(\Phi,\Psi)$ by random H\"{o}lder potentials, and related properties }

We mainly  introduce objects and related properties  which will be used in the following sections. Also, we explain the variational formulas appearing in the statement of theorem~\ref{multifractal initial}.

\subsubsection{Approximation of $(\Phi,\Psi)$ by random H\"{o}lder potentials}
Now we approximate the potentials $\Phi$ and $\Psi$ associated with $\{\mu_\omega\}_{\omega\in\Omega}$ and $\{X_\omega\}_{\omega\in\Omega}$ by more regular potentials: for any $i\ge 1$, for any $\omega\in \Omega$ for any $\vl=v_0v_1\cdots v_i\cdots\in [v]_{\omega}\subset \Sigma_{\omega}$ with $v=v_0v_1\cdots v_{i-1}\in \Sigma_{\omega,i-1}$ define
$$\Phi_i(\omega,\vl)=\frac{\max\{\Phi(\omega,\wl),\wl\in [v]_{\omega}\}+\min\{\Phi(\omega,\wl),\wl\in [v]_{\omega}\}}{2},$$
$$\Psi_i(\omega,\vl)=\frac{\max\{\Psi(\omega,\wl),\wl\in [v]_{\omega}\}+\min\{\Psi(\omega,\wl),\wl\in [v]_{\omega}\}}{2}.$$

These functions  $\Phi_i$ and $\Psi_i$ are piecewise constant with respect to the second variable. They are random H\"{o}lder continuous potentials. If we take 
$$
K_{\Phi_i}(\omega)=(2\sup_{\vl\in \Sigma_{\omega}}|\Phi(\omega,\vl)|+1)e^i\quad\text{and}\quad \kappa=1, 
$$ 
then
$$
\var_{n}\Phi_i(\omega)\begin{cases}\leq 
2\sup_{\vl\in \Sigma_{\omega}}|\Phi(\omega,\vl)|\leq K_{\Phi_i}(\omega)\exp(-n)& \text{if }n\leq i\\
=0&\text{if }n> i
\end{cases}.
$$
Furthermore, $$\log ((2\sup_{\vl\in \Sigma_{\omega}}|\Phi(\omega,\vl)|+1)e^i)\leq i+2\sup_{\vl\in \Sigma_{\omega}}|\Phi(\omega,\vl)|, $$ and the right hand side  is integrable since $\Phi\in
\mathbb{L}^1_{\Sigma_{\Omega}}(\Omega,C(\Sigma))$. Also, since for $\mathbb{P}$-almost every  $\omega$ we have  $\var_n\Phi(\omega)\to 0$ as $n\to +\infty$, and $\|\Phi(\omega)-\Phi_i(\omega)\|_\infty\le \mathrm{var}_i\Phi(\omega)$, we have  $\Phi_i\to \Phi$ uniformly as $i\to \infty$ for $\mathbb{P}$-almost every  $\omega$. The same property holds for $\Psi_i$ and $\Psi$. Consequently, without loss of generality we can also assume that $P(\Phi_i)=0$ since $P(\Phi_i)$ converges to $P(\Phi)$ as $i$ tends to $+\infty$. 

\subsubsection{Approximation of ($T,T^*$) by ($T_i,T_i^*$)}
Due to our assumptions on $(\Phi,\Psi)$ and the definition of $(\Phi_i,\Psi_i)_{i\in\N}$, we have $c_{\Psi_i}>0$, hence for the same reason as for $(\Phi,\Psi)$, for any $q\in \mathbb{R}$, for any $i\in\N$,  there exists a unique $T_i(q)$ such that $P(q\Phi_i-T_i(q)\Psi_i)=0$ and the function $T_i$ is concave and non-decreasing. Also, the function $T_i$ is differentiable since for H\"{o}lder potentials the associated random Gibbs measure is the unique invariant measure that maximizes the variation principle (see \cite{Gundlach,Kifer5,MSU}.)
\begin{lemma}\label{converge T}
	For any $q\in \mathbb{R}$, one has that  $T_i(q)\to T(q)$ as $i\to \infty$.
\end{lemma}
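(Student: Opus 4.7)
The strategy is to exploit two ingredients: continuity of the topological pressure under $\mathbb{L}^1_{\Sigma_\Omega}(\Omega,C(\Sigma))$ convergence of potentials, together with strict monotonicity, with slope bounded below by $c_\psi$, of the maps $t\mapsto P(q\Phi-t\Psi)$ and $t\mapsto P(q\Phi_i-t\Psi_i)$. Combined with the defining equations $P(q\Phi-T(q)\Psi)=0=P(q\Phi_i-T_i(q)\Psi_i)$, these will pin down the limit.

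First I would establish the Lipschitz-type inequality
$$|P(\Phi^{(1)})-P(\Phi^{(2)})|\le \int \|\Phi^{(1)}(\omega)-\Phi^{(2)}(\omega)\|_\infty\,\mathrm{d}\mathbb{P}(\omega)$$
for any $\Phi^{(1)},\Phi^{(2)}\in \mathbb{L}^1_{\Sigma_\Omega}(\Omega,C(\Sigma))$. This follows from the variational principle (lemma~\ref{converge}) together with the fact that any $\rho\in\mathcal{I}_{\mathbb{P}}(\Sigma_\Omega)$ projects to $\mathbb{P}$, so $|\int \Phi^{(1)}\,\mathrm{d}\rho-\int \Phi^{(2)}\,\mathrm{d}\rho|\le \int \|\Phi^{(1)}(\omega)-\Phi^{(2)}(\omega)\|_\infty\,\mathrm{d}\mathbb{P}$. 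Applied to $\Phi^{(1)}=q\Phi_i-t\Psi_i$ and $\Phi^{(2)}=q\Phi-t\Psi$, combined with $\|\Phi_i(\omega)-\Phi(\omega)\|_\infty\le \mathrm{var}_i \Phi(\omega)\to 0$ $\mathbb{P}$-a.s., $\|\Psi_i(\omega)-\Psi(\omega)\|_\infty\le \mathrm{var}_i\Psi(\omega)\to 0$ $\mathbb{P}$-a.s., and the $L^1$ domination $\|\Phi_i(\omega)-\Phi(\omega)\|_\infty\le 2\|\Phi(\omega)\|_\infty$ (similarly for $\Psi$), the dominated convergence theorem yields
$$P(q\Phi_i-t\Psi_i)\longrightarrow P(q\Phi-t\Psi)\quad\text{as }i\to\infty,\ \text{for every fixed }q,t\in\mathbb{R}.$$

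Next, by construction $\Psi_i(\omega,\vl)\le \max_{\wl\in[v]_\omega}\Psi(\omega,\wl)\le \sup_{s,x}\psi(\omega,s,x)$, so for every $\rho\in\mathcal{I}_{\mathbb{P}}(\Sigma_\Omega)$, $\int \Psi_i\,\mathrm{d}\rho\le -c_\psi$, and likewise $\int \Psi\,\mathrm{d}\rho\le -c_\psi$. Plugging an almost-maximizer for $t_1$ into the variational formula for $t_2>t_1$ gives
$$P(q\Phi_i-t_2\Psi_i)-P(q\Phi_i-t_1\Psi_i)\ge c_\psi(t_2-t_1),$$
and the same for $\Phi,\Psi$. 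In particular, $P(q\Phi-(T(q)\pm\varepsilon)\Psi)$ has sign $\pm$ with absolute value at least $c_\psi\varepsilon$ for every $\varepsilon>0$. Applying pressure continuity at the two fixed parameters $t=T(q)\pm\varepsilon$, for $i$ large enough one has $P(q\Phi_i-(T(q)-\varepsilon)\Psi_i)<0<P(q\Phi_i-(T(q)+\varepsilon)\Psi_i)$, and by strict monotonicity of $t\mapsto P(q\Phi_i-t\Psi_i)$ (which vanishes only at $T_i(q)$) this forces $T(q)-\varepsilon<T_i(q)<T(q)+\varepsilon$. Letting $\varepsilon\to 0$ yields $T_i(q)\to T(q)$.

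The only delicate point is the pressure-Lipschitz inequality; but once lemma~\ref{converge} is available it reduces to an application of the variational principle, so no genuinely new difficulty arises. The remaining steps are just a bracketing argument exploiting the uniform lower bound $c_\psi>0$ on the slope, which is what makes $T_i(q)$ stable under perturbations of the potentials.
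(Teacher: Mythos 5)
Your proof is correct, and it rests on exactly the same two pillars as the paper's: the variational principle of lemma~\ref{converge} (used to compare pressures of nearby potentials via $|\int(\Phi^{(1)}-\Phi^{(2)})\,\mathrm{d}\rho|\le\int\|\Phi^{(1)}(\omega)-\Phi^{(2)}(\omega)\|_\infty\,\mathrm{d}\mathbb{P}$, since every $\rho\in\mathcal I_{\mathbb P}(\Sigma_\Omega)$ projects to $\mathbb P$) and the uniform negativity $\int\Psi_i\,\mathrm{d}\rho\le -c_\psi$ coming from \eqref{cpsi}. The packaging differs, though. The paper subtracts the two identities $P(q\Phi-T(q)\Psi)=P(q\Phi_i-T_i(q)\Psi_i)=0$ directly, obtains $\inf_\rho\le 0\le\sup_\rho$ for the difference of the integrands, and then solves algebraically for $T(q)-T_i(q)$ using near-extremizers, dividing by $\int\Psi_i\,\mathrm{d}\rho$ (bounded away from $0$); this yields an explicit rate of convergence in terms of $\int\var_i\Phi\,\mathrm{d}\mathbb{P}$ and $\int\var_i\Psi\,\mathrm{d}\mathbb{P}$, at the cost of some sign bookkeeping. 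You instead isolate two reusable facts -- Lipschitz continuity of $P$ in the $L^1(\Omega,C(\Sigma))$ norm of the potential, and strict monotonicity of $t\mapsto P(q\Phi_i-t\Psi_i)$ with slope uniformly bounded below by $c_\psi$ in $i$ -- and conclude by bracketing the unique zero. This is cleaner and more modular (the same two facts also justify the existence and uniqueness of $T(q)$ and $T_i(q)$ in the first place), though it gives only qualitative convergence rather than the quantitative bound implicit in the paper's computation. All of your intermediate claims check out: $\|\Phi_i(\omega)-\Phi(\omega)\|_\infty\le\var_i\Phi(\omega)$ with domination by $2\|\Phi(\omega)\|_\infty$ legitimizes the dominated-convergence step, and $\Psi_i\le\sup_{s,x}\psi(\omega,s,\cdot)$ gives the slope bound for every $i$, not just large $i$ as in the paper's $-c_\Psi/2$ bound.
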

\begin{proof}
	At first, we recall that for any $\Phi\in \mathbb{L}^1_{\Sigma_{\Omega}}(\Omega,C(\Sigma))$  one has 
	$$P(\Phi)=\sup_{\rho\in\mathcal{I}_{\mathbb{P}}(\Sigma_\Omega)}\{h_{\rho}(F)+\int \Phi \, \mathrm{d}\rho\}.$$
	Also, for any $q\in \mathbb{R}$, we have $P(q\Phi-T(q)\Psi)=P(q\Phi_i-T_i(q)\Psi_i)=0$. Thus
	\begin{equation}\label{firstineq}
	\inf_{\rho\in\mathcal{I}_{\mathbb{P}}}\left (\int \big [q(\Phi-\Phi_i)-T(q)(\Psi-\Psi_i)-(T(q)-T_i(q))\Psi_i \big ]\, \mathrm{d}\rho\right )\leq 0,
	\end{equation}
	and
	\begin{equation}\label{secondineq}
	\sup_{\rho\in\mathcal{I}_{\mathbb{P}}}\left (\int \big [q(\Phi-\Phi_i)-T(q)(\Psi-\Psi_i)-(T(q)-T_i(q))\Psi_i\big  ]\, \mathrm{d}\rho\right )\geq 0.
	\end{equation}
	The inequality \eqref{firstineq} implies that for any $\varepsilon>0$, there exists a measure $\rho\in\mathcal{I}_{\mathbb{P}}(\Sigma_\Omega)$ such that
	$$
	\int \big [q(\Phi-\Phi_i)-T(q)(\Psi-\Psi_i)-(T(q)-T_i(q))\Psi_i  \big ]\, \mathrm{d}\rho \leq \varepsilon.$$
	Then
	$$\int (T(q)-T_i(q))\Psi_i \, \mathrm{d}\rho\geq \int q(\Phi-\Phi_i)-T(q)(\Psi-\Psi_i)\, \mathrm{d} \rho-\varepsilon,$$
	and
	\begin{eqnarray*}
		&&(T(q)-T_i(q)) \leq  \frac{\int q(\Phi-\Phi_i)-T(q)(\Psi-\Psi_i)d \rho-\varepsilon}{\int (\Psi_i)\, \mathrm{d}\rho}\\
		&&\leq  \frac{\int -|q|(\var_{i}\Phi)-|T(q)|(\var_i\Psi)\, \mathrm{d} \mathbb{P}-\varepsilon}{\int (\Psi_i)\, \mathrm{d}\rho}
		\leq  \frac{\int -|q|(\var_{i}\Phi)-|T(q)|(\var_i\Psi)\, \mathrm{d}\mathbb{P}-\varepsilon}{-c_{\Psi}/2}
	\end{eqnarray*}
	since $\int (\Psi_i)\, \mathrm{d}\rho\leq \frac{-c_{\Psi}}{2}<0$ for $i$ large enough.	Letting $i\to \infty$, from the arbitrariness of $\varepsilon$  we get $$\liminf_{i\to \infty} T_i(q)\geq T(q).$$
	Using \eqref{secondineq} similarly we can get $\limsup_{i\to \infty}T_i(q)\leq T(q).$ Finally $\lim_{i\to \infty} T_i(q)= T(q).$
	
\end{proof}
\begin{lemma}\label{convergent T*}
	Let $\widetilde T:\R\to\R$ be a concave function. Suppose that $(\widetilde T_i)_{i\ge 1}$ is a sequence of differentiable concave functions from $\R$ to $\R$ which converges pointwise to $\widetilde T$. Then $(\widetilde T_i^*)_{i\ge 1}$ converges pointwise to $\widetilde T^*$ over the interior of the domain of $\widetilde T^*$.
\end{lemma}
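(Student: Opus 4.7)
\medskip

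\noindent\textbf{Proof plan.} The plan rests on two classical observations. First, pointwise convergence of a sequence of real-valued concave functions on $\R$ to a real-valued concave limit is automatically locally uniform on $\R$. Second, when $d$ lies strictly in the interior of $\{d : \widetilde T^*(d) > -\infty\}$, which equals $(\widetilde T'(+\infty), \widetilde T'(-\infty))$, the infimum defining $\widetilde T^*(d)$ (and $\widetilde T_i^*(d)$ for $i$ large) can be localised to a fixed compact interval in $q$. Once these two ingredients are in place, the conclusion is immediate.

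Fix such a $d$ and choose $d', d''$ with $\widetilde T'(+\infty) < d' < d < d'' < \widetilde T'(-\infty)$. Using the monotonicity of difference quotients of a concave function, I pick $Q_1 < Q_2$ such that
\begin{equation*}
\widetilde T(Q_2+1) - \widetilde T(Q_2) < d' \qquad \text{and} \qquad \widetilde T(Q_1) - \widetilde T(Q_1-1) > d''.
\end{equation*}
Pointwise convergence of $\widetilde T_i$ at the four nodes $Q_1-1, Q_1, Q_2, Q_2+1$ transfers these strict inequalities to $\widetilde T_i$ for every $i$ large enough. Concavity then forces $\widetilde T_i'(q) \leq d'$ for all $q \geq Q_2+1$ and $\widetilde T_i'(q) \geq d''$ for all $q \leq Q_1-1$. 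Consequently $q \mapsto dq - \widetilde T_i(q)$ is increasing on $[Q_2+1, +\infty)$ and decreasing on $(-\infty, Q_1-1]$, so its infimum over $\R$ coincides with its infimum over $[Q_1-1, Q_2+1]$. The same localisation applies to $\widetilde T$ itself.

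From the local uniform convergence of $\widetilde T_i$ to $\widetilde T$ on the compact interval $[Q_1-1, Q_2+1]$, I conclude
\begin{equation*}
\widetilde T_i^*(d) = \inf_{q \in [Q_1-1,\, Q_2+1]}(dq - \widetilde T_i(q)) \xrightarrow[i\to\infty]{} \inf_{q \in [Q_1-1,\, Q_2+1]}(dq - \widetilde T(q)) = \widetilde T^*(d).
\end{equation*}
The delicate point of the argument is the compact localisation in the middle step, where the strict interiority of $d$ is essential; without it, minimising sequences could escape to $\pm\infty$ and local uniform convergence alone would not suffice. Everything else reduces to routine facts about concave functions.
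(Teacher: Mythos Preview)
Your proof is correct and takes a genuinely different route from the paper's. The paper argues by first invoking \cite[Proposition 2.5(i)]{Feng1} to produce, for each large $i$, a point $q_i$ with $\widetilde T_i'(q_i)=\alpha$, so that $\widetilde T_i^*(\alpha)=\alpha q_i-\widetilde T_i(q_i)$; it then performs a case analysis according to whether $(q_i)$ converges in $\R$ or diverges to $\pm\infty$, ruling out divergence by a contradiction with the pointwise convergence of $\widetilde T_i$. Your argument bypasses both the external reference and the case split: by exploiting the strict interiority of $d$ through the auxiliary slopes $d'<d<d''$, you pin down a \emph{single} compact interval $[Q_1-1,Q_2+1]$ on which the infima defining both $\widetilde T^*(d)$ and $\widetilde T_i^*(d)$ are attained for all large $i$, after which locally uniform convergence of concave functions finishes the job. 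This is cleaner and more self-contained. One trade-off worth noting: the paper's proof has the byproduct that for every $d$ in the interior one can actually find $q_i$ with $\widetilde T_i'(q_i)=d$ for $i$ large, a fact the paper explicitly reuses later (see the opening of Section~\ref{subsection:Simultaneous control of auxiliary  random measures}). Your proof does not yield this directly, though it is of course easy to recover from your localisation (the convex function $q\mapsto dq-\widetilde T_i(q)$ attains its minimum in the interior of $[Q_1-1,Q_2+1]$, where its derivative must vanish).
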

\begin{proof}
	Let $\alpha$ be an interior point of $\mathrm{dom}(\widetilde T^*)$. Let $q_\alpha\in \R$ be the unique point such that  $\alpha\in[\widetilde T'(q_\alpha+),\widetilde T'(q_\alpha-)]$, and $\widetilde T^*(\alpha)=\alpha q_\alpha-\widetilde T(q_\alpha)$.
	
	By \cite[proposition 2.5(i)]{Feng1}, there exists a sequence $(q_i)_{i\ge 1}$ such that for $i$ large enough one has $\widetilde T_i'(q_i)=\alpha$. Without loss of generality we can assume that this sequence converges to $q'_0\in \R$ or diverges to $-\infty$ or $\infty$.
	
	Suppose first that it converges to $q'_0\in\R$. If $q'_0=q_\alpha$ then we are done since $(\widetilde T_i)_{i\ge 1}$ converges uniformly on compact sets. Suppose that  $q'_0\neq q_\alpha$ and $q'_0>q_\alpha$. Using the uniform convergence of $(\widetilde T_i)_{i\ge 1}$ in a compact neighborhood of $ [q_\alpha,q'_0]$ and the inequality $\widetilde T_i(q)\le\widetilde  T_i(q_i)+\widetilde T'_i(q_i)(q-q_i)$ ($\widetilde T_i$ is concave), we can get $\widetilde T(q_\alpha)\le \widetilde T(q'_0)+\alpha(q_\alpha-q'_0)$. On the other hand, $T$ being concave we have $\widetilde T(q_\alpha)+\widetilde T'(q_\alpha+)(q'_0-q_\alpha)\ge  \widetilde T(q'_0)$ and $\widetilde T'(q_\alpha+)\le \alpha$. This implies that $\alpha={\widetilde T}'(q_\alpha+)$ hence $\widetilde T^*(\alpha)=\alpha q_\alpha-\widetilde T(q_\alpha)= \alpha q'_0-\widetilde \widetilde T(q'_0)=\lim_{i\to\infty} (\alpha q_i -\widetilde T_i(q_i)=\widetilde T_i^*(\alpha))$.
	
	The case  $q'_0\neq q_\alpha$ and $q'_0<q_\alpha$ is similar. Now suppose that $(q_i)_{i\ge 1}$ diverges to $\infty$ (the case where it diverges to $-\infty$ is similar). If $\widetilde T$ is affine over $[q_\alpha,\infty)$ with slope $\alpha$,  $\alpha$ is not an interior point of $\mathrm{dom}(\widetilde T^*)$. Consequently, there exists $q'_0$ and $\epsilon>0$ such that $\widetilde T'(q'_0+)<\alpha-\epsilon$, and $\widetilde T(q)\le \widetilde T(q'_0)+(\alpha-\epsilon)(q-q'_0)$ for all $q\ge q'_0$. On the other hand, since $\widetilde T'_i$ is non increasing for all $i$, for $i$ large enough we have $\widetilde T_i(q)\ge \widetilde T_i(q'_0)+\alpha (q-q'_0)$ for all $q\in[q'_0,q_i]$. Since $(q_i)_{i\ge 1}$ diverges to $\infty$, this contradicts the convergence of $(\widetilde T_i)_{i\ge 1}$ to $\widetilde T$.
\end{proof}

\subsubsection{Explanation of some variational formulas in theorem~\ref{multifractal initial}}

\begin{lemma}\label{pro:for T}
	For any $q\in\R$ we have  
	\begin{equation}\label{T(q)}
	T(q)=\min_{\rho\in\mathcal{I}_{\mathbb{P}}(\Sigma_\Omega)}\left\{\frac{h_\rho(F)+q \int \Phi \mathrm{d}\rho}{\int \Psi \mathrm{d}\rho}\right\}.
	\end{equation}
	Furthermore, for any $d\in [T'(+\infty),T'(-\infty)]$ we have 	
	\begin{equation}\label{T^{*}(d)}
	T^{*}(d)=\max_{\rho\in\mathcal{I}_{\mathbb{P}}(\Sigma_\Omega)}\left \{-\frac{h_\rho(F)}{\int \Psi \mathrm{d}\rho}: \dfrac{\int \Phi \mathrm{d}\rho}{\int \Psi \mathrm{d}\rho}=d \right\}.
	\end{equation}
\end{lemma}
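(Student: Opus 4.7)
The plan is to combine the variational principle for the topological pressure (Lemma~\ref{converge}) with the Hölder approximation $(\Phi_i,\Psi_i)$ of Section~3.3.1, exploiting that for random Hölder potentials the supremum in the variational principle is attained by a unique random Gibbs measure. For the identity $T(q)=\inf_\rho \frac{h_\rho(F)+q\int\Phi\,\mathrm{d}\rho}{\int\Psi\,\mathrm{d}\rho}$, the $\le$ direction follows from $P(q\Phi-T(q)\Psi)=0$: for every $\rho\in\mathcal{I}_{\mathbb{P}}(\Sigma_\Omega)$ one has $h_\rho+q\int\Phi\,\mathrm{d}\rho-T(q)\int\Psi\,\mathrm{d}\rho\le 0$, and dividing by $\int\Psi\,\mathrm{d}\rho\le -c_\psi<0$ (a consequence of (\ref{cpsi}) and the ergodic theorem) flips the inequality to the desired form. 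For the $\ge$ direction, for each $\epsilon>0$ pick $\rho_\epsilon$ realizing the supremum within $\epsilon$, divide by $-\int\Psi\,\mathrm{d}\rho_\epsilon\ge c_\psi$, and let $\epsilon\to 0$.

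To upgrade the infimum to a minimum, let $m_i$ denote the unique random Gibbs measure associated to the Hölder potential $q\Phi_i-T_i(q)\Psi_i$; by the thermodynamic formalism recalled in Section~3.2, it realizes $T_i(q)=\frac{h_{m_i}+q\int\Phi_i\,\mathrm{d}m_i}{\int\Psi_i\,\mathrm{d}m_i}$. Extract a subsequential weak limit $m_i\to \rho^*\in\mathcal{I}_{\mathbb{P}}(\Sigma_\Omega)$ (using compactness over the measurable sets $\{\omega:l(\omega),M(\omega)\le M_0\}$ of positive probability and a diagonal argument to exhaust $\Omega$). The uniform convergences $\Phi_i\to\Phi$, $\Psi_i\to\Psi$ together with Lemma~\ref{converge T} yield $\int\Phi_i\,\mathrm{d}m_i\to\int\Phi\,\mathrm{d}\rho^*$, $\int\Psi_i\,\mathrm{d}m_i\to\int\Psi\,\mathrm{d}\rho^*$, and $h_{m_i}=T_i(q)\int\Psi_i\,\mathrm{d}m_i-q\int\Phi_i\,\mathrm{d}m_i\to T(q)\int\Psi\,\mathrm{d}\rho^*-q\int\Phi\,\mathrm{d}\rho^*$. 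Upper semicontinuity of the fiber entropy in this random framework (cf.~\cite{Kifer1,KL}) then gives $h_{\rho^*}\ge \limsup h_{m_i}$; dividing by the negative $\int\Psi\,\mathrm{d}\rho^*$ this becomes $\frac{h_{\rho^*}+q\int\Phi\,\mathrm{d}\rho^*}{\int\Psi\,\mathrm{d}\rho^*}\le T(q)$, so $\rho^*$ attains the minimum.

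For (\ref{T^{*}(d)}), the $\le$ direction follows from (\ref{T(q)}): for any $\rho$ with $\int\Phi\,\mathrm{d}\rho/\int\Psi\,\mathrm{d}\rho=d$, one has $T(q)\le\frac{h_\rho}{\int\Psi\,\mathrm{d}\rho}+qd$, hence $-h_\rho/\int\Psi\,\mathrm{d}\rho\le qd-T(q)$ for every $q$, and taking the infimum gives $-h_\rho/\int\Psi\,\mathrm{d}\rho\le T^*(d)$. For equality, choose $q_d$ with $T^*(d)=q_d d-T(q_d)$ and, for each $i$ large enough, $q_d^{(i)}$ with $T_i'(q_d^{(i)})=d$; such $q_d^{(i)}$ exist by differentiability of $T_i$, and $q_d^{(i)}\to q_d$ by the proof of Lemma~\ref{convergent T*} (with the boundary values of $[T'(+\infty),T'(-\infty)]$ handled by allowing $q_d^{(i)}\to\pm\infty$ and taking appropriate limits). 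The equilibrium state $m_i$ at parameter $q_d^{(i)}$ satisfies the derivative formula $T_i'(q_d^{(i)})=\int\Phi_i\,\mathrm{d}m_i/\int\Psi_i\,\mathrm{d}m_i=d$; repeating the weak-limit/upper-semicontinuity argument of the previous paragraph produces a limit $\rho^*$ which satisfies the constraint $\int\Phi\,\mathrm{d}\rho^*/\int\Psi\,\mathrm{d}\rho^*=d$ and realizes the minimum in (\ref{T(q)}) at $q_d$, so that $T(q_d)=\frac{h_{\rho^*}+q_d\int\Phi\,\mathrm{d}\rho^*}{\int\Psi\,\mathrm{d}\rho^*}$. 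Solving for $h_{\rho^*}$ and using the constraint yields $-h_{\rho^*}/\int\Psi\,\mathrm{d}\rho^*=q_d d-T(q_d)=T^*(d)$.

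The main obstacle is the rigorous justification of the weak compactness of $\mathcal{I}_{\mathbb{P}}(\Sigma_\Omega)$ and the upper semicontinuity of the fiber entropy when $l$ and $M$ are unbounded; this is where the exhaustion by measurable sets of bounded $l$ and $M$ and the random thermodynamic formalism of~\cite{Kifer1,KL,Bog,MSU} are crucial. A secondary technical point is the handling of boundary values of $d$ in $[T'(+\infty),T'(-\infty)]$, where the associated $q_d$ may be infinite.
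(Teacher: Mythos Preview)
Your approach is essentially the same as the paper's: both derive the inequality $T(q)\le\inf_\rho\{\cdots\}$ directly from $P(q\Phi-T(q)\Psi)=0$ and the variational principle, and both obtain the reverse inequality together with attainment by passing to weak limits of the random Gibbs equilibrium states for the H\"older approximants $q\Phi_i-T_i(q)\Psi_i$, invoking compactness of $\mathcal{I}_{\mathbb P}(\Sigma_\Omega)$ and upper semicontinuity of the fiber entropy. The paper is in fact terser on~\eqref{T(q)}, essentially treating the ``$\min$'' as an immediate consequence of the variational principle once compactness is cited.

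Two places where your write-up should be tightened to match the paper's rigor. First, for $d$ on the boundary of $[T'(+\infty),T'(-\infty)]$: your plan of letting $q_d^{(i)}\to\pm\infty$ is problematic because Lemma~\ref{convergent T*} guarantees $T_i^*(d)\to T^*(d)$ only for \emph{interior} $d$, so you cannot directly conclude $h_{m_i}\to -T^*(d)\int\Psi\,\mathrm{d}\rho^*$. The paper instead first establishes~\eqref{T^{*}(d)} for interior $d$, then approximates a boundary $d$ by interior $d_k\to d$, takes $\rho_k$ realizing the maximum at each $d_k$, and passes to a weak$^*$ limit, using compactness of $\mathcal{I}_{\mathbb P}(\Sigma_\Omega)$ together with the continuity of $T^*$ at the boundary (lower semicontinuity as a concave function, upper semicontinuity as a Legendre transform). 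Second, on compactness and upper semicontinuity: the paper simply cites \cite{Kifer4,KL} for the weak$^*$ compactness of $\mathcal{I}_{\mathbb P}(\Sigma_\Omega)$; your exhaustion by sets $\{l,M\le M_0\}$ is unnecessary and would require extra care to ensure the limit measure remains $F$-invariant and projects to $\mathbb P$.
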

\begin{proof}
	The proof of~\eqref{T(q)} is just use the fact of the variation principle, see \eqref{variation}. 
	Regarding  the equation~\eqref{T^{*}(d)}, on the one hand, for any $d\in\R$, 
	\begin{eqnarray*}
		T^*(d)&=&\inf_{q\in \R}\{qd-T(q)\}=
		\inf_{q\in \R}\left\{qd-\inf_{\rho\in\mathcal{I}_{\mathbb{P}}(\Sigma_\Omega)}\left\{\frac{h_\rho(F)+q \int \Phi \mathrm{d}\rho}{\int \Psi \mathrm{d}\rho}\right\}\right\}\\
		&=&\inf_{q\in \R}\left\{\sup_{\rho\in\mathcal{I}_{\mathbb{P}}(\Sigma_\Omega)}\left\{\frac{-h_\rho(F)-q \int \Phi \mathrm{d}\rho}{\int \Psi \mathrm{d}\rho}+qd\right\}\right\}\\
		&=&\inf_{q\in \R}\left\{\sup_{\rho\in\mathcal{I}_{\mathbb{P}}(\Sigma_\Omega)}\left\{\frac{-h_\rho(F)}{\int \Psi \mathrm{d}\rho}+q\left(d-\frac{\int \Phi \mathrm{d}\rho}{\int \Psi \mathrm{d}\rho}\right)\right\}\right\}\\
		&\geq&\sup_{\rho\in\mathcal{I}_{\mathbb{P}}(\Sigma_\Omega)}\left\{\inf_{q\in \R}\left\{\frac{-h_\rho(F)}{\int \Psi \mathrm{d}\rho}+q\left(d-\frac{\int \Phi \mathrm{d}\rho}{\int \Psi \mathrm{d}\rho}\right)\right\}\right\}\\
		&=&\sup_{\rho\in\mathcal{I}_{\mathbb{P}}(\Sigma_\Omega)}\left\{\frac{-h_\rho(F)}{\int \Psi \mathrm{d}\rho}: \frac{\int \Phi \mathrm{d}\rho}{\int \Psi \mathrm{d}\rho}=d\right\}.
	\end{eqnarray*}
	
	On the other hand, for any $d\in (T'(+\infty),T'(-\infty))$, by the proof of lemma~\ref{convergent T*}, there exists $i$ large enough and $q_i\in \R$ such that $T_i'(q_i)=d$ and 
	$P(q_i\Phi_i-T_i(q_i)\Psi_i)=0$.
	Then there exists $\rho_i\in \mathcal{I}_{\mathbb{P}}(\Sigma_\Omega)$ such that
	$h_{\rho_i}(F)+\int (q_i\Phi_i-T_i(q_i)\Psi_i) \mathrm{d}\rho=0$
	and 
	$T_i'(q_i)=d=\frac{\int \Phi_i \mathrm{d}\rho_i}{\int \Psi_i \mathrm{d}\rho_i}.$
	
	This implies that there exists $\rho_i\in \mathcal{I}_{\mathbb{P}}(\Sigma_\Omega)$ such that
	$\frac{h_{\rho_i}(F)}{\int \Psi_i \mathrm{d}\rho_i}=T^*_i(d)$
	and 
	$d=\frac{\int \Phi_i \mathrm{d}\rho_i}{\int \Psi_i \mathrm{d}\rho_i}.$
	Lemma~\ref{convergent T*} tells us that $T^*_i(d)\to T^*(d)$ as $i\to \infty$ and $\mathcal{I}_{\mathbb{P}}(\Sigma_\Omega)$ is compact for the weak* topology. Thus,  there exists  a limit point $\rho'$ of $(\rho_i)$ in $ \mathcal{I}_{\mathbb{P}}(\Sigma_\Omega) $ such that 
	$\frac{h_{\rho'}(F)}{\int \Psi \mathrm{d}\rho'}\geq T^*(d)
	\text{ and } 
	d=\frac{\int \Phi \mathrm{d}\rho'}{\int \Psi\mathrm{d}\rho'},$
	since the entropy map is upper semi-continuous and  $(\Phi_i,\Psi_i)$ converges uniformly to~$(\Phi,\Psi)$. Finally, we get 
	$$T^{*}(d)=\max_{\rho\in\mathcal{I}_{\mathbb{P}}(\Sigma_\Omega)}\left \{-\frac{h_\rho(F)}{\int \Psi \mathrm{d}\rho}: \dfrac{\int \Phi \mathrm{d}\rho}{\int \Psi \mathrm{d}\rho}=d \right\}.$$
	
	The case $d\in\{T'(+\infty),T'(-\infty)\}$ now follows by approximating $d$ by a sequence $(d_k)_{k\ge 0}$ of elements of $(T'(+\infty),T'(-\infty))$ and for each $k$ picking $\rho_k$ which realizes $\displaystyle \max_{\rho\in\mathcal{I}_{\mathbb{P}}(\Sigma_\Omega)}\left \{-\frac{h_\rho(F)}{\int \Psi \mathrm{d}\rho}: \dfrac{\int \Phi \mathrm{d}\rho}{\int \Psi \mathrm{d}\rho}=d_k \right\}$. Then, since $T^*$ is continuous at $d$ (it is lower semi-continuous as a concave function and upper semi-continuous as a Legendre transform), any limit point of $(\rho_k)_{k\ge 0}$ is such that  $\displaystyle -\frac{h_\rho(F)}{\int \Psi \mathrm{d}\rho}=T^*(d)$ and $\dfrac{\int \Phi \mathrm{d}\rho}{\int \Psi \mathrm{d}\rho}=d$.  It exists since $\mathcal{I}_{\mathbb{P}}(\Sigma_\Omega)$ is compact in the weak* topology (see \cite{Kifer4,KL}). 
\end{proof}

\subsubsection{Simultaneous control for random Gibbs measures associated with $(\Phi_i,\Psi_i)$}\label{subsection:Simultaneous control of auxiliary  random measures}

In this quite technical subsection, we prepare the ``concatenation of random Gibbs measures'' approach that will be used in the next sections to construct auxiliary measures with nice properties. We also show an almost everywhere almost doubling property for the random Gibbs measures on the random attractor $X_\omega$.

Let $D$ be a dense and countable subset of $(T'(+\infty),T'(-\infty))$. Let $\{D_i\}_{i\in \mathbb{N}}$ be an increasing sequence of finite sets such that $\cup_{i\in \mathbb{N}}D_i=D$.

Fix a sequence  $\{\varepsilon_{i}\}_{i\in \mathbb{N}}$ decreasing to 0 as $i\to \infty$.
Due to lemma \ref{converge T} and the proof of Lemma~\ref{convergent T*},  for any $i\in \mathbb{N}$ there exists $j_i$ large enough such that for any $d\in D_i$, there exists $q_i=q_i(d)\in \mathbb{R}$ such that the following properties hold:
\begin{enumerate}
	\item $T'_{j_i}(q_i)=d$, $|T_{j_i}^*(d)-T^*(d)|\leq \varepsilon_i$.
	\item $\int_{\Omega} \var_{j_i}\Phi\ d\mathbb{P}\leq \varepsilon^3_i$ and
	$\int_{\Omega} \var_{j_i}\Psi\ d\mathbb{P}\leq \varepsilon^3_i.$
\end{enumerate}

We can also assume that $j_{i+1}>j_i$ for each $i\in\mathbb{N}$.
We set 
$$
Q_i=\{q_{i}:d_i\in D_i\}.
$$

For any $q\in Q_i$, we define 
$
\Lambda_{i,q}=q\Phi_{j_i}-T_{j_i}(q)\Psi_{j_i}.
$
For the convenience of writing, we will denote $(\Phi_{j_i}, \Psi_{j_i}, T_{j_i})$ by $(\Phi_{i}, \Psi_{i}, T_{i})$, so that  
$$\Lambda_{i,q}=q\Phi_{i}-T_{i}(q)\Psi_{i}.$$
We also define $\var'_{i}\Phi(\omega):=\var_{j_i}\Phi(\omega)$. 

\medskip

Recall proposition~\ref{for n}. For each $i\geq 1$, for any $\epsilon_i\in (0,1)$, there exist positive integers $L, C, M_i, N_i$ (large enough) and $\varpi_i>0$ such that there exist a set $\Omega(i)$ and  a sequence $\{c_{i,n}\}_{n\geq 1}$ decreasing to $0$ as $n\to \infty$ such that:
$\mathbb{P}(\Omega(i))>1-\epsilon_i/4$, and for any $\omega\in\Omega(i)$, one has:
\begin{itemize}
	
	\item $M(\omega)<M_i,\ l(\omega)\leq L$,
	\item  for any $n\ge 1$, 
	$$\max (V_{n}\Phi(\omega), V_{n}\Psi(\omega))\leq nc_{i,n}$$
	and
	$$\max\{\epsilon(\psi,n)=\epsilon(\Psi,n), \epsilon(\phi,n)=\epsilon(\Phi,n)\}\leq c_{i,n},$$
	where we have used Egorov's theorem;
	\medskip
	
	\item for all $n\geq N$, for $\Upsilon\in\{\Phi,\Psi\}$ 
	\begin{eqnarray*}
		\left|S_n \var'_{i}\Upsilon(\omega)-n\int_{\Omega} \var'_{i}\Upsilon(\omega)\ d\mathbb{P}\right| &\leq & nc_{i,n},\\
		\max\left(\frac{1}{n}S_n\|\Upsilon(\omega)\|_{\infty},\ \frac{1}{n}S_n\|\Upsilon(\sigma^{-n+1}\omega)\|_{\infty}\right) &\leq & C,
	\end{eqnarray*}
	$$
	\sup_{\vl\in [v]_{\omega}} S_n\Psi(\omega,\vl)\leq (-n\varpi_{\Psi}), \  \forall v\in \Sigma_{\omega,n},
	$$
	and
	$$\left |\frac{1}{n}S_n (\log l)(\omega)\right |\leq C,$$
	where we have applied ergodic theorem and Egorov's theorem. 
	\item  for any $q\in Q_i$, the measure $\{\tilde{\mu}_{\sigma^{M}\omega}^{\Lambda_{i,q}}\}$ is  well defined, and for any $n\geq N_i$
	$$V_{n}\Lambda_{i,q}(\sigma^{M}\omega)\leq nc_{i,n} \text{ and }
	\epsilon(\Lambda_{i,q},n)\leq c_{i,n};$$ 
	$$
	\sup_{\vl\in [v]_{\sigma^{M}\omega}} S_n \Lambda_{i,q}(\sigma^{M}\omega,\vl)\leq (-n\varpi_{i}).
	$$
\end{itemize}

Let $\theta'(i,\omega,s)$ be the $s$-th return time of the point $\omega$ to the set $\Omega(i)$ under the map $\sigma$, that is 
$$\theta'(i,\omega,1)=\inf\{n\in\mathbb{N}\cup\{0\}:\sigma^n\omega\in\Omega(i)\},$$
and for any $s\in \mathbb{N}$ and $s\geq 2$,
$$\theta'(i,\omega,s)=\inf\{n\in\mathbb{N}:n>\theta'(i,\omega,s-1),\ \sigma^n\omega\in\Omega(i)\}.$$ 

Then for any $i\in\mathbb{N}$, from the ergodic theorem, for $\mathbb{P}$-almost every  $\omega$
$$\lim_{s\to \infty}\frac{\theta'(i,\omega,s)}{s}=\frac{1}{\mathbb{P}(\Omega(i))}.$$
Consequently,
\begin{equation*}
\lim_{k\to \infty}{\frac{\theta'(i,\omega,s)-\theta'(i,\omega,s-1)}{\theta'(i,\omega,s)}}=0.
\end{equation*}
Since $\mathbb{N}$ is countable, there exists $\widetilde\Omega'\subset\Omega$ of full probability such that  for all $\omega\in\widetilde\Omega'$, for any $i\in \mathbb{N}$, we have 
$$\lim_{s\to\infty}\frac{\theta'(i,\omega,s)}{s}=\frac{1}{\mathbb{P}(\Omega(i))},$$
hence
$$\lim_{s\to\infty}\frac{\theta'(i,\omega,s)-\theta'(i,\omega,s-1)}{\theta'(i,\omega,s-1)}=0.$$

Given $\omega\in \Omega(i)$, let
$$
n^i_1(\omega)=\inf\{\theta'(i,\omega,s):\theta'(i,\omega,s)\geq M(i)\}-M(i).
$$ 
For $k\geq 2$, define $n^i_k(\omega)=\theta'(i,\omega,s_k)-M(i)$, where $s_k$ is the smallest $s$ such that the following property holds:
\begin{equation*}\label{}
\theta'(i,\omega,s)-n^i_{k-1}(\omega)\geq \max\big ( M(i), {n^i_{k-1}(\omega)}(c_{i,n^i_{k-1}})^{\frac{1}{3}}+\sqrt{\theta'(i,\omega,s)}).
\end{equation*}

It is easily seen that $$\lim_{k\to \infty}{\frac{n^i_k(\omega)-n^i_{k-1}(\omega)}{n^i_{k-1}(\omega)}}=0.$$

Now we prove an almost everywhere almost doubling property for the Gibbs measures $\mu^{\Lambda_{i,q}}_\omega$.

For $v \in \Sigma_{\sigma^{M(i)}\omega,n}$, we denote by $U_{\sigma^{M(i)}\omega}^{v+}$ and $U_{\sigma^{M(i)}\omega}^{v-}$  the two intervals of the $n$-th generation of the construction of $X_\omega$ which are neighboring~$U_{\sigma^{M(i)}\omega}^{v}$, whenever $U_{\sigma^{M(i)}\omega}^{v}$ is neither the leftmost nor the rightmost of the whole collection, and with the convention  that $U_{\sigma^{M(i)}\omega}^{v-}$ is on the left of $U_{\sigma^{M(i)}\omega}^{v}$. 

We say that $B(i,\sigma^{M(i)}\omega,k,v)$ holds if
$v \in \Sigma_{\sigma^{M(i)}\omega,n^i_k}$, and $|v\wedge v+|\leq n^i_{k-1}$ or $|v\wedge v-|\leq n^i_{k-1}$.

Let
$$
\mathcal U(i,\sigma^{M(i)}\omega,k)=\bigcup_{v\in \Sigma_{\sigma^{M(i)}\omega,n^i_k}:\,B(i,\sigma^{M(i)}\omega,k,v)\text{ holds}} U_{\sigma^{M(i)}\omega}^v.
$$

\begin{lemma}\label{control} For all  $i\in\N$,  for all $\omega\in\Omega(i)$, for all  $q\in Q_i$, we have $$\mu^{\Lambda_{i,q}}_{\sigma^{M(i)}\omega}(\bigcap_{N=1}\bigcup_{k\geq N}\mathcal U(i,\sigma^{M(i)}\omega,k))=0.$$
\end{lemma}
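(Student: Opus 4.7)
The plan is to invoke the Borel--Cantelli lemma: writing $\omega_0 = \sigma^{M(i)}\omega$, it suffices to show that
\[
\sum_{k \geq 1} \mu^{\Lambda_{i,q}}_{\omega_0}\bigl(\mathcal U(i, \omega_0, k)\bigr) < \infty.
\]
First I would rephrase the condition $B(i,\omega_0, k, v)$ in spatial terms. If $v \in \Sigma_{\omega_0, n^i_k}$ and $|v\wedge v+| \leq n^i_{k-1}$, then $v$ and its right-neighbor $v+$ diverge at some coordinate $\leq n^i_{k-1}$; inspecting the spatial ordering of the cylinders forces $v$ to be the rightmost level-$n^i_k$ descendant of its level-$(n^i_{k-1}+1)$ ancestor, and symmetrically for $v-$. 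Since $\sigma^{n^i_{k-1}}\omega_0 = \sigma^{\theta'(i,\omega,s_{k-1})}\omega \in \Omega(i)$, one has $l(\sigma^{n^i_{k-1}}\omega_0) \leq L$, so each level-$n^i_{k-1}$ cylinder $[w]_{\omega_0}$ contains at most $2L$ level-$n^i_k$ cylinders lying in $\mathcal U(i, \omega_0, k)$.

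Next I would bound the conditional Gibbs mass of such a boundary descendant. Set $m = n^i_k - n^i_{k-1}$ and $\omega^* = \sigma^{n^i_{k-1}}\omega_0 \in \Omega(i)$. For a level-$n^i_{k-1}$ parent $[w]$ and a descendant $v = w v'$ with $v' \in \Sigma_{\omega^*, m}$, the two-sided Gibbs estimate for the random H\"older potential $\Lambda_{i,q}$ (recall $P(\Lambda_{i,q})=0$) yields
\[
\frac{\widetilde\mu^{\Lambda_{i,q}}_{\omega_0}([v])}{\widetilde\mu^{\Lambda_{i,q}}_{\omega_0}([w])} \leq (C^{\Lambda_{i,q}})^2 \, \frac{\exp\bigl(S_m \Lambda_{i,q}(\omega^*, v')\bigr)}{\lambda^{\Lambda_{i,q}}(\omega^*, m)}.
\]
Because $\omega^* \in \Omega(i)$, the design properties listed in Section~3.3.4 give $\sup_{v'} S_m\Lambda_{i,q}(\omega^*, v') \leq -m\varpi_i$ as soon as $m \geq N_i$, while Lemma~\ref{lambda} combined with an Egorov-type uniformity baked into $\Omega(i)$ forces $|\log \lambda^{\Lambda_{i,q}}(\omega^*, m)| \leq m c_{i,m}$. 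Choosing $k$ large enough that $c_{i,m} \leq \varpi_i/2$ then bounds the displayed ratio by $(C^{\Lambda_{i,q}})^2 e^{-m\varpi_i/2}$, uniformly in $[w]$ and in the choice of $v'$.

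Summing over level-$n^i_{k-1}$ parents and using that $\widetilde\mu^{\Lambda_{i,q}}$ is atomless by Proposition~\ref{control phi} (so $\mu^{\Lambda_{i,q}}_{\omega_0}(U_{\omega_0}^v) = \widetilde\mu^{\Lambda_{i,q}}_{\omega_0}([v]_{\omega_0})$), I obtain
\[
\mu^{\Lambda_{i,q}}_{\omega_0}\bigl(\mathcal U(i, \omega_0, k)\bigr) \leq 2L(C^{\Lambda_{i,q}})^2 e^{-m\varpi_i/2}.
\]
The definition of $s_k$ imposes $m \geq \sqrt{\theta'(i,\omega,s_k)} \geq \sqrt{n^i_k}$, which iterates into $n^i_k \gtrsim (k/2)^2$, and therefore the tail series converges. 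Borel--Cantelli then delivers the claim.

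The main difficulty is mobilizing both the Bowen-type decay $\sup_{v'} S_m\Lambda_{i,q}(\omega^*, v') \leq -m\varpi_i$ and the eigenvalue control $|\log\lambda^{\Lambda_{i,q}}(\omega^*,m)| \leq m c_{i,m}$ at the \emph{shifted} base point $\omega^*$: both are uniform only on $\Omega(i)$, which is exactly why $n^i_{k-1}+M(i)=\theta'(i,\omega,s_{k-1})$ is designed to be a return time to $\Omega(i)$. The auxiliary lower bound $n^i_{k-1}(c_{i,n^i_{k-1}})^{1/3}$ in the choice of $s_k$ plays a complementary role, guaranteeing that $m$ is large enough for the vanishing drifts $c_{i,m}$ to be absorbed by $m\varpi_i/2$.
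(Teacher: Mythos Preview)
Your argument matches the paper's: both bound $\mu^{\Lambda_{i,q}}_{\omega_0}(\mathcal U(i,\omega_0,k))$ by an exponentially decaying term in $n^i_k-n^i_{k-1}$ via the Gibbs property and the $\varpi_i$ control, then conclude with Borel--Cantelli using $n^i_k-n^i_{k-1}\ge\sqrt{n^i_k}$. The only cosmetic differences are that the paper invokes the weak-Gibbs error $\epsilon(\Lambda_{i,q},n)\le c_{i,n}$ at $\omega_0$ rather than the H\"older Gibbs constant $C^{\Lambda_{i,q}}$, and it counts only $2$ boundary descendants per level-$n^i_{k-1}$ parent whereas your $2L$ is the correct (and harmless) bound under the $\le$ condition as stated.
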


\begin{proof}
	For any $v$ and $v'$ such that $|v|=n^i_{k-1},|v'|=n^i_k-n^i_{k-1}$ and $vv'\in \Sigma_{\sigma^{M(i)}\omega,n^i_k}$, by construction of  $\mu^{\Lambda_{i,q}}_{\sigma^{M(i)}\omega}$ one has 
	$$\frac{\mu^{\Lambda_{i,q}}_{\sigma^{M(i)}\omega}(U_{\sigma^{M(i)}\omega}^{vv'})}{\mu^{\Lambda_{i,q}}_{\sigma^{M(i)}\omega}(U_{\sigma^{M(i)}\omega}^{v})}\leq \exp(-(n^i_k-n^i_{k-1})\varpi_i+4n^i_kc_{i,n^i_k}).$$
	We will use this fact to estimate the measure of $\mathcal U (i,\sigma^{M(i)}\omega,k)$.
	Notice that for any $v\in \Sigma_{\sigma^{M(i)}\omega,n^i_{k-1}}$, there are at most two $v'$ such that $vv'\in \Sigma_{\sigma^{M(i)}\omega,n^i_k}$ and  $B(i,\sigma^{M(i)}\omega,k,vv')$ holds. Consequently,
	\begin{equation*}
	\mu^{\Lambda_{i,q}}_{\sigma^{M(i)}\omega}(\mathcal U(i,\sigma^{M(i)}\omega,k))\leq 2\exp(-(n^i_k-n^i_{k-1})\varpi_i+4n^i_kc_{i,n^i_k}).
	\end{equation*}
	Since $\varpi_i>0$ and $n^i_k-n^i_{k-1}>n^i_{k-1}(c_{n^i_k})^{1/3}+\sqrt{n^i_k},$
	we get
	$$\sum_{k=1}^{\infty}\mu^{\Lambda_{i,q}}_{\sigma^{M(i)}\omega}(\mathcal U(i,\sigma^{M(i)}\omega,k))<+\infty.$$
	By Borel-Cantelli's lemma we get $\mu^{\Lambda_{i,q}}_{\sigma^{M(i)}\omega}(\bigcap_{N=1}\bigcup_{k\geq N}\mathcal U(i,\sigma^{M(i)}\omega,k))=0$. \end{proof}

For any $\varepsilon>0$, $\beta\geq 0$, and $k,p\geq 1$ we now define the following sets:

$$F_{i,\beta,k}(\sigma^{M(i)}\omega,{\varepsilon})=\left\{\begin{array}{l}x\in X_{\sigma^{M(i)}\omega}:
\forall \gamma\in\{-1,1\}, \forall v\in \Sigma_{\sigma^{M(i)}\omega,n^i_k}\text{ satisfying }\\
|v\wedge x|_{n^i_k}|\geq n^i_{k-1},\text{  for any } \vl\in [v]_{\sigma^{M(i)}\omega},\\
\exp(-\gamma((\beta-\gamma\varepsilon)S_{n^i_k}\Psi_{i}(\sigma^{M(i)}\omega,\vl)+\gamma S_{n^i_k}\Phi_{i}(\sigma^{M(i)}\omega,\vl))
)\leq 1
\end{array}
\right\},
$$
$$
E_{i,\beta,p}(\sigma^{M(i)}\omega,{\varepsilon})=\bigcap_{k\ge p}F_{i,\beta,k}(\sigma^{M(i)}\omega,{\varepsilon})$$
and then
$$E_{i,\beta}(\sigma^{M(i)}\omega,{\varepsilon})=\bigcup_{p\geq 1}E_{i,\beta,p}(\sigma^{M(i)}\omega,{\varepsilon}).$$

\begin{lemma}\label{initial} For all  $i\in\N$, for any $\varepsilon>0$, for all $\omega\in\Omega(i)$, for all  $q\in Q_i$, the singularity set $E_{i,T'_{i}(q)}(\sigma^{M(i)}\omega,{\varepsilon})$ has full $\mu^{\Lambda_{i,q}}_{\sigma^{M(i)}\omega}$-measure.
\end{lemma}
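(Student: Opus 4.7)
The plan is a Birkhoff ergodic-theorem argument applied to the auxiliary random Gibbs measure $\mu^{\Lambda_{i,q}}_{\sigma^{M(i)}\omega}$, combined with variation control that transfers information at a single code of $x$ to all neighboring words sharing a long common prefix.

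Since $\Lambda_{i,q}=q\Phi_i-T_i(q)\Psi_i$ is a random H\"older potential with $P(\Lambda_{i,q})=0$, the associated $F$-invariant Gibbs measure $m^{\Lambda_{i,q}}\in\mathcal{I}_{\mathbb{P}}(\Sigma_\Omega)$ is unique and ergodic, with disintegration $\{\widetilde\mu^{\Lambda_{i,q}}_{\omega}\}$. Differentiating $P(q\Phi_i-T_i(q)\Psi_i)=0$ gives $T'_i(q)=\beta=\int\Phi_i\,\mathrm{d}m^{\Lambda_{i,q}}\big/\int\Psi_i\,\mathrm{d}m^{\Lambda_{i,q}}$, which is the link between the Birkhoff averages and the target exponent. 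By Birkhoff's ergodic theorem, $m^{\Lambda_{i,q}}$-a.s.\ one has $n^{-1}S_n\Phi_i(\omega',\vl)\to\int\Phi_i\,\mathrm{d}m^{\Lambda_{i,q}}$ and analogously for $\Psi_i$; in particular $S_n\Phi_i(\omega',\vl)-\beta S_n\Psi_i(\omega',\vl)=o(n)$. By disintegration at $\omega'=\sigma^{M(i)}\omega$ and projection via $\pi_{\sigma^{M(i)}\omega}$, this convergence holds at every symbolic code of $\mu^{\Lambda_{i,q}}_{\sigma^{M(i)}\omega}$-a.e. point $x$.

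Fix such an $x$, $k$ large, and $v\in\Sigma_{\sigma^{M(i)}\omega,n^i_k}$ with $|v\wedge x|_{n^i_k}|\ge n^i_{k-1}$. Writing $v=uw$ with $u=x|_{n^i_{k-1}}$, for any $\vl\in[v]_{\sigma^{M(i)}\omega}$ and $\Upsilon\in\{\Phi,\Psi\}$ we split
\[
S_{n^i_k}\Upsilon_i(\vl)=S_{n^i_{k-1}}\Upsilon_i(\vl)+S_{n^i_k-n^i_{k-1}}\Upsilon_i\bigl(\sigma^{n^i_{k-1}+M(i)}\omega,\,F^{n^i_{k-1}}\vl\bigr).
\]
The first summand differs from $S_{n^i_{k-1}}\Upsilon_i$ evaluated at a code of $x$ by at most $V_{n^i_{k-1}}\Upsilon_i(\sigma^{M(i)}\omega)\le V_{n^i_{k-1}}\Upsilon(\sigma^{M(i)}\omega)$ (the inequality between variations follows from the definition of $\Upsilon_i$), and this is $o(n^i_{k-1})$ by the bounds available on $\Omega(i)$ transferred from $\omega$ to $\sigma^{M(i)}\omega$ (since $M(i)$ is fixed). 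The second summand has modulus at most $C(n^i_k-n^i_{k-1})$ using the uniform bound $n^{-1}S_n\|\Upsilon(\cdot)\|_\infty\le C$ guaranteed on $\Omega(i)$. Since the sequence $\{n^i_k\}$ is built so that $n^i_k-n^i_{k-1}=o(n^i_{k-1})$, combining with the ergodic step at time $n^i_{k-1}$ yields $S_{n^i_k}\Phi_i(\vl)-\beta S_{n^i_k}\Psi_i(\vl)=o(n^i_k)$ uniformly in $v$ and $\vl$.

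The contraction property on $\Omega(i)$ gives $-S_{n^i_k}\Psi_i\ge n^i_k\varpi_\Psi/2$ for $k$ large, so the ratio $|S_{n^i_k}\Phi_i-\beta S_{n^i_k}\Psi_i|/|S_{n^i_k}\Psi_i|$ tends to zero uniformly, which forces the defining inequality of $F_{i,\beta,k}(\sigma^{M(i)}\omega,\varepsilon)$ to hold for every $k\ge p(x)$. Hence $x\in E_{i,\beta,p(x)}\subset E_{i,\beta}(\sigma^{M(i)}\omega,\varepsilon)$, and the lemma follows. The real difficulty lies in the uniform transfer above: ergodicity only provides information at a code of $x$, whereas $F_{i,\beta,k}$ requires the estimate at every admissible $v$ and every $\vl\in[v]$. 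This is precisely why the quantitative gap $n^i_k-n^i_{k-1}\ge n^i_{k-1}(c_{i,n^i_{k-1}})^{1/3}+\sqrt{\theta'(i,\omega,s_k)}$ is built into the definition of $\{n^i_k\}$: it is designed to dominate the error accumulated in the free block $w$ and over the tails of $\vl$.
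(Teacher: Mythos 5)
Your argument is essentially correct, but it takes a genuinely different route from the paper. The paper proves this lemma by a large--deviations estimate: it bounds $\mu^{\Lambda_{i,q}}_{\sigma^{M(i)}\omega}(X_{\sigma^{M(i)}\omega}\setminus F_{i,T_i'(q),k})$ by an exponentially tilted sum over all pairs $(v,v')$ with $|v\wedge v'|\ge n^i_{k-1}$, uses the analyticity of $T_i$ to expand $T_i(q+\gamma\eta)$ and absorb the tilt, obtains a bound of the form $2\exp(-(\varepsilon^2/4b)\,n^i_k\varpi_\Psi+o(n^i_k))$, and concludes by Borel--Cantelli; this is quantitative and works for every $\omega\in\Omega(i)$ using only the deterministic bounds built into $\Omega(i)$. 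You instead apply Birkhoff's theorem to the ergodic equilibrium state $m^{\Lambda_{i,q}}$, identify $T_i'(q)$ with $\int\Phi_i\,\mathrm{d}m/\int\Psi_i\,\mathrm{d}m$, and transfer the asymptotics from the code of $x$ to all neighboring $v$ by variation control on the common prefix plus a crude bound on the short free block. This is softer and avoids the counting of neighbors and the tilting parameter $\eta$, at the cost of (i) yielding the conclusion only for $\mathbb{P}$-a.e.\ $\omega\in\Omega(i)$ rather than all of them (harmless here, since $\Omega(i)$ is an Egorov set and the downstream facts are almost-sure anyway, but it should be said), (ii) requiring ergodicity and differentiability of the pressure with $T_i'(q)=\int\Phi_i\,\mathrm{d}m/\int\Psi_i\,\mathrm{d}m$, which the paper does justify elsewhere for H\"older potentials. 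Two small repairs: the free block $S_{n^i_k-n^i_{k-1}}\Upsilon_i(\sigma^{n^i_{k-1}+M(i)}\omega,\cdot)$ is a Birkhoff sum started at a shifted fiber, so the stated bound $C(n^i_k-n^i_{k-1})$ does not follow directly from the condition $\frac1nS_n\|\Upsilon(\omega)\|_\infty\le C$ on $\Omega(i)$; you need the estimate \eqref{Phi}, i.e.\ $S_{n^i_k}\|\Upsilon(\omega)\|_\infty-S_{n^i_{k-1}}\|\Upsilon(\omega)\|_\infty=o(n^i_k)$, which is available since $n^i_k-n^i_{k-1}=o(n^i_{k-1})$. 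Also, your closing remark misattributes the role of the lower bound $n^i_k-n^i_{k-1}\ge n^i_{k-1}(c_{i,n^i_{k-1}})^{1/3}+\sqrt{\theta'(i,\omega,s_k)}$: that inequality is what makes the Borel--Cantelli series converge in lemma~\ref{control}; what your proof (and the present lemma) actually uses is the opposite, upper, property that the gaps are $o(n^i_{k-1})$.
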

\begin{proof} Fix $\varepsilon>0$. Let
	$$
	S_{i,q,k}=\mu^{\Lambda_{i,q}}_{\sigma^{M(i)}\omega}\left (X_{\sigma^{M(i)}\omega}\setminus F_{i,T'_{i}(q),k}(\sigma^{M(i)}\omega,{\varepsilon})\right ).
	$$
	We have
	\begin{eqnarray*}
		S_{i,q,k}&\le &\sum_{\gamma\in \{-1,1\}}\sum_{v\in \Sigma_{\sigma^{M(i)}\omega,n^i_k}}\ \sum_{v'\in \Sigma_{\sigma^{M(i)}\omega,n^i_k},|v\wedge v'|\geq n^i_{k-1}}\mu^{\Lambda_{i,q}}_{\sigma^{M(i)}\omega}(U_{\sigma^{M(i)}\omega}^{v})\\
		&&\cdot\exp\big (-\gamma\eta((T'_{i}(q)-\gamma\varepsilon)S_{n^i_{k}}\Psi_{i}(\sigma^{M(i)}\omega,\vl')-S_{n^i_k}\Phi_{i}(\sigma^{M(i)}\omega,\vl'))\big )\\
		&=&\sum_{\gamma\in \{-1,1\}}\sum_{v,v'\in \Sigma_{\sigma^{M(i)}\omega,n^i_k},|v\wedge v'|\geq n^i_{k-1}}\exp((q+\gamma\eta)S_{n_k^i}\Phi_{i}(\sigma^{M(i)}\omega,\vl))\\
		&&\exp((-(T_{i}(q)+\gamma\eta T'_{i}(q)-\varepsilon\eta)S_{n^i_{k}}\Psi_{i})(\sigma^{M(i)}\omega,\vl))\\
		&&\cdot \exp(-\gamma\eta((T'_{i}(q)-\gamma\varepsilon)(S_{n^i_k}\Psi_{i}(\sigma^{M(i)}\omega,\vl')-S_{n^i_k}\Psi_{i}(\sigma^{M(i)}\omega,\vl)))\\
		&&\cdot\exp(\gamma\eta (S_{n^i_k}\Phi_{i}(\sigma^{M(i)}\omega,\vl')-S_{n^i_k}\Phi_{i}(\sigma^{M(i)}\omega,\vl)))+o(n^i_k))
	\end{eqnarray*}
	
	Since $T_{i}$ is in fact not only differentiable, but analytic \cite{Gundlach,MSU}, we have 	
	$$T_{i}(q+\gamma\eta)=T_{i}(q)+T_{i}'(q)\gamma\eta+O(\eta^2).$$
	uniformly in $q\in Q_i$. Thus, there exists $b>0$ such that for $\eta$ small enough, for all $q\in Q_i$, we have 
	$$|T_{i}(q+\gamma\eta)-T_{i}(q)-T_{i}'(q)\gamma\eta|\leq b\eta^2.$$ 
	
	Consider such an $\eta$ in $(0,\frac{\varepsilon}{2b}]$. We have  
	\begin{eqnarray*}
		S_{i,q,k}&\le &\sum_{\gamma\in \{-1,1\}}\sum_{v\in \Sigma_{\sigma^{M(i)}\omega,n^i_k}}(l(\sigma^{M(i)+n^i_{k-1}}\omega)\cdots l(\sigma^{M(i)+n^i_{k}-1}\omega) )\\
		&&\cdot\exp(S_{n^i_k}((q+\gamma\eta)\Phi_{i}-T_{i}(q+\gamma\eta)\Psi_{i})(\sigma^{M(i)}\omega,\vl))\\
		&&\cdot\exp((\varepsilon\eta-b\eta^2)S_{n^i_k}\Psi_{i}(\sigma^{M(i)}\omega,\vl)+o(n^i_k))\\
		&\leq& \sum_{\gamma\in \{-1,1\}}\exp((n^i_k-n^i_{k-1})C-(\varepsilon\eta-b\eta^2)n^i_{k}\varpi_{\Psi_{i}}+o(n^i_k))\\
		&\leq& \sum_{\gamma\in \{-1,1\}}\exp(n^i_kc_{i,n^i_k}-(\varepsilon\eta-b\eta^2)n^i_{k}\varpi_{\Psi_{i}}+o(n^i_k)) \quad\text{for $k$ large enough}\\
		&\leq& 2 \exp\left(-(\frac{\varepsilon^2}{4b})n^i_k\varpi_{\Psi}+o(n^i_k)\right ).
	\end{eqnarray*}
	Consequently, $\sum_{k=1}^{+\infty}S_{i,q,k}<\infty,$
	which by the Borel-Cantelli lemma yields the desired conclusion since $\varepsilon$ is arbitrary. 
\end{proof}

Now we can collect the following facts. 
\begin{fact}\label{control initial}
	{\rm 
		Lemma \ref{control} and \ref{initial} imply that  for all  $i\in\N$, for any $\epsilon_i>0$, for all $\omega\in\Omega(i)$,  there exists an integer $\mathcal{N}_i=\mathcal{N}_i(\sigma^{M(i)}\omega)$ such that for any $q\in Q_i$, there exists $E_{i,q}=E_{i,q}(\sigma^{M(i)}\omega)\subset X_{\sigma^{M(i)}\omega}$  such that
		\begin{enumerate}
			\item $\mu_{\sigma^{M(i)}\omega}^{\Lambda_{i,q}}(E_{i,q})>1-\epsilon_i$,
			\item $M(i)\leq n^i_{\mathcal{N}_i}\varepsilon_i^3$,
			\item $c_{i,n^i_{\mathcal{N}_i}}\leq \varepsilon^3_i$,
			\item $n^i_k-n^i_{k-1}\leq n^i_{k-1}\varepsilon^3_i$ for any $k\geq \mathcal{N}_i$,
			\item\label{approximate 5} for any $\vl\in E_{i,q}$, for any $v\in\Sigma_{\sigma^{M(i)}\omega,n^i_k}$ with $k\geq \mathcal{N}_i$ such that $\vl\in [v]_{\sigma^{M(i)}\omega}$, one has $|v\wedge v+|\geq n^i_{k-1}$ and $|v\wedge v-|\geq n^i_{k-1}$.
			Furthermore, for any $w\in\{v,v+,v-\}$, there exists (in fact for all ) $\wl\in [w]_{\sigma^{M(i)}\omega}$ such that
			\begin{equation}\label{TI'}
			\left|\frac{S_{n^i_k}\Phi_{i}(\sigma^{M(i)}\omega,\wl)}{S_{n^i_k}\Psi_{i}(\sigma^{M(i)}\omega,\wl)}-T'_{i}(q)\right|\leq \varepsilon_i,
			\end{equation}
			\begin{equation}\label{TImu*}
			\left|\frac{\log \mu^{\Lambda_{i,q}}_{\sigma^{M(i)}\omega}(U_{\sigma^{M(i)}\omega}^w)}{S_{|v|}\Psi_{i}(\sigma^{M(i)}\omega,\wl)}-T_{i}^*(T'_{i}(q))\right|\leq \varepsilon_i,
			\end{equation}
			and
			\begin{equation}\label{TI*}
			\left|\frac{S_{|v|}\Lambda_{i,q}(\sigma^{M(i)}\omega,\wl))}{S_{|v|}\Psi_{i}(\sigma^{M(i)}\omega,\wl)}-T_{i}^*(T'_{i}(q))\right|\leq \varepsilon_i.
			\end{equation}
			In fact with a suitable change of $\varepsilon_i$ (take it as $2\varepsilon_i$), we can get the following additional properties from \eqref{TI'},\eqref{TImu*}, and \eqref{TI*} above:
			\begin{equation}\label{TI'2}
			\left|\frac{S_{n^i_k}\Phi_{i}(\sigma^{M(i)}\omega,\wl)}{S_{n^i_k}\Psi_{i}(\sigma^{M(i)}\omega,\wl)}-d_i\right|\leq \varepsilon_i,
			\end{equation}
			\begin{equation}\label{TImu*2}
			\left|\frac{\log \mu^{\Lambda_{i,q}}_{\sigma^{M(i)}\omega}(U_{\sigma^{M(i)}\omega}^w)}{S_{|v|}\Psi_{i}(\sigma^{M(i)}\omega,\wl)}-T^*(d_i)\right|\leq \varepsilon_i,
			\end{equation}
			and
			\begin{equation}\label{TI*2}
			\left|\frac{S_{|v|}\Lambda_{i,q}(\sigma^{M(i)}\omega,\wl))}{S_{|v|}\Psi_{i}(\sigma^{M(i)}\omega,\wl)}-T^*(d_i)\right|\leq \varepsilon_i.
			\end{equation}
		\end{enumerate}

	}
\end{fact}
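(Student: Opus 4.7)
The plan is to recognize Fact~\ref{control initial} as a quantitative, uniform-in-$q$ repackaging of Lemmas~\ref{control} and~\ref{initial}, exploiting the fact that $Q_i$ is finite. Fix $i\in\N$ and $\omega\in\Omega(i)$. I first dispatch the deterministic growth conditions (2)--(4). By construction of the block sequence $(n^i_k(\omega))_{k\ge 1}$, we have $n^i_k\to\infty$ and $n^i_k-n^i_{k-1}=o(n^i_{k-1})$; combined with $c_{i,n}\to 0$, this yields a threshold $N_0(\sigma^{M(i)}\omega)$ such that for every $k\ge N_0$,
\[
M(i)\le n^i_k\varepsilon_i^3,\qquad c_{i,n^i_k}\le\varepsilon_i^3,\qquad n^i_k-n^i_{k-1}\le n^i_{k-1}\varepsilon_i^3.
\]
This threshold depends only on $\omega$, so any $\mathcal{N}_i\ge N_0$ validates (2)--(4) simultaneously for all $q$.

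Next, for each $q\in Q_i$, Lemma~\ref{initial} applied with $\varepsilon=\varepsilon_i$ gives $\mu^{\Lambda_{i,q}}_{\sigma^{M(i)}\omega}(E_{i,T'_i(q)}(\sigma^{M(i)}\omega,\varepsilon_i))=1$, while Lemma~\ref{control} gives $\mu^{\Lambda_{i,q}}_{\sigma^{M(i)}\omega}(\bigcap_N\bigcup_{k\ge N}\mathcal U(i,\sigma^{M(i)}\omega,k))=0$. Their intersection is the increasing union over $p$ of the sets
\[
E_{i,T'_i(q),p}(\sigma^{M(i)}\omega,\varepsilon_i)\setminus\bigcup_{k\ge p}\mathcal U(i,\sigma^{M(i)}\omega,k),
\]
so monotone convergence of measure furnishes an integer $p_q$ for which the corresponding truncated set, which I denote $E_{i,q}$, already has $\mu^{\Lambda_{i,q}}_{\sigma^{M(i)}\omega}$-measure $>1-\epsilon_i$. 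By construction, every point of $E_{i,q}$ has, for all $k\ge p_q$, its length-$n^i_k$ cylinder $[v]_{\sigma^{M(i)}\omega}$ agreeing with each neighbor in an initial block of length at least $n^i_{k-1}$ (this is condition~(5) in the statement), and the cylinder belongs to $F_{i,T'_i(q),k}(\sigma^{M(i)}\omega,\varepsilon_i)$. Setting $\mathcal{N}_i=\max\{N_0,\max_{q\in Q_i}p_q\}$ is legitimate because $Q_i$ is finite.

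It remains to derive \eqref{TI'}--\eqref{TI*} on $E_{i,q}$. Estimate \eqref{TI'} is exactly the unfolding of the defining inequalities of $F_{i,T'_i(q),k}$ after taking $\gamma\in\{-1,+1\}$. Estimate \eqref{TImu*} follows by applying proposition~\ref{wgibbs} to the random H\"older potential $\Lambda_{i,q}$, whose distortion $\epsilon(\Lambda_{i,q},n)$ is bounded by $c_{i,n}\le\varepsilon_i^3$ on $\Omega(i)$: this yields $\log\mu^{\Lambda_{i,q}}_{\sigma^{M(i)}\omega}(U^v)=qS_{|v|}\Phi_i-T_i(q)S_{|v|}\Psi_i+O(\varepsilon_i^3 n^i_k)$; dividing by $S_{|v|}\Psi_i$, using the lower bound $|S_{|v|}\Psi_i|\ge (c_\Psi/2) n^i_k$ coming from the mean contraction, invoking \eqref{TI'}, and applying the Legendre identity $T_i^*(T'_i(q))=qT'_i(q)-T_i(q)$, produces \eqref{TImu*}. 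Estimate \eqref{TI*} is the same manipulation with $S_{|v|}\Lambda_{i,q}$ in place of $\log\mu^{\Lambda_{i,q}}(U^v)$, skipping the distortion step. To transfer these bounds from $v$ to the neighbors $v\pm$, the inequalities $|v\wedge v\pm|\ge n^i_{k-1}$ and $n^i_k-n^i_{k-1}\le\varepsilon_i^3 n^i_{k-1}$ imply that the Birkhoff sums along $[v]$ and $[v\pm]$ differ by at most $O(\varepsilon_i^3 n^i_k)$, using the uniform bound $C$ on $n^{-1}S_n\|\Phi(\omega)\|_\infty$ and $n^{-1}S_n\|\Psi(\omega)\|_\infty$ guaranteed by $\omega\in\Omega(i)$. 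Finally, \eqref{TI'2}--\eqref{TI*2} follow from $d_i=T'_i(q)$ together with $|T_i^*(d_i)-T^*(d_i)|\le\varepsilon_i$, which was built into the choice of $j_i$ defining $Q_i$.

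The main technical hurdle is the simultaneous bookkeeping of four sources of error: the bounded distortion of $\mu^{\Lambda_{i,q}}$, the variation $\var'_i\Phi, \var'_i\Psi$ along long cylinders, the passage from $v$ to its neighbors $v\pm$, and the gap between $T^*_i$ and $T^*$. All of these are dominated by $c_{i,n^i_{\mathcal{N}_i}}$ or by $\varepsilon_i$, so taking $\mathcal{N}_i$ as above ensures they collectively fit under the tolerance $\varepsilon_i$, after the harmless rescaling (replacing $\varepsilon_i$ by $2\varepsilon_i$) mentioned in the statement.
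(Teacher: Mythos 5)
Your proposal is correct and follows the route the paper intends: the paper states this Fact without a separate proof, asserting it as a direct consequence of Lemmas~\ref{control} and~\ref{initial}, and your elaboration (truncating the two full-measure sets via continuity of measure, using finiteness of $Q_i$ to obtain a uniform $\mathcal{N}_i$, and combining the Gibbs/distortion estimates for $\Lambda_{i,q}$ with the Legendre identity $T_i^*(T_i'(q))=qT_i'(q)-T_i(q)$) is exactly the intended argument. The only loose end is the constant-factor slack (e.g.\ the factor $|q|$ incurred when passing from \eqref{TI'} to \eqref{TImu*}), which is harmless since $Q_i$ is finite and is absorbed by the rescaling of $\varepsilon_i$ that the statement itself permits.
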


\begin{fact}\label{modify}
	{\rm
		We can change $\Omega(i)$ to $\Omega_i\subset \Omega(i)$ a bit smaller such that $\mathbb{P}(\Omega_i)\geq 1-\epsilon$ and	
		there exist $\mathcal{N}_i$ and $W(i)$ such that  for any $\omega\in \Omega_i$, 
		$\mathcal{N}_i(\sigma^{M(i)}\omega)\leq \mathcal{N}_i$ and $n^{i}_{\mathcal{N}_i}(\omega)\leq W(i)$
		and the properties listed in Facts \ref{control initial} hold.
	} 
\end{fact}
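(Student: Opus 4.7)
The plan is to reduce the statement to a standard Egorov-type argument: the claimed uniform bounds $\mathcal{N}_i$ and $W(i)$ exist because the corresponding random quantities $\mathcal{N}_i(\sigma^{M(i)}\omega)$ and $n^{i}_{\mathcal{N}_i(\sigma^{M(i)}\omega)}(\omega)$ are $\mathbb{P}$-a.s.\ finite and measurable in $\omega$, so they can be truncated on a set of arbitrarily small probability loss.

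First, I would verify measurability. From the construction of $\Omega(i)$ in Section~\ref{subsection:Simultaneous control of auxiliary  random measures}, the return times $\theta'(i,\omega,s)$ are measurable functions of $\omega$, and hence so is the sequence $(n^i_k(\omega))_{k\ge 1}$ obtained from them by a measurable recursive rule involving the measurable quantities $c_{i,n}$ and $M(i)$. The integer $\mathcal{N}_i(\sigma^{M(i)}\omega)$ arising in Fact~\ref{control initial} is defined as the smallest index $k$ ensuring the four inequalities (2)--(5) there, all of which involve the measurable functions $c_{i,\cdot}$, $n^i_k$, $M$, and the random Gibbs measures $\mu^{\Lambda_{i,q}}_{\sigma^{M(i)}\omega}$; thus $\omega\mapsto \mathcal{N}_i(\sigma^{M(i)}\omega)$ is measurable, as is $\omega\mapsto n^{i}_{\mathcal{N}_i(\sigma^{M(i)}\omega)}(\omega)$.

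Next, since the measurable function $\omega\mapsto \mathcal{N}_i(\sigma^{M(i)}\omega)$ is finite $\mathbb{P}$-a.e.\ on $\Omega(i)$, and likewise for $n^{i}_{\mathcal{N}_i(\sigma^{M(i)}\omega)}(\omega)$, we have
\begin{equation*}
\lim_{N\to\infty}\mathbb{P}\bigl(\{\omega\in\Omega(i):\mathcal{N}_i(\sigma^{M(i)}\omega)\le N\}\bigr)=\mathbb{P}(\Omega(i)),
\end{equation*}
and similarly for the second quantity. Given any prescribed tolerance $\epsilon>0$ (say $\epsilon\ge\epsilon_i/2$ to match the definition of $\Omega(i)$), I would  pick integers $\mathcal{N}_i$ and $W(i)$ large enough that the bad set
\begin{equation*}
B_i:=\bigl\{\omega\in\Omega(i):\mathcal{N}_i(\sigma^{M(i)}\omega)>\mathcal{N}_i\text{ or }n^i_{\mathcal{N}_i}(\omega)>W(i)\bigr\}
\end{equation*}
satisfies $\mathbb{P}(B_i)<\epsilon-\epsilon_i/4$, and then define $\Omega_i:=\Omega(i)\setminus B_i$. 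By construction $\mathbb{P}(\Omega_i)\ge \mathbb{P}(\Omega(i))-\mathbb{P}(B_i)\ge 1-\epsilon$, and for every $\omega\in\Omega_i$ the deterministic bounds $\mathcal{N}_i(\sigma^{M(i)}\omega)\le \mathcal{N}_i$ and $n^i_{\mathcal{N}_i}(\omega)\le W(i)$ hold.

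Finally, I would observe that since $\Omega_i\subset\Omega(i)$ and all properties listed in Fact~\ref{control initial} are pointwise-in-$\omega$ statements (they are assertions about a fixed $\omega\in\Omega(i)$, not about the distribution of the variables), they continue to hold on $\Omega_i$ unchanged. No genuine obstacle arises; the only technical point requiring care is the measurability verification in the first step, which is a routine consequence of the explicit construction of the sequences $(\theta'(i,\omega,s))$ and $(n^i_k(\omega))$ and of the measurability of the underlying Gibbs data $\{\mu^{\Lambda_{i,q}}_\omega\}$ in $\omega$.
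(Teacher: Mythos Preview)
Your proposal is correct and is exactly the routine Egorov/truncation argument the paper has in mind; in fact the paper states Fact~\ref{modify} without proof, treating it as an immediate consequence of the a.e.\ finiteness and measurability of $\omega\mapsto \mathcal{N}_i(\sigma^{M(i)}\omega)$ and $\omega\mapsto n^i_{\mathcal{N}_i(\sigma^{M(i)}\omega)}(\omega)$. Your write-up simply makes explicit what the paper leaves implicit.
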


We  define $\theta(i,\omega,s)$ as being  the $s$-th return time to the set $\Omega_{i}$ for the point $\omega$.
Since $\mathbb{N}$ is countable, there exists $\widetilde\Omega\subset\widetilde\Omega'$ of full probability such that  for all $\omega\in\widetilde\Omega$, for any $i\in \mathbb{N}$, we have 
$$\lim_{s\to\infty}\frac{\theta(i,\omega,s)}{s}=\frac{1}{\mathbb{P}(\Omega_i)},$$
hence
$$\lim_{s\to\infty}\frac{\theta(i,\omega,s)-\theta(i,\omega,s-1)}{\theta(i,\omega,s-1)}=0.$$

\section[Multifractal of random weak Gibbs measures]{Multifractal analysis of random weak Gibbs measures:\\Proof of theorems~\ref{multifractal initial} and~\ref{multifractal initial2}}

This section consists of three subsections. In the first one we obtain the sharp lower bound for the lower $L^q$-spectrum of $\mu_\omega$. Next, in the second subsection, we prove the validity of the mutifractal formalism (see theorem~\ref{multifractal initial}(2)). Due to  \eqref{LDl}, \eqref{LDO} and lemma \ref{pro:for T}, we just need to prove $\dim_H E(\mu_{\omega},d)= \tau_{\mu_\omega}^*(d) $. There, our approach to construct suitable auxiliary measures already prepares the material used to establish in the third subsection the refinements gathered in theorem~\ref{multifractal initial}(3)(4) and theorem~~\ref{multifractal initial2}.

\subsection{Lower bound for $\tau_{\mu_\omega}$ and upper bound for $\tau_{\mu_\omega}^*$}

Fix a countable and dense subset $D$ of $\R$. Let  $\widehat{\Omega}$ be a set  of full $\mathbb P$-probability, such that:
\begin{enumerate}
	\item for all $q\in D$ the weak Gibbs measure $\{\widetilde \mu^{(q\Phi-T(q)\Psi)}_\omega\}_{\omega\in\widehat{\Omega}}$ are defined;
	\item  for all $\omega\in\widehat{\Omega}$  the conclusions of proposition~\ref{for n} hold all the potentials $q\Phi-T(q)\Psi$, $q\in D$;
	\item for all $n$ large enough, for all $v\in \Sigma_{\omega,n}$, for all $\vl\in [v]_\omega$,
\end{enumerate} 
$$
\exp(-nC_{\Psi}-o(n))\le \exp (S_n\Psi(\omega,\vl))\le \exp(-nc_{\Psi}-o(n)),
$$
which follows from  the ergodic theorem applied to the potentials $\|\Psi(\omega)\|_\infty$, where $\Psi(\omega,\vl)=\psi(\omega,v_0,\pi(\vl))\text{ for }\vl=v_0v_1\cdots\in\Sigma_{\omega}$ and then $c_\Psi>0$ and $C_\psi$ are finite.

We will establish the lower bound $\tau_{\mu_\omega}(q)\ge T(q)$ for all $\omega\in\widehat{\Omega}$ and $q\in D$. Since $D$ is dense and both $\tau_{\mu_\omega}$ and $T$ are continuous, this will yield $\tau_{\mu_\omega}\ge T$ for all $\omega\in\widehat{\Omega}$. By using the multifractal formalism, this immediately yields the desired upper bound $T^*$ for $\tau_{\mu_\omega}^*$ and the various  spectra we consider for~$\mu_\omega$. The equality $\overline\tau_{\mu_\omega}=\tau_{\mu_\omega}$ then follows from standard considerations in large deviations theory.

Fix $\omega\in\widehat{\Omega}$. Let $r>0$ and consider $\mathcal{B}=\{B_i\}$, a packing of $X_{\omega}$ by disjoint balls $B_i$ with the center $x_i$ and radius $r$.
For each ball $B_i$, choose $n=n_i$ and $v(x_i)\in \Sigma_{\omega,n}$ such that $x_i\in U_{\omega}^{v(x_i)}$ and $|U_{\omega}^{v(x_i)}|\leq r$, but $|U_{\omega}^{v(x_i)|_{n-1}}|> r$.  By removing a set of probability 0 from $\widehat{\Omega}$ if necessary, for any $\vl\in [v(x_i)]_\omega$, we have
$$r\geq|U_{\omega}^{v(x_i)}|\geq \exp(S_{n}\Psi(\omega,\vl)-o(n))\geq \exp(-nC_{\Psi}-o(n)),$$
where we have used ergodic theorem. Thus $n\geq \frac{-\log r}{2C_{\Psi}}$ for $r$ small enough.
On the other hand,  for $r$ small enough, for any $\vl\in [v(x_i)]_\omega$ we have 
$$r\leq|U_{\omega}^{v(x_i)|_{n-1}}|\leq\exp(S_{n-1}\Psi(\omega,\vl)+o(n))\leq \exp(-(n-1)c_\Psi+o(n)),$$so
$n\leq \frac{-2\log r}{c_\Psi}$. Thus, for $r$ small enough, independently on $\mathcal B$,  if $v(x_i)\in \Sigma_{\omega,n}$ and $\vl\in [v(x_i)]_\omega$ we have 
\begin{equation}\label{rn}
\frac{-\log r}{2C_{\Psi}}\le n\le \frac{-2\log r}{c_\Psi}.
\end{equation}

\medskip

\noindent
{\bf Case $q\in D\cap (-\infty,0)$:}

For each $B_i\in\mathcal{B}$, one has $X^{v(x_i)}_{\omega}\subset B_i$, so for any $\vl\in [v(x_i)]_\omega$
\begin{eqnarray*}
	(\mu_{\omega}(B_i))^q &\leq & (\mu_{\omega}(X^{v(x_i)}_{\omega}))^q\\ 
	&\le & \exp(q S_{n}\Phi(\omega,\vl)+o(n))  \\
	&=& \exp(S_{n}(q\Phi-T(q)\Psi)(\omega,\vl)) \cdot \exp(T(q)S_n\Psi(\omega,\vl)+o(n))\\
	&\leq & \mu^{(q\Phi-T(q)\Psi)}_{\omega}(X^{v(x_i)}_{\omega}) r^{T(q)} \exp(o(-\log r)),
\end{eqnarray*}
where we have applied proposition~\ref{for n}(2) to the potential $q\Phi-T(q)\Psi$ as well as proposition~\ref{for n}(1), the fact that $ |U_{\omega}^{v(x_i)|_{n-1}}|> r \ge  |U_{\omega}^{v(x_i)}|$ and \eqref{rn}. It follows that  $\sum_{i}(\mu_{\omega}(B_i))^q\leq r^{T(q)} \exp(o(-\log r)) $, and this bound does not depend on the choice of the packing $\{B_i\}$. Letting $r\to 0$ yields  $\tau_{{\mu}_{\omega}}(q)\geq T(q)$.

\medskip

\noindent
{\bf Case $q\in D\cap  [0,+\infty)$:}           Define $$V(\omega,n,r)=\{v\in\Sigma_{\omega,n}:|U_{\omega}^v|\geq 2r, \exists s \text{ such that } vs\in \Sigma_{\omega,n+1},\ |U_{\omega}^{vs}|< 2r \},$$
$$V'(\omega,n,r)=\{v\in V(\omega,n,r), \text{there is no } k<n \text{ such that } v|_k\in V(\omega,k,r)\},$$
$$V(\omega,r)=\cup_{n\geq 1}V'(\omega,n,r).$$
Then $\{U_{\omega}^v:v\in V(\omega,r)\}$ is a partition of $[0,1]$.
Define $n(\omega,r)=\max\{|v|:v\in V(\omega,r)\}$ and $n'(\omega,r)=\min\{|v|:v\in
V(\omega,r)\}$. Then, from \eqref{rn} we now that for some positive constants $B_1$ and $B_2$, for $r$ small enough, we have $-B_1 \log (r)\le n'(\omega,r)\le n(\omega,r)\le -B_2\log( r)$ .  

For any $v\in V(\omega,r)$, $U_{\omega}^v$ meets at most $\exp(o(-\log r))$ many balls of $B_i$ and for any $B_i$, $B_i$ meets at most two intervals of $U_{\omega}^v,\ U_{\omega}^{w}$ with $v,w \in V(\omega,r)$. Consequently,
since $(\mu_{\omega}(B_i))^q\leq 2^q(({\mu}_{\omega}(U_{\omega}^v))^q+({\mu}_{\omega}(U_{\omega}^{w}))^q)$, we have
\begin{eqnarray*}
	\sum_{B_i\in\mathcal{B}}({\mu}_{\omega}(B_i))^q &\leq& \exp(o(-\log r))2^q\sum_{n'(\omega,r)\leq n\leq n(\omega,r)}\sum_{v\in \Sigma_{\omega,n}\cap V(\omega,r)}({\mu}_{\omega}(U_{\omega}^{v}))^q)
\end{eqnarray*}
Using the same argument as for $q<0$, we can know get that $$({\mu}_{\omega}(U_{\omega}^{v}))^q)\leq \mu^{(q\Phi-T(q)\Psi)}_{\omega}(U_{\omega}^{v})r^{T(q)}\exp(o(-\log r)),$$
so that 
\begin{align*}
&\sum_{B_i\in\mathcal{B}}({\mu}_{\omega}(B_i))^q\\
&\leq   r^{T(q)}\exp(o(-\log r))\sum_{n'(\omega,r)\leq n\leq n(\omega,r)}\sum_{v\in \Sigma_{\omega,n}\cap V(\omega,r)}\mu^{(q\Phi-T(q)\Psi)}_{\omega}(U_{\omega}^{v})\\
&=r^{T(q)}\exp(o(-\log r)),
\end{align*}
independently on $\{B_i\}$, where we used the fact that  $\{U_{\omega}^v:v\in V(\omega,r)\}$ is a partition of $[0,1]$. Letting $r\to 0$ yields $\tau_{{\mu}_{\omega}}(q)\geq T(q)$.
\medskip

\subsection{Lower bound for the Hausdorff spectrum}
Recall facts~\ref{control initial} and facts~\ref{modify} derived at the end of section~\ref{subsection:Simultaneous control of auxiliary random measures}.
For any  $\omega\in \widetilde{\Omega}$, for any $d\in [T'(+\infty),T'(-\infty)]$, for any sequence $\{d_i\}_{i\in \mathbb{N}}$ with $d_i\in D_i$, such that $\lim_{i\to \infty} d_i=d $, and consequently $\lim_{i\to \infty} T^*(d_i)=T^*(d)$ by continuity of $T^*$, we will construct a probability measure $\eta_{\omega}$ supported on a set $K(\omega,\{d_i\}_{i\geq 1})$ such that
\begin{itemize}
	\item $K(\omega,\{d_i\}_{i\geq 1})\subset E(\mu_{\omega},d)$,
	\item For any $x\in K(\omega,\{d_i\}_{i\geq 1})$, $\liminf_{r\to 0}\frac{\log(\eta_{\omega}(B(x,r)))}{\log r}\geq T^\ast(d)$.
\end{itemize}
This will imply that $\dim_H \eta_{\omega}\geq T^\ast(d)$, and then $$\dim_H(E(\mu_{\omega},d))\geq \dim_H(K(\omega,\{d_i\}_{i\geq 1}))\geq T^\ast(d).$$

Fix a sequence $\{\epsilon_i\}_{i\in\N}$ small enough such that $\Pi_{i\geq1}(1-\epsilon_i)\geq \frac{1}{2}$. For each $i\in\N$, Facts \ref{control initial} will be applied with this $\epsilon_i$.

From now on:
\begin{itemize}
	\item we only deal with points $\omega$ in the set $\widetilde\Omega$  of $\mathbb P$-probability 1 for which  the sequence $\{\theta(i,\omega,s)\}_{i,s\in \N,\omega\in\widetilde\Omega}$ is well defined;
	\item the sequence $\{\Omega_i,\mathcal{N}_i,W(i),\varepsilon_i\}_{i\in \N}$ is fixed;
	\item for each $\omega\in \Omega_i$ the sequence $\{n^i_k\}_{k\in \N}$ is well defined; 
	\item we denote the properties listed in fact~\ref{control initial} by $GP(\omega,i,q)$. More precisely, we denote the five items by $GP(\omega,i,q)(1)$ to $GP(\omega,i,q)(5)$.
\end{itemize} 

We will build a family of Moran structures indexed by the  elements  of $\prod_{i\ge 1} D_i$. 

For any $\omega\in \widetilde{\Omega}$, recall that $\theta(1,\omega,1)$ is the  smallest $n\in \mathbb{N}$ such that $\sigma^{n}\omega\in \Omega_1\subset \Omega(1)$. Define $m_1:=\theta(1,\omega,1)+M(1)$.  Facts~\ref{control initial} and  facts~\ref{modify} tell us that for any $d_1\in D_1$, there exists $q_1\in Q_1$ such that $T'_{1}(q_1)=d_1$ and a set  $E_{1,q_1}(\sigma^{m_1}\omega)\subset X_{\sigma^{m_1}\omega}$ such that $GP(\sigma^{\theta(1,\omega,1)}\omega,1,q_1)$ hold.

Choose $\mathcal{N}'_1>\mathcal{N}_1$ large enough such that
\begin{itemize}
	\item $m_1\leq \varepsilon^3_{2} n^{1}_{\mathcal{N}'_1}$. 
	\item $M(2)\leq \varepsilon_2^3 n^1_{\mathcal{N}'_1}$,
	\item $W(2)\leq \varepsilon^3_2 n^{1}_{\mathcal{N}'_1}$,	
	\item for any $s$ such that the return time $\theta(2,\omega,s)$ satisfies $\theta(2,\omega,s)\geq {m_1}+n^{1}_{\mathcal{N}'_1}$,  one also has $$\frac{\theta(2,\omega,s)-\theta(2,\omega,s-1)}{\theta(2,\omega,s-1)}\leq \varepsilon^3_2.$$
	
	Let $s_{2}$ be the smallest $s$ such that $\theta(2,\omega,s)\geq m_1+n^{1}_{\mathcal{N}'_1}$.	  
\end{itemize}

Now, let $N_1$ be the largest $k$ such that $m_1+n^{1}_{k}\leq \theta(2,\omega,s_2)$ (by construction we have $N_1\ge \mathcal{N}'_1$). Then
$$\theta(2,\omega,s_2)-m_1-n^{1}_{N_1}\leq  n^{1}_{N_1+1}-n^{1}_{N_1}\le \varepsilon^3_{1} n^{1}_{N_1}$$ by item 4. above. 

Here is a picture which illustrates the beginning of the construction.
\begin{figure}[!htb]
	\begin{tikzpicture}[scale=0.7] 
	\node (b1) at (-0.1,1) {$\omega$};
	\draw[->] (-0.1,0.7) -- (-0.1,0);
	\draw[->] (0,0) -- (1,0);
	\node (b2) at (0.5,0.5)   {$n$}; 
	\node (b3) at (1,-1)   {$\sigma^{\theta(1,\omega,1)}\omega\in \Omega_1$};
	\draw[->] (1.1,-0.7) -- (1.1,0);
	\draw[->] (1.15,0) -- (2.06,0);
	\node (b4) at (1.6,0.5)   {$M(1)$}; 
	\node at (2.7,0) [circle,draw=blue!50] {$\sigma^{m_1}\omega$};
	\draw[snake=coil,segment aspect=0] (3.3,0) -- (6,0);
	\node (b5) at (4.7,1)   {long time};
	\draw[->] (4.7,0.7) -- (4.7,0.1);	
	\draw[->] (6.1,0) -- (15,0);
	\node at (1.7,0) [circle,draw=red!50,fill= red!30] {};
	\node at (3.7,0) [circle,draw=red!50,fill= red!30] {};
	\draw[snake=brace,mirror snake,raise snake=5pt,red]   (2.7,0) -- (5,0);
	\node (b6) at (3.8,-0.7)   { $n^{1}_{\mathcal{N}'_1}$}; 
	\node at (6.7,0) [circle,draw=red!50,fill= red!30] {};
	\node at (8.5,0) [circle,draw=red!50,fill= red!30] {};
	\draw[-] (10.3,0.1) -- (10.3,-0.1);
	\draw[snake=brace,snake,raise snake=5pt,red]   (2.7,0) -- (10.3,0);
	\node (b7) at (6.5,0.7)   { $n^{1}_{N_1}$};
	\node at (10.7,0) [circle,draw=red!50,fill= red!30] {};
	\node at (12.8,0) [circle,draw=red!50,fill= red!30] {};
	\draw[->] (12.8,0.7) -- (12.8,0.2);
	\node (b8) at (12.8,1)   {$\theta(2,\omega,s_2)$};
	\node at (14.2,0) [circle,draw=red!50,fill= red!30] {};
	\end{tikzpicture}
\end{figure}

For each $q\in Q_1$ and $k\ge 1$, let 
$$\mathcal{V}(\sigma^{{m_1}}\omega,1,q,k)=\left\{v\in \Sigma_{\sigma^{{m_1}}\omega ,n^{1}_{k}}
:E_{1,q}(\sigma^{m_1}\omega) \cap  X^v_{\sigma^{m_1}\omega}\neq\emptyset\right\}.$$

Also, set
$$
\mathcal{V}(\sigma^{{m_1}}\omega,1,q)=\mathcal{V}(\sigma^{{m_1}}\omega,1,q,N_1).
$$
Fix $w_0\in \Sigma_{\omega,\theta(1,\omega,1)}.$ For any $d_1\in D_1$,  we define:
$$R_1(d_1)=\left\{w_0\ast v:
v\in \mathcal{V}(\sigma^{m_1}\omega,1,q_{1})
\right\},$$
and 
$$R_1=\bigcup_{d_1\in D_1} R_1(d_1)$$
(recall that the meaning of $s*s'$ is specified at the beginning of section~\ref{subsection: Random subshift, relativized entropy, topological pressure  and weak Gibbs measures}). 
\medskip

Suppose that  $\theta({i+1},\omega,s_{i+1}),N_i,R_i$ have been chosen. Define $$m_{i+1}:=\theta({i+1},\omega,s_{i+1})+M(i+1)$$
and $$n^{i+1}_k=n^{i+1}_k(\sigma^{\theta({i+1},\omega,s_{i+1})}\omega).$$
Like in the case $i=0$, for any $d_{i+1}\in D_{i+1}$, there exists $q_{i+1}\in Q_{i+1}$ with $T_{i+1}'(q_{i+1})=d_i$ as well as a set $E_{i+1,q_{i+1}}(\sigma^{m_{i+1}}\omega)\subset X_{\sigma^{m_{i+1}}\omega}$  such that property $GP(\sigma^{\theta(i+1,\omega,s_{i+1})}\omega,i+1,q_{i+1})$ holds.

Then choose $\mathcal{N}'_{i+1}$ large enough such that
\begin{itemize}
	\item $m_{i+1}\leq \varepsilon^3_{i+2}n^{i+1}_{\mathcal N'_{i+1}}$,
	\item $M(i+2)\leq \varepsilon^3_{i+2} n^{i+1}_{\mathcal{N}'_{i+1}},$
	\item $W(i+2)\leq \varepsilon^2_{i+2} n^{i+1}_{\mathcal{N}'_{i+1}}.$ 
	
	The above two items  ensure that we do not need to wait a long relative time to go to the next step.

	\item for any $s$ with $\theta(i+2,\omega,s)\geq m_{i+1}+n^{i+1}_{\mathcal{N}'_{i+1}}$ one has $$\frac{\theta(i+2,\omega,s)-\theta(i+2,\omega,s-1)}{\theta(i+2,\omega,s-1)}\leq \varepsilon^3_{i+2}.$$
	Let $s_{i+2}$ be the smallest $s$ such that 
	\begin{equation}\label{control22}
	\theta(i+2,\omega,s)\geq m_{i+1}+n^{i+1}_{\mathcal{N}'_{i+1}}.
	\end{equation}
	
\end{itemize}

Let $N_{i+1}$ be the largest $k\ge \mathcal{N}'_{i+1}$ such that $n^{i+1}_{k}\leq \theta(i+2,\omega,s_{i+2})$. Then we have 
$$\theta(i+2,\omega,s_{i+2})-m_{i+1}-n^{i+1}_{N_{i+1}}\leq n^{i+1}_{N_{i+1}} \varepsilon^3_{i+1} $$
due to item 4. 

\begin{remark}\label{growth speed} By construction, we can take $n^{i+1}_{\mathcal{N}'_{i+1}}$ as big as we want (In the construction $m_{i+1}\leq \varepsilon^3_{i+2}n^{i+1}_{\mathcal N'_{i+1}}$). This implies that we can get the sequence $\{m_i\}_{i\in\mathbb{N}}$ increasing as fast as we want. We can also impose that $m_{i+2}-m_{i+1}\ge n^{i+1}_{\mathcal{N}'_{i+1}}$ and  $m_{i+1}=o(n^{i+1}_{\mathcal{N}'_{i+1}})$. Thus, the speed we fix for the growth of $(n^{i}_{\mathcal{N}'_{i}})_{i\in\N}$ directly impacts the growth speed of $(m_i)_{i\in\N}$.
\end{remark}

Here again we  draw a picture to illustrate this construction.
\begin{figure}[!htb]
	\begin{tikzpicture}[scale=0.65]
	\node (b3) at (-0.1,0)   {$\theta({i+1},\omega,s_{i+1})$};
	\draw[->] (1.2,0) -- (2,0);
	\node (b4) at (1.6,1)   {$M(i+1)$}; 
	\node at (2.7,0) [circle,draw=blue!50,fill= green!30] {$\sigma^{m_{i+1}}\omega$};
	\draw[snake=coil,segment aspect=0] (3.3,0) -- (6,0);
	\node (b5) at (4.7,1)   {long time};
	\draw[->] (4.7,0.7) -- (4.7,0.1);	
	\draw[->] (6.1,0) -- (15,0);
	\node at (1.7,0) [circle,draw=red!50,fill= red!30] {};
	\node at (3.7,0) [circle,draw=red!50,fill= red!30] {};
	\draw[snake=brace,mirror snake,raise snake=5pt,red]   (2.7,0) -- (5,0);
	\node (b6) at (3.8,-0.7)   { $n^{i+1}_{\mathcal{N}'_{i+1}}$}; 
	\node at (6.7,0) [circle,draw=red!50,fill= red!30] {};
	\node at (8.5,0) [circle,draw=red!50,fill= red!30] {};
	\draw[-] (10.3,0.1) -- (10.3,-0.1);
	\draw[snake=brace,snake,raise snake=5pt,red]   (2.7,0) -- (10.3,0);
	\node (b7) at (6.5,0.7)   { $n^{i+1}_{N_{i+1}}$};
	\node at (10.7,0) [circle,draw=red!50,fill= red!30] {};
	\node at (12.8,0) [circle,draw=red!50,fill= red!30] {};
	\draw[->] (12.8,0.7) -- (12.8,0.2);
	\node (b8) at (12.8,1)   {$\theta(i+2,\omega,s_{i+2})$};
	\node at (14.2,0) [circle,draw=red!50,fill= red!30] {};
	\end{tikzpicture}
\end{figure}

For $q_{i+1}\in Q_{i+1}$ and $k\ge 1$, define
$$\mathcal{V}(\sigma^{m_{i+1}}\omega,i+1,q_{i+1},k)=
\left\{v\in \Sigma_{\sigma^{m_{i+1}}\omega ,n^{i+1}_{k}}: E_{i+1,q}(\sigma^{m_{i+1}}\omega)\cap  X^v_{\sigma^{m_{i+1}}\omega}\neq\emptyset \right\},$$
and
$$\mathcal{V}(\sigma^{m_{i+1}}\omega,i+1,q_{i+1})=\mathcal{V}(\sigma^{m_{i+1}}\omega,i+1,q_{i+1},N_{i+1}).
$$

As in the case $i=0$, for any $d_{i+1}\in D_{i+1}$, we can define
\begin{eqnarray*}
	&&R_{i+1}(d_1,d_2,\cdots,d_i, d_{i+1})\\
	&&\quad\quad =\left\{w\ast  v\left|
	\begin{array}{l}
		w\in R_i(d_1,d_2\cdots d_{i}),\\
		v\in \mathcal{V}(\sigma^{m_{i+1}}\omega,i+1,q_{i+1})
	\end{array}
	\right.
	\right\}
\end{eqnarray*}
and
$$
R_{i+1}=\left\{w\ast v\left|
\begin{array}{l}
w\in R_i,\ v\in \mathcal{V}(\sigma^{m_{i+1}}\omega,i+1,q_{i+1})
\end{array}
\right.
\right\}.$$

For any $d\in [T'(+\infty),T'(-\infty)]$, there exists $\{d_i\}_{i\in \mathbb{N}}\in \prod_{i\in \mathbb{N}}D_i$, such that $\lim_{i\to\infty}d_i=d$ and
$\lim_{i\to \infty}T^*(d_i)=T^*(d)$. Moreover, if $d\in (T'(+\infty),T'(-\infty))$, $T^*_{i}({d_i})$ converges to $T^*(d)$ directly from lemma~\ref{convergent T*}. If $d\in \{T'(+\infty),T'(-\infty)\}$, again due to lemma~\ref{convergent T*}, we can choose $(d_i)_{i\ge 1}$ to be piecewise constant to make sure that $T^*_{i}({d_i})-T^*(d_i)$ tends to 0 as $i\to\infty$, so that  $T^*_{i}({d_i})$ converges to $T^*(d)$ as well. We fix such a sequence and suppose that the correspondence $q$ are $\{q_i\}_{i\in \mathbb{N}}$.

Define
$$K(\omega,\{d_i\}_{i\geq 1})=\bigcap_{i\geq 1}\bigcup_{v\in R_{i}(d_1,d_2,\cdots, d_{i})} U_{\omega}^v.$$
We will prove that for any $x\in K(\omega,\{d_i\}_{i\geq 1})$, one has
$\lim_{r\to 0+}\frac{\log \mu_{\omega}(B(x,r))}{\log r}=d.$ Then $K(\omega,\{d_i\}_{i\geq 1})\subset E(\mu_{\omega},d)$. To do so, we first establish two general estimates:

\medskip

{\bf First estimate.}
For any $w\in R_{i},\ v\in \mathcal{V}(\sigma^{m_{i+1}}\omega,i+1,q_{i+1},k) $, for any $k\ge \mathcal{N}_{i+1}$, for any $\vl\in [w\ast v]_{\omega}$   for $\Upsilon\in\{\Phi,\Psi\}$ we have  (remembering the notations introduced at the beginning of section~\ref{subsection:Simultaneous control of auxiliary  random measures} and the fact that by construction we have $| \Upsilon(\omega,\vl)- \Upsilon_{p}(\omega,\vl)|\leq \var'_{p} \Upsilon(\omega)$):
\begin{equation*}
\begin{split}
&\Big |S_{m_{i+1}+n^{{i+1}}_k} \Upsilon(\omega,\vl)-\sum_{p=1}^{i}S_{n^{p}_{N_p}} \Upsilon_{p}(F^{m_{{p}}}
(\omega,\vl))-S_{n^{{i+1}}_k} \Upsilon_{{i+1}}(F^{m_{i+1}}(\omega,\vl))\Big |  \\
\leq &\Big  |\sum_{p=1}^{i}\sum_{t=0}^{n^{p}_{N_p}-1}( \Upsilon- \Upsilon_{p})(F^{m_{p}+t}(\omega,\vl))\Big |+
\Big |\sum_{t=0}^{n^{{i+1}}_{k}-1}( \Upsilon- \Upsilon_{{i+1}})(F^{m_{i+1}+t}(\omega,\vl))\Big |\\
&\quad+\Big |\sum_{t=0}^{m_1-1} \Upsilon(F^t(\omega,\vl))\Big |+\Big | \sum_{p=1}^{i} \sum_{t=m_p+n^p_{N_p}}^{m_{p+1}-1}  \Upsilon(F^t(\omega,\vl))\Big |\\
\leq &\sum_{p=1}^{i}\sum_{t=0}^{n^p_{N_p}-1}(\var'_{p} \Upsilon)(\sigma^{m_{p}+t}\omega)+	\sum_{t=0}^{n_k^{i+1}-1}(\var'_{{i+1}} \Upsilon)(\sigma^{m_{i+1}+t}\omega)\\&+m_1C+\Big |\sum_{p=1}^{i}(m_{p+1}-m_p-n^p_{N_p})C\Big |.
\end{split}
\end{equation*}
Also, since $\int _{\Omega} \var'_{p} \Upsilon(\omega)\ d\mathbb{P}\leq \varepsilon^3_{p}$, and $\sigma^{m_{p}}\omega\in\Omega(p)$ implies $\Big|S_{n^{p}_{N_p}}\var'_{p} \Upsilon(\sigma^{m_{p}}\omega)-n^{p}_{N_p}\int _{\Omega} \var'_{p} \Upsilon(\omega)\ d\mathbb{P}\Big|\leq n^{p}_{N_p}c_{n^{p}_{N_p}}\leq n^{p}_{N_p} \varepsilon^3_{p}$, we have 
\begin{eqnarray*}
	&&\sum_{t=0}^{n^{p}_{N_p}-1}(\var'_{p} \Upsilon)(\sigma^{m_{p}+t}\omega)=S_{n^{p}_{N_p}}\var'_{p} \Upsilon(\sigma^{m_{p}}\omega)\\
	&\leq&\Big|S_{n^{p}_{N_p}}\var'_{p} \Upsilon(\sigma^{m_{p}}\omega)-n^{p}_{N_p}\int _{\Omega} \var'_{p} \Upsilon(\omega)\ d\mathbb{P}\Big|+n^{p}_{N_p}\int _{\Omega} \var'_{p} \Upsilon(\omega)\ d\mathbb{P}\\
	&\leq & 2n^{p}_{N_p}\varepsilon^3_{p}.
\end{eqnarray*}

One estimates  the term invoking $\var'_{{i+1}} \Upsilon$ similarly.  Then, recalling that $m_{p+1}-m_p-n^p_{N_p}=M(p+1)+ \theta(s+1,\omega,s_{p+1})-m_{p}-n^p_{N_p},$
and by construction 	$M(p+1)\le \varepsilon_{p+1}^3 n^{p}_{N^{p}}$ and $\theta(p+1,\omega,s_{p+1})-m_{p}-n^p_{N_p}\le 
\varepsilon_{p}^3 n^{p}_{N^{p}}$, we get 
\begin{equation}\label{6-32}
\begin{split}
&\Big |S_{m_{i+1}+n^{{i+1}}_k} \Upsilon(\omega,\vl)-\sum_{p=1}^{i}S_{n^{p}_{N_p}} \Upsilon_{p}(F^{m_{{p}}}
(\omega,\vl))-S_{n^{{i+1}}_k} \Upsilon_{{i+1}}(F^{m_{i+1}}(\omega,\vl))\Big |  \\
\leq&\sum_{p=1}^{i}n^p_{N_p}\varepsilon^3_{p}+  n^{{i+1}}_k \varepsilon^3_{i+1}+ m_1C+C\sum_{p=1}^{i}n^p_{N_p}(\varepsilon_{p}^3+\varepsilon_{p+1}^3),\\
\leq& \sum_{p=1}^{i}n^p_{N_p}((1+2C)\varepsilon^3_{p})+n^{{i+1}}_k\varepsilon^3_{i+1}+m_1C\\
\leq& \sum_{p=1}^{i-1}n^p_{N_p}((1+2C))+n^i_{N_i}((1+2C)\varepsilon^3_{i})+n^{{i+1}}_k\varepsilon^3_{i+1}+m_1C\\
\leq& m_i (1+2C)+n^i_{N_i}((1+2C)\varepsilon^3_{i})+n^{{i+1}}_k\varepsilon^3_{i+1}+m_1C\\
\leq& m_i (1+2C)+m_{i+1}((1+2C)\varepsilon^3_{i})+n^{{i+1}}_k\varepsilon^3_{i+1}+m_1C\\
\leq& (m_{i+1}+n^{{i+1}}_k)(\varepsilon^2_{i})
\end{split}
\end{equation}
for $i$ and $k$ \text{ large enough},	where in the last inequality we have used the fact that $m_i\leq m_{i+1}\varepsilon^3_{i+1}$ and $\varepsilon_{i}\geq\varepsilon_{i+1}>0$.

\medskip

{\bf Second estimate.} For $i\in\mathbb{N}$ large enough, for any $k$ with ${\mathcal{N}_{i+1}}< k \leq N_{i+1}$ for any  $v,v'\in \Sigma_{\omega,m_{i+1}+n^{i+1}_k}$ satisfying $|v\wedge v'|\geq m_{i+1}+n^{{i+1}}_{k-1}$, 
\begin{equation}\label{control neighbor}
\frac{|U_{\omega}^v|}{|U_{\omega}^{v'}|}\leq \exp((m_{i+1}+n^{{i+1}}_k) \varepsilon^2_{i}).
\end{equation}

Indeed, for $i$ large enough, we have
\begin{eqnarray*}
	&&\left|\log |U_{\omega}^v|-\log |U_{\omega}^{v'}|\right|\\
	&\leq& 2 V_{m_{i+1}+M(i+1)+n^i_{k}}\Psi(\omega)+ 2(n^{i+1}_{k}-n^{i+1}_{k-1})C\\
	&\le & 2 V_{m_{i+1}+M(i+1)+n^i_{k}}\Psi(\omega)+ 2C n_{k}^{i+1}\varepsilon_{k+1}^3\\
	&\leq& 2\sum_{i=0}^{m_1-1}\|\Psi_{\sigma^i\omega}\|_{\infty}+2\sum_{j=1}^{i}(m_{j+1}-m_j)\varepsilon_j^3+2Cn^{{i+1}}_{k}\varepsilon_{i+1}^3\\
	&\le &2\sum_{i=0}^{m_1-1}\|\Psi_{\sigma^i\omega}\|_{\infty}+2 m_i+2m_{i+1}\varepsilon_i^3+2Cn^{{i+1}}_{k}\varepsilon_{i+1}^3\\
	&\leq& 2\sum_{i=0}^{m_1-1}\|\Psi_{\sigma^i\omega}\|_{\infty}+4m_{i+1}\varepsilon_i^3+n^{{i+1}}_{k}\varepsilon_{i+1}^2;
\end{eqnarray*} 
also $2\sum_{i=0}^{m_1-1}\|\Psi_{\sigma^i\omega}\|_{\infty}\leq m_{i+1}\varepsilon_i^3$.

At last we get 
$\left|\log |U_{\omega}^v|-\log |U_{\omega}^{v'}|\right|\leq (m_{i+1}+n^{{i+1}}_k) \varepsilon^2_{i}$ for $i$ large enough. Then \eqref{control neighbor} follows.

\medskip

We can now estimate the local dimension of $\mu_\omega$. Fix $x\in K(\omega,\{d_i\}_{i\geq 1})$.  If $r$ is small enough, we can choose the largest $i$, then the largest $k=k_{i+1}$, with ${\mathcal{N}_{i+1}}< k \leq N_{i+1}$, such that the following property holds:
there exists $w\in R_{i}(d_1,d_2,\cdots, d_i),\ v\in \mathcal V(\sigma^{m_{i+1}}\omega,i+1,q_{i+1},k)$ satisfying
$x\in U_{\omega}^{w\ast v}$ and
\begin{equation*}\label{}
|U_{\omega}^{w\ast v}|\geq 2r\exp((m_{i+1}+n^{{i+1}}_k) \varepsilon^2_{i}).
\end{equation*}
From the construction, if $U_{\omega}^{w\ast v+}$ and $U_{\omega}^{w\ast v-}$ are the neighboring intervals of $U_{\omega}^{w\ast v}$, then $|v\wedge v+|$, $|v\wedge v-|$ are larger than $n^{i+1}_{k-1}$.
Then by  \eqref{control neighbor} we have $|U_{\omega}^{w\ast v+}|\geq 2r $ and $|U_{\omega}^{w\ast v-}|\geq 2r$.
So there exists $v'=v-$ or $v'=v+$ such that
$B(x,r)\subset U_{\omega}^{w\ast v}\cup U_{\omega}^{w\ast v'}.$

Now, using estimates similar to those leading to \eqref{control neighbor} with $\Psi$ replaced by $\Phi$ we get that for any $\underline{w\ast v}\in [w\ast v]_{\omega}$,
\begin{equation*}\label{}
\begin{split}
\mu_{\omega}(B(x,r)) \leq & \mu_{\omega}(U_{\omega}^{w\ast v})+\mu_{\omega}(U_{\omega}^{w\ast v'}) \\
\leq& 2\exp(S_{m_{i+1}+n^{i+1}_{k}}\Phi(\omega, \underline{w\ast v})+(m_{i+1}+n^{i+1}_{k})
\varepsilon^2_{i+1}).
\end{split}
\end{equation*}
Consequently,  using  \eqref{6-32}, 
\begin{eqnarray*}
	&&\log \mu_{\omega}(B(x,r))\\
	&\leq&\log 2+\sum_{p=1}^{i}S_{n^{p}_{N_p}}\Phi_{p}(F^{m_{{p}}}
	(\omega,\vl))+S_{n^{{i+1}}_k}\Phi_{{i+1}}(F^{m_{i+1}}(\omega,\vl))\\
	&&+2(m_{i+1}+n^{{i+1}}_k)\varepsilon^2_{i}\\
	&\leq& \sum_{p=1}^{i}S_{n^{p}_{N_p}}\Phi_{p}(F^{m_{{p}}}
	(\omega,\vl))+S_{n^{{i+1}}_k}\Phi_{{i+1}}(F^{m_{i+1}}(\omega,\vl))+3(m_{i+1}+n^{{i+1}}_k)\varepsilon^2_{i}	\end{eqnarray*}
Let $\mathcal{I}^{\Phi}_{p}=S_{n^{p}_{N_p}}\Phi_{p}(F^{m_{{p}}}(\omega,\vl))$ and $\mathcal {I}^{\Phi}_{i+1,k}=S_{n^{{i+1}}_k}\Phi_{{i+1}}(F^{m_{i+1}}(\omega,\vl))$. Then 
\begin{equation}\label{control measure B(x,r)}
\log \mu_{\omega}(B(x,r))\leq \left(\sum_{p=1}^{i}\mathcal{I}^{\Phi}_{p}\right )+ \mathcal{I}^{\Phi}_{i+1,k}+3(m_{i+1}+n^{i+1}_{k})
\varepsilon^2_{i}.
\end{equation}

Now let us estimate  $\log r$ from below:

\textbf{If $k< N_{i+1}$}, there exists $\widetilde v$ such that $|w\ast \widetilde{v}|= m_{i+1}+n^{i+1}_{k+1}$,  $x\in U_{\omega}^{w\ast\widetilde{v}}$ and 
\begin{equation}\label{6.11}
\begin{split}
|U_{\omega}^{w\ast\widetilde{v}}| \leq & 2r\exp((m_{i+1}+n^{{i+1}}_{k+1}) \varepsilon^2_{i}).
\end{split}
\end{equation}
Observe that $n^{i+1}_{k+1}-n^{i+1}_{k}\leq n^{i+1}_{k}\varepsilon^3_{i+1}$ and $\mathcal{I}^{\Psi}_{i+1,k+1}-\mathcal{I}^{\Psi}_{i+1,k}\leq -C n^{i+1}_{k}\varepsilon^3_{i+1}$ (where $I^\Psi_p$ is defined similarly as $I^\Phi_p$), so from \eqref{6.11} and  \eqref{6-32} we can get 
\begin{eqnarray*}			
	\log r&\geq& \left( \sum_{p=1}^{i}\mathcal{I}^{\Psi}_{p}\right )+ \mathcal{I}^{\Psi}_{i+1,k+1}-2(m_{i+1}+n^{i+1}_{k+1})
	\varepsilon^2_{i}-\log 2\\	
	&\geq&\left (\sum_{p=1}^{i}\mathcal{I}^{\Psi}_{p}\right )+ \mathcal{I}^{\Psi}_{i+1,k}-C n^{i+1}_{k}\varepsilon^3_{i+1}-2(m_{i+1}+n^{i+1}_{k})\varepsilon^2_{i}-\log 2\\
	&\geq&\left (\sum_{p=1}^{i}\mathcal{I}^{\Psi}_{p}\right )+ \mathcal{I}^{\Psi}_{i+1,k}-3(m_{i+1}+n^{i+1}_{k})\varepsilon^2_{i}.
\end{eqnarray*} 
Consequently,	
\begin{equation}\label{control r}
\log r\geq \left (\sum_{p=1}^{i}\mathcal{I}^{\Psi}_{p}\right )+ \mathcal{I}^{\Psi}_{i+1,k}-3(m_{i+1}+n^{i+1}_{k})\varepsilon^2_{i}.
\end{equation}

\textbf{If $k=N_{i+1}$}, there exists $\widetilde v$ such that $|w\ast \widetilde{v}|= m_{i+2}+n^{i+2}_{\mathcal{N}_{i+2}+1}$, $x\in U_{\omega}^{w\ast\widetilde{v}}$ and 
\begin{equation}\label{}
\begin{split}
|U_{\omega}^{w\ast\widetilde{v}}| \leq& 2r\exp((m_{i+2}+n^{{i+2}}_{\mathcal{N}_{i+2}+1})\varepsilon^2_{i+1}).
\end{split}
\end{equation}
We have 
\begin{eqnarray*}
	\log r&\geq& \left (\sum_{p=1}^{i+1}\mathcal{I}^{\Psi}_{p}\right )+ \mathcal{I}^{\Psi}_{i+2,\mathcal{N}_{i+2}+1}-2(m_{i+2}+n^{i+2}_{\mathcal{N}_{i+2}+1})\varepsilon^2_{i}-\log 2\\
	&\geq& \left (\sum_{p=1}^{i}\mathcal{I}^{\Psi}_{p}\right )+ \mathcal{I}^{\Psi}_{i+1,N_{i+1}}-3(m_{i+1}+n^{i+1}_{{N}_{i+1}})\varepsilon^2_{i},
\end{eqnarray*} 
where we have used \eqref{control22}.  This implies that \eqref{control r} holds as well.

Finally, for any $\vl\in (w\ast \widetilde{v})_{\omega}$, \eqref{control measure B(x,r)} and \eqref{control r} imply 	 
\begin{equation}\label{6.13}
\frac{\log \mu_{\omega}(B(x,r))}{\log r} \geq\frac{(\sum_{p=1}^{i}\mathcal{I}^{\Phi}_{p})+ \mathcal{I}^{\Phi}_{i+1,k}+3(m_{i+1}+n^{i+1}_{k})
	\varepsilon^2_{i}}{(\sum_{p=1}^{i}\mathcal{I}^{\Psi}_{p})+\mathcal{I}^{\Psi}_{i+1,k}-3(m_{i+1}+n^{i+1}_{k})
	\varepsilon^2_{i}}
\end{equation}
Due to the construction and see \eqref{TI'2}, we have
$|\frac{\mathcal{I}^{\Phi}_{p}}{\mathcal{I}^{\Psi}_{p}}-d_i|\leq \varepsilon_{i}$
and $|\frac{\mathcal{I}^{\Psi}_{i,k}}{\mathcal{I}^{\Psi}_{i,k}}-d|\leq \varepsilon_{i+1}$ for $k\geq \mathcal{N}_{i+1}$.
It follows from Stolz-Ces\`{a}ro theorem that
\begin{equation}\label{27}
\liminf_{r\to 0}\frac{\log(\mu_{\omega}(B(x,r)))}{\log r}\geq d.
\end{equation}

It remains to prove that $\limsup_{r\to 0}\frac{\log \mu_{\omega}(B(x,r))}{\log r}\leq d$. This is  easier than for the $\liminf$ since we just have to choose the smallest $i$ and then the smallest $k=k_{i+1}$ with ${\mathcal{N}_{i+1}}\leq k \leq N_{i+1}$,  such that  there exists $w\in R_i(d_1,d_2,\cdots,d_i)$ and $v\in \mathcal V(\sigma^{m_{i+1}}\omega,i+1,q_{i+1},k)$ for which  $x\in U_{\omega}^{w\ast v}$ and $|U_{\omega}^{w\ast v}|\leq r$.
Then $$\mu_{\omega}(B(x,r))\geq \mu_{\omega}(U_{\omega}^{w\ast v})\geq \exp((\sum_{p=1}^{i}\mathcal{I}^{\Phi}_{p})+ \mathcal{I}^{\Phi}_{i+1,k}-2(m_{i+1}+n^{i+1}_{k})
\varepsilon^2_{i}).$$

\medskip

If $k>\mathcal{N}_{i+1}$, then $\widetilde v$, the father  of $v$, belongs to $ \mathcal V(\sigma^{m_{i+1}}\omega,i+1,d_{i+1},k_{i+1}-1)$ and $x\in U_{\omega}^{w\ast \widetilde{v}}$. We have  $|U_{\omega}^{w\ast \widetilde{v}}|\geq r$, so
\begin{eqnarray*}
	\log r&\leq& \log |U_{\omega}^{w\ast \widetilde{v}}|\leq (\sum_{p=1}^{i}\mathcal{I}^{\Psi}_{p})+\mathcal{I}^{\Psi}_{i+1,k-1}+2(m_{i+1}+n^{i+1}_{k})
	\varepsilon^2_{i}\\
	&\leq& (\sum_{p=1}^{i}\mathcal{I}^{\Psi}_{p})+\mathcal{I}^{\Psi}_{i+1,k}+3(m_{i+1}+n^{i+1}_{k})
	\varepsilon^2_{i}.
\end{eqnarray*}

If $k=\mathcal{N}_{i+1}$, then there exists $w'\in R_{i-1}(d_1,d_2,\cdots,d_{i-1})$, and
$v'\in \mathcal{V}(\sigma^{m_{i}}\omega,i,d_{i},N_{i})$ with
$x\in U_{\omega}^{w'\ast v'}$ and $|U_{\omega}^{w'\ast v'}|\geq r$, so
\begin{eqnarray*}
	\log r&\leq& \log |U_{\omega}^{w'\ast v'}|\leq (\sum_{p=1}^{i}\mathcal{I}^{\Psi}_{p})+2m_{i+1}
	\varepsilon^2_{i}\\
	&\leq& (\sum_{p=1}^{i}\mathcal{I}^{\Psi}_{p})+\mathcal{I}^{\Psi}_{i+1,k}+3(m_{i+1}+n^{i+1}_{k})
	\varepsilon^2_{i}.
\end{eqnarray*}
In both cases we have 
\begin{equation*}
\log r\leq (\sum_{p=1}^{i}\mathcal{I}^{\Psi}_{p})+\mathcal{I}^{\Psi}_{i+1,k}+3(m_{i+1}+n^{i+1}_{k})
\varepsilon^2_{i}.
\end{equation*}
Finally we have 
\begin{equation}\label{28}
\frac{\log(\mu_{\omega}(B(x,r)))}{\log r}\leq \frac{(\sum_{p=1}^{i}\mathcal{I}^{\Phi}_{p})+ \mathcal{I}^{\Phi}_{i+1,k}-2(m_{i+1}+n^{i+1}_{k})
	\varepsilon^2_{i}}{(\sum_{p=1}^{i}\mathcal{I}^{\Psi}_{p})+\mathcal{I}^{\Psi}_{i+1,k}+3(m_{i+1}+n^{i+1}_{k})\varepsilon^2_{i}},
\end{equation}
which yields  
$$\limsup_{r\to 0}\frac{\log(\mu_{\omega}(B(x,r)))}{\log r}\leq d.$$
The inclusion $K(\omega,\{d_i\}_{i\geq 1})\subset E(\mu_\omega,d)$ is established.

Next we estimate $\dim_H K(\omega,\{d_i\}_{i\geq 1})$ from below.
For any  $v=w_0\ast v'\in R_1(d_1)$, with  $\  v'\in \mathcal{V}(\sigma^{{m_1}}\omega,1,q_1)$,  define
\begin{equation}\label{}
\eta_{\omega}(U_{\omega}^{w_0\ast v'}):=\frac{\mu^{\Lambda_{1,q_1}}_{\sigma^{m_1}\omega}(U_{\sigma^{m_1}\omega}^{v'})}{\sum_{v''\in  \mathcal{V}(\sigma^{m_1}\omega,1,q_1)}\mu^{\Lambda_{1,q}}_{\sigma^{m_1}\omega}(U_{\sigma^{m_1}\omega}^{v''})}.
\end{equation}
Then inductively, for any  $w\in R_i(d_1,d_2,\cdots,d_i)$ and $ v\in \mathcal{V}(\sigma^{{m_{i+1}}}\omega,i+1,q_{i+1})$, so that $w\ast v \in R_{i+1}(d_1,d_2,\cdots,d_i,d_{i+1})$,  define:
\begin{equation}\label{}
\eta_{\omega}(U_{\omega}^{w\ast v}):=\eta_{\omega}(U_{\omega}^w) \frac{\mu^{\Lambda_{{i+1},q_{i+1}}}_{\sigma^{m_{i+1}}\omega}(U_{\sigma^{m_{i+1}}\omega}^v)}{\sum_{v'\in \mathcal V_{i+1}}
	\mu^{\Lambda_{{i+1},q_{i+1}}}_{\sigma^{m_{i+1}}\omega}(U_{\sigma^{m_{i+1}}\omega}^{v'})}.
\end{equation}
We can extend $\eta_{\omega}$ in a unique way to a probability measure on the  $\sigma$-algebra  generated by $\cup_{i\geq 1}\{ U_{\omega}^v:v\in R_{i}(d_1,d_2,\cdots, d_{i})\}$. This measure is supported on  $K(\omega,\{d_i\}_{i\geq 1})$. 

Since for each $i\ge 1$ we have $\sum_{v'\in \mathcal V_{i}}
\mu^{\Lambda_{{i},q_{i}}}_{\sigma^{m_{i}}\omega}(U_{\sigma^{m_{i}}\omega}^{v'})\ge \mu^{\Lambda_{{i},q_{i}}}_{\sigma^{m_{i}}\omega}(E_{i,q_i})\ge 1-\epsilon_i$ and $\prod_{i=1}^{\infty}(1-\epsilon_i)\geq \frac{1}{2}$, using the same approach as the proof of \eqref{27}, we can get that  for any $x\in K(\omega,\{d_i\}_{i\geq 1})$,
\begin{equation}\label{6.18}
\liminf_{r\to 0}\frac{\log(\eta_{\omega}(B(x,r)))}{\log r} \geq  \liminf_{i\to \infty}T^*(d_i)
= T^*(d).
\end{equation}
This yields  $\dim_H(E(\mu_{\omega},d))\geq T^\ast(d)$.

\begin{remark}\label{control local}In fact, if in the construction the  sequence $(m_i)_{i\in\N}$ is replaced by another one growing faster (with the effect to modify $(K(\omega,\{d_i\}_{i\in\N}),\eta_\omega)$) (for example $m_{i-1}\leq m_i\varepsilon^3_{i}$),  for all $x\in K(\omega,\{d_i\}_{i\in\N})$, 
	\begin{equation}\label{32}
	\liminf_{r\to 0}\frac{\log\eta_\omega(B(x,r))}{\log r}=\liminf_{i\to\infty}T^*(d_i).
	\end{equation}
	\begin{equation}\label{33}
	\limsup_{r\to 0}\frac{\log\eta_\omega(B(x,r))}{\log r}=\limsup_{i\to\infty}T^*(d_i).
	\end{equation}
	This property will be used in the next subsection.
\end{remark}
\begin{proof}[Proof of \eqref{32} and \eqref{33}] We have 
	\begin{eqnarray*}
		&&\liminf_{r\to 0}\frac{\log\eta_\omega(B(x,r))}{\log r}\\
		&\geq&\liminf_{i\to\infty}\frac{(\sum_{p=1}^{i}\mathcal{I}^{\Lambda_{{p},q_{p}}}_{p})+ \mathcal{I}^{\Lambda_{{i+1},q_{i+1}}}_{i+1,k}+3(m_{i+1}+n^{i+1}_{k})
			\varepsilon^2_{i}}{(\sum_{p=1}^{i}\mathcal{I}^{\Psi}_{p})+\mathcal{I}^{\Psi}_{i+1,k+1}-3(m_{i+1}+n^{i+1}_{k})
			\varepsilon^2_{i}}\geq\liminf_{i\to \infty}T^{*}(d_i),
	\end{eqnarray*}
	where for the first inequality we have used the same estimates as to get \eqref{6.13}. Also, 		
	\begin{eqnarray*}
		&&\liminf_{r\to 0}\frac{\log\eta_\omega(B(x,r))}{\log r}\\
		&\leq&\liminf_{i\to \infty}\frac{\log\eta_\omega(B(x,|U_{\omega}^v|))}{\log |U_{\omega}^v|} \text{ (here } v\in R_i(d_1,\cdots,d_i) \text{ and } x\in U_{\omega}^v\text{)}\\
		&\leq&\liminf_{i\to\infty}\frac{(\sum_{p=1}^{i}\mathcal{I}^{\Lambda_{{p},q_{p}}}_{p})+3(m_{i}+n^{i}_{N_i})
			\varepsilon^2_{i}}{(\sum_{p=1}^{i}\mathcal{I}^{\Psi}_{p})-3(m_{i}+n^{i}_{N_i})
			\varepsilon^2_{i}}
		=\liminf_{i\to \infty}T^{*}(d_i),
	\end{eqnarray*}
	if we assume that $m_{i-1}\leq m_i\varepsilon^3_{i}$. Thus equation \eqref{32} is established. To get \eqref{33}, observe that 
	\begin{eqnarray*}
		&&\limsup_{r\to 0}\frac{\log\eta_\omega(B(x,r))}{\log r}\\
		&\geq&\limsup_{i\to\infty}\frac{\log\eta_\omega(B(x,\frac{1}{2}|U_{\omega}^{v}|\exp(-(m_i+n_{N_i}^{i})\epsilon_{i-1}^2)))}{\log |U_{\omega}^{v}|-(m_i+n_{N_i}^{i})\epsilon_{i-1}^2-\log 2}\\
		&&\text{ (here } v\in R_i(d_1,\cdots,d_i) \text{ and } x\in U_{\omega}^v\text{)}\\
		&\geq&\limsup_{i\to\infty}\frac{(\sum_{p=1}^{i}\mathcal{I}^{\Lambda_{{p},q_{p}}}_{p})+3(m_{i}+n^{i}_{k})
			\varepsilon^2_{i-1}}{(\sum_{p=1}^{i}\mathcal{I}^{\Psi}_{p})-3(m_{i}+n^{i}_{k})
			\varepsilon^2_{i-1}}
		=\limsup_{i\to \infty}T^{*}(d_i),
	\end{eqnarray*}
	if we assume that $m_{i-1}\leq m_i\varepsilon^3_{i}$. On the other hand,
	\begin{eqnarray*}
		&&\limsup_{r\to 0}\frac{\log\eta_\omega(B(x,r))}{\log r}\\
		&\leq&\limsup_{i\to \infty}\frac{(\sum_{p=1}^{i}\mathcal{I}^{\Lambda_{{p},q_{p}}}_{p})+ \mathcal{I}^{\Lambda_{{i+1},q_{i+1}}}_{i+1,k}+3(m_{i+1}+n^{i+1}_{k})
			\varepsilon^2_{i}}{(\sum_{p=1}^{i}\mathcal{I}^{\Psi}_{p})+\mathcal{I}^{\Psi}_{i+1,k+1}-3(m_{i+1}+n^{i+1}_{k})
			\varepsilon^2_{i}}
		\leq\limsup_{i\to \infty}T^{*}(d_i).
	\end{eqnarray*}
\end{proof}

\subsection{Proofs of theorem~\ref{multifractal initial}(3)(4) and theorem~\ref{multifractal initial2}}
We first notice that theorem~\ref{multifractal initial}(4) follows from theorem~\ref{multifractal initial}(3) and  proposition 1.3 (2) in \cite{Barral2015}.

\begin{proof}[Proof of theorem~\ref{multifractal initial}(3)]
	
	We first deal with the lower bounds for the dimensions.
	
	At first, for the  Hausdorff dimension let us take two sequences $(d_i)_{i\ge 1}$ and $(d'_i)_{i\ge 1}$ in $\prod_{i\ge 1}D_i$ such that $\lim_{i\to \infty} d_i=d$ and $\lim_{i\to\infty} d'_i=d'$, with the properties:
	$$\lim_{i\to \infty} T^{*}(d_i)= T^{*}(d),\ \lim_{i\to \infty} T^{*}(d'_i)= T^{*}(d').$$
	Set  $\tilde{d}_{2i}=d_i$ and $\tilde{d}_{2i+1}=d'_i$.
	
	We can use the same construction as in the previous subsection and get a set $K(\omega,\{\tilde{d}_i\}_{i\geq 1})\subset E(\mu_{\omega},d,d')$, as well as a probability  measure $\eta_\omega$ supported on $K(\omega,\{\tilde{d}_i\}_{i\geq 1})$.
	
	According to the  proof of remark~\ref{control local}, one can also ensure that  that for all $x\in K(\omega,\{\tilde{d}_i\}_{i\geq 1})$ one has $\liminf_{r\to 0}\frac{\log(\eta_{\omega}(B(x,r)))}{\log r}= \inf\{T^\ast(d),T^\ast(d')\}$. Consequently we get $\dim_H E(\mu_{\omega},d,d')\geq\inf\{T^*(d),T^{*}(d')\}$ by definition of $\dim_H(\eta_{\omega})$. 
	
	For the packing dimension, we choose three sequences $(d_i)_{i\ge 1}$, $(d'_i)_{i\ge 1}$  and $(d''_i)_{i\ge 1}$ in $\prod_{i\ge 1}D_i$ in such a way that $\lim_{i\to \infty} d_i=d$, $\lim_{i\to\infty} d'_i=d'$, $\lim_{i\to\infty} d''_i=d''$, $\lim_{i\to \infty} T^{*}(d_i)= T^{*}(d),\ \lim_{i\to \infty} T^{*}(d'_i)= T^{*}(d'),$
	and
	$\ \lim_{i\to \infty} T^{*}(d''_i)= T^{*}(d'')=\sup\{T^*(\beta):\beta\in [d,d']\}.$ Then we take $\tilde{d}_{3i}=d_i$, $\tilde{d}_{3i+1}=d''_i$ and $\tilde{d}_{3i+2}=d'_i$.
	
	Here again, we get $K(\omega,\{\tilde{d}_i\}_{i\geq 1})\subset E(\mu_{\omega},d,d')$ and the measure $\eta_\omega$, which satisfies $\limsup_{r\to 0}\frac{\log(\eta_{\omega}(B(x,r)))}{\log r}= \sup\{T^*(d),T^{*}(d'),T^{*}(d'')\}=T^{*}(d'')$. Consequently we get $$\dim_P E(\mu_{\omega},d,d')\geq T^{*}(d'')$$ by definition of $\dim_P(\eta_{\omega})$. 
	
	\medskip		
	
	The upper bound of the dimensions   directly come from   proposition 1.3(1) and (1.4) and (1.5) in~\cite{Barral2015}. 
\end{proof}

\begin{proof}[Proof of theorem~\ref{multifractal initial2}] The properties $\mathcal H^g(E(\mu_\omega,d))=0$ if $\limsup_{r\to 0^+}\frac{\log(g(r))}{\log(r)}>T'(d)$ and $\mathcal P^g(E(\mu_\omega,d))=0$ if $\liminf_{r\to 0^+}\frac{\log(g(r))}{\log(r)}>T'(d)$ follow from standard estimates. 
	
	Suppose $d\in [T'(+\infty),T'(-\infty)]$ and $T^*(d)<\max (T^*)$. The estimates used in the proof of \eqref{27} and the proof of remark~\ref{control local} can be used to get a positive sequence $(\varepsilon_i')_{i\in\N}$ decreasing to 0 and a constant $C'>0$ such that, independently on $\{d_i\}_{i\in\N}\in \prod_{i\ge 1} D_i$, for all $x\in K(\omega,\{d_i\}_{i\in\N})$, for $i$ large enough, if $ \exp(-m_{i+1}c_{\Psi}/2)< r$, then 
	\begin{equation}\label{vgc}
	\eta_\omega(B(x,r))\le C' r^{\min \{T^*(d_j)-\varepsilon'_j:1\le j\le i\}}.
	\end{equation} 
	Since $T^*(d)<t_0$, we can find $\{d_i\}_{i\in\mathbb{N}}\in\prod_{i\in\mathbb{N}}D_i$ such that $d_i\to d$ as $i\to \infty$,  $T^*(d_i)-\varepsilon'_i\geq T^*(d)+\varepsilon'_i$  for $i$ large enough,  $T^*(d_i)\to T^*(d)$ as $i\to \infty$, and $T^*(d_i)-\varepsilon'_i$ is ultimately non increasing. 
	
	For any gauge function $g$ such that $\limsup_{r\to 0}\frac{\log g(r)}{\log r}\leq T^*(d)$, there exists a positive sequence $\{\upsilon_r\}_{r>0}$ such that  both $\upsilon_r$ and $r^{\upsilon_r}$ decrease to~0 as $r$ decreases to~0 and $$ g(r)\geq r^{T^*(d)+\upsilon_r}\quad (r\le 1).$$
	Due to \eqref{vgc} for $i$ large enough, for any $r$ such that $ \exp(-m_{i+1}c_{\Psi}/2)\leq r\le \exp(-m_{i}c_{\Psi}/2)$,  for any $x\in K(\omega,\{d_i\}_{i\geq 1})$, 
	$$\eta_\omega(B(x,r))\leq C' r^{\min\{T^*(d_j)-\varepsilon'_j:1\le j\le i\}}\leq C' r^{T^*(d_i)-\varepsilon'_i}\le C'r^{T^*(d)+\varepsilon'_i}.$$
	
	Notice that  $g(r)r^{\upsilon_r}\geq r^{T^*(d)+2\upsilon_r}$. So, if we can impose  $2\upsilon_r\leq \varepsilon'_i$, we will have 
	$$\eta_\omega(B(x,r))\leq C' g(r)r^{\upsilon_r}$$
	hence 
	$$g(r)\geq {C'}^{-1}\eta_{\omega}(B(x,r))r^{-\upsilon_r}.$$ 
	Then, for any positive real number $\delta>0$, this will yield  $\mathcal{H}_{\delta}^{g}(K(\omega,\{d_i\}_{i\geq 1}))\geq {\delta}^{-\upsilon_{\delta}}$, and  letting  $\delta\to 0$,   $\mathcal{H}^{g}(K(\omega,\{d_i\}_{i\geq 1}))=+\infty$ so $\mathcal{H}^{g}(E(\mu_{\omega},d))=+\infty$.
	
	Now,  if we choose $m_i$ large enough so  that 
	$$\upsilon_{\exp(-m_{i}c_{\Psi}/2)}\leq \varepsilon_i'/2,$$
	then for $ \exp(-m_{i+1}c_{\Psi}/2)\leq r\le \exp(-m_{i}c_{\Psi}/2)$, we have $2\upsilon_r\leq \varepsilon'_i$ since $\upsilon_r\leq\upsilon_{\exp(-m_{i}c_{\Psi}/2)}$.
	
	\medskip	
	
	Finally, suppose that $g$ is a gauge function such that $\liminf_{r\to 0}\frac{\log g(r)}{\log r}\leq T^*(d)$. There exist $\{r_j\}_{i\in \mathbb{N}}\in (0,1)^\N$, and $\{\upsilon_{r_j}\}_{j\in\N}\in(0,\infty)^\N$ such that  $\upsilon_{r_j}\in(0,1]$ and $r_j^{\upsilon_j}$ decrease to 0 as $j$ tends to $\infty$,  and $$ g(2r_j)\geq r_j^{T^*(d)+\upsilon_{r_j}}.$$  
	
	Using the same approach as above, we can choose $(d_i)_{\ge 1}\in \prod_{i\ge 1}D_i$ such that $\lim_{i\to \infty} d_i=d$, $T^*(d_i)$ converges slowly to  $T^*(d)$ from above, and in the construction of  $(K(\omega,\{d_i\}_{i\geq 1}),\eta_\omega)$,   $m_i$ tends fast enough to $\infty$ so that,  for some $j_0\in\N$, for all $j\ge j_0$, for any $x\in K(\omega,\{d_i\}_{i\geq 1}$,
	$$\eta_{\omega}(B(x,r_j))\leq  C'(2r_j)^{T^*(d)+2\upsilon_{r_j}}.
	$$
	Now, let $ A\subset K(\omega,\{d_i\}_{i\geq 1})$ be of positive $\eta_\omega$-measure. 
	For any given $\delta>0$, take $j'_0\ge j_0$ such that $r_{j'_0}\leq \delta$ consider the following family of closed balls
	$$\mathcal{B}_k=\{B(x,r_{j}): x\in A,j\ge j'_0 \},$$
	which is a covering of $A$. Due to Besicovitch covering theorem, we can extract an at most countable subfamily of pairwise disjoint balls $\{B(x_i, \rho_i)\}_{i\in I}$ such that $\eta_\omega(\bigcup_{i\in I}B_i)>0$.  This family is a $\delta$-packing of $A$, and ($\mathcal{P}_{0,\delta}^{g}$ stands for the prepacking measure associated with $g$)
	\begin{eqnarray*}
		\mathcal{P}_{0,\delta}^{g}(A)&\geq & \sum_{i}g(B(x_i, \rho_i))\geq \sum_{i}\rho_i^{T^*(d)+\upsilon_{\rho_i}}\\
		&\geq& \sum_{s} \rho_i^{-\upsilon_{\rho_i}} \eta_{\omega}(B(x_i,\rho_i))
		\geq \rho_{j'_0}^{-\upsilon_{\rho_{j'_0}}}\eta_{\omega}(A).
	\end{eqnarray*}
	As $j'_0\to \infty$ when $\delta\to 0$, we can conclude that $P^{g}_{0}(A)=+\infty$. Then, since any at most countable covering of $K(\omega,\{d_i\}_{i\geq 1})$ must contain a set $A$ of positive $\eta_\omega$-measure, we finally  get $\mathcal{P}^g(K(\omega,\{d_i\}_{i\geq 1}))=+\infty$, so $\mathcal{P}^g(E(\mu_{\omega},d))=+\infty$.	
\end{proof}



\medskip
Received xxxx 20xx; revised xxxx 20xx.
\medskip

\end{document}